\numberwithin{equation}{section}
\newtheorem{theorem}{Theorem}[section]
\newtheorem{definition}[theorem]{Definition}
\newtheorem{lemma}[theorem]{Lemma}
\newtheorem{proposition}[theorem]{Proposition}
\newtheorem{corollary}[theorem]{Corollary}
\newtheorem{remark}[theorem]{Remark}
\newcommand{\A}{\mathcal{A}}
\newcommand{\T}{\mathcal{T}}
\newcommand{\F}{\mathcal{F}}
\def\N{\mathbb N}
\def\op{\mathrm{op}}
\def\S{\mathcal{S}}
\def\Ab{\mathrm{Ab}}
\def\coker{\mathrm{Coker}}
\def\id{\mathrm{id}}
\def\lmod#1{#1\text{-}\mathrm{Mod}}
\def\mod#1{\mathrm{Mod}\text{-}#1}
\newcommand{\C}{\mathcal{C}}
\newcommand{\fg}{\mathrm{fg}}
\newcommand{\Gen}{\Gen}
\newcommand{\Hom}{\mathrm{Hom}}
\newcommand{\Ker}{\mathrm{Ker}}
\renewcommand{\Im}{\mathrm{Im}}
\newcommand{\Add}{\mathrm{Add}}
\newcommand{\add}{\mathrm{add}}
\newcommand{\tr}{\mathrm{tr}}
\def\M{\mathcal M}
\newcommand{\keywords}[1]{\noindent\textbf{Key words and phrases: } #1}
\newcommand{\MSC}[1]{\noindent\textbf{2010 Mathematics Subject Classification. } #1}
\def\Q{\mathcal Q}
\def\Padd{\mathrm{P.Add}}
\def\I{\mathcal I}
\def\Ob{\mathrm{Ob}}
\def\Gen{\mathrm{Gen}}
\def\hom{\mathrm{Hom}}
\def\t{\mathbf{t}}
\def\X{\mathcal{X}}
\def\lim{\mathrm{lim}}
\def\proj{\mathrm{proj}}
\def\B{\mathcal{B}}
\def\Rej{\mathrm{Rej}}
\def\fpmod#1{\mathrm{mod}\text{-}#1}
\def\Y{\mathcal{Y}}
\def\E{\mathcal{E}}
\def\P{\mathcal{P}}
\title{Torsion pairs in categories of modules\\ over a preadditive category}
\author{Carlos Parra\footnote{The first named author was supported by CONICYT/FONDECYT/Iniciaci\'on/11160078} \and Manuel Saor\'{\i}n\footnote{The second named author was supported by the research projects from
the Ministerio de Econom\'{\i}a y Competitividad of Spain (MTM2016-77445-P) and the Fundaci\'on `S\'eneca' of
Murcia (19880/GERM/15), both with a part of FEDER funds.} \and Simone Virili\footnote{The third named author was supported by the Ministerio de Econom\'{\i}a y Competitividad of
Spain via a grant `Juan de la Cierva-formaci\'on'. He was also supported by the
Fundaci\'on `S\'eneca' of Murcia (19880/GERM/15) with a part of FEDER funds.}}
\begin{document}

\maketitle

\begin{abstract}
It is a result of Gabriel that hereditary torsion pairs in categories of modules are in bijection with certain filters of ideals of the base ring, called Gabriel filters or Gabriel topologies. A result of Jans shows that this bijection restricts to a correspondence between (Gabriel filters that are uniquely determined by) idempotent ideals and TTF triples. 
Over the years, these classical results have been extended in several different directions. In this paper we present a detailed and self-contained exposition of an extension of the above bijective correspondences to additive functor categories over small preadditive categories. In this context, we also show how to deduce parametrizations of hereditary torsion theories of finite type, Abelian recollements by functor categories, and centrally splitting TTFs.
\end{abstract}

\keywords Torsion pair, TTF triple, additive categories, Gabriel topology, Grothendieck topology, idempotent ideal, recollement. \\
\MSC  13D30, 16S90, 18E40, 18E05.


\section*{Introduction}
\addcontentsline{toc}{section}{Introduction}

{\bf Torsion theories} (also called {\bf torsion pairs}) were introduced by Dickson \cite{D} in the general setting of Abelian categories, taking as a model the classical theory of torsion Abelian groups. Given an Abelian category $\C$, a torsion pair $(\T,\F)$ in $\C$ is a pair of full subcategories satisfying the following axioms:
\begin{enumerate}[\rm ({Tors.}1)]
\item $\T={}^{\perp}\F$ (and $\F=\T^{\perp}$); where, for any class $\X$ of objects, we put \[
\X^{\perp}:=\{C \in \C:\C(X,C)=0, \text{ for all }X \in \X\}\quad\text{and}
\]
\[
{}^{\perp}\X:=\{C \in \C :\C(C,X)=0, \text{ for all }X \in \X\};
\] 
\item for each object $X$ of $\C$, there is an exact sequence
\begin{equation}\label{intro_standard_sequence}
\tag{\dag}\xymatrix{0 \ar[r] & T_X \ar[r] & X \ar[r] & F_X \ar[r] & 0,}
\end{equation}
with $T_X\in\T$ and $F_X\in\F$.
\end{enumerate}
Given a torsion pair ${\t}=(\T,\F)$, the class $\T$ (resp., $\F$) is said to be a {\bf torsion} (resp., {\bf torsionfree}) {\bf class}. Furthermore, ${\t}$ is said to be {\bf hereditary} if $\T$ is closed under taking subobjects (see, Sec.\ref{recall_torsion_subs}). (Hereditary) Torsion pairs have become a fundamental tool in the study of Grothendieck categories and their localizations; furthermore, they play an important role  in Algebraic Geometry and Representation Theory.

\medskip
Given a (unitary and associative) ring $R$, it is well-known since Gabriel's thesis \cite{Ga} that there is a one-to-one correspondence between {\bf Gabriel topologies} in $R$ (which are suitable filters of ideals of the ring) and hereditary torsion classes in $\mod R$ (see also \cite[Chapter VI]{S}). 
On the other hand, rings may be regarded as a special case of small {\bf preadditive categories} (i.e., small categories enriched over Abelian groups, see Sec.\ref{appendix_on_pre_additive}). Hence, the category of modules $\mod \A$ over a small preadditive category $\A$ naturally arises, see Sec.\ref{subs_modules}. There are many sources in the literature which deal with
this generalization (see, for example, \cite{mitchell}).
Given a preadditive category $\A$, the notion of ``linear Grothendieck topology'' on $\A$ (see Sec.\ref{grabriel_subs}), introduced in \cite{Lo,RG}, is an additive version of the notion of Grothendieck topology, which is of common use in Algebraic Geometry (see, for example, \cite{MM}). When applied to a preadditive category with just one object, one obtains the usual notion of Gabriel topology for rings. 

Our first general result uses these Grothendieck topologies to extend Gabriel's classical bijection:

\bigskip\noindent
{\bf Theorem A} (see Thm.\ref{thm.Gabriel bijection for small categories}){\bf .}
{\em 
Let $\A$ be a small preadditive category. Then there is an (explicit) one-to-one correspondence
between (linear) Grothendieck topologies on $\A$ and  hereditary torsion pairs in $\mod \A$.}

\bigskip
Let us remark that the above theorem could be deduced by the more general statement \cite[Prop.\,11.1.11]{Pr}, that applies to locally finitely generated Grothendieck categories; it appears without proof as Proposition 11.1.11 in \cite{Pr} (the proof follows the same lines of analogous results in \cite{G,Po}). Furthermore, Theorem A is in the same spirit of \cite[Prop.\,2.4 and 3.6]{AB}, where related characterizations of hereditary torsion classes are given in the more general setting of Grothendieck categories with a projective generator. For an implicit approach to the bijection of the theorem the reader is referred to \cite{Lo,L} (see also \cite[Sec.1.1]{RG}). 

\bigskip
Recall now that, in good enough Abelian categories (e.g., $\mod \A$ for a small preadditive category $\A$) a class $\T$ is a torsion class (resp., a torsionfree class) if and only if it is closed under taking quotients, extensions and coproducts (resp., subobjects, extensions and products). Hence, if we start with a hereditary torsion class $\T$ which is also closed under taking products, than $\T$ is both a torsion and a torsionfree class, for short, a {\bf TTF class}. In such case, a triple of the form $(^{\perp}\T,\T,\T^{\perp})$ is said to be a {\bf TTF triple} (see, Sec.\ref{recall_torsion_subs}); these objects have been introduced in categories of modules $\mod R$ over a ring $R$ by Jans \cite{Jans}, who showed that TTF triples are in bijection with idempotent ideals of $R$. Our second general result is to extend Jans' bijection from rings to small preadditive categories.

\bigskip\noindent
{\bf Theorem B} (see Thm.\ref{prop.direct proof}){\bf .}
{\em 
Let $\A$ be a small preadditive category. Then there is an (explicit) one-to-one correspondence
between idempotent ideals of $\A$ and TTF triples in $\mod \A$.}

\bigskip
After extending the Gabriel's and Jan's bijections to parametrize hereditary torsion pairs and TTF triples we concentrate on the following problem. {\bf Recollements} of Abelian categories are particularly nice decompositions of a given Abelian category by two other Abelian categories 
and, in nice enough situations (e.g., categories like $\mod \A$ for a preadditive category $\A$), they are known to be in bijection with TTF triples. In particular, given a TTF triple $(\C,\T,\F)$ in $\mod \A$, one can see $\mod \A$ as a recollement by the two Abelian categories $\T$ and $\C\cap \F$. We say that this is a {\bf recollement by categories of modules} if and only if both $\T$ and $\C\cap \F$ are equivalent to categories of module over some small preadditive categories. The following result extends some of the main results in \cite{PV}, characterizing those idempotent ideals of $\A$ that induce recollements by categories of modules:

\bigskip\noindent
{\bf Theorem C} (see Thm.\ref{main_thm_recollement}){\bf .}
{\em 
Let $\mathcal{A}$ be a small preadditive category. Then there are (explicit) one-to-one correspondences between:
\begin{enumerate}[\rm (1)]
\item equivalence classes of recollements of $\mod \A$ by categories of modules;
\item idempotent ideals of $\mathcal{A}$ that are  trace of sets of finitely generated projective modules;
\item the full subcategories of $\proj(\A)$ which are closed under coproducts and summands.
\end{enumerate}
Furthermore, up to replacing $\A$ by a Morita equivalent small preadditive category, the idempotent ideals in (2) are generated by a set of idempotent endomorphisms.
}

\bigskip
Finally, recall that a torsion pair $(\T,\F)$ is said to be {\bf split} if, for any object $X$, the canonical sequence \eqref{intro_standard_sequence} splits. Similarly, a TTF triple $(\C,\T,\F)$ is said to {\bf split} if both torsion pairs $(\C,\T)$ and $(\T,\F)$ split. Note that it might happen that only one of these torsion pair splits (see \cite{NS}), withouth the TTF triple being split. By a result of Jans \cite{Jans}, the bijection between TTF triples and idempotent ideals restricts to a second one between  central idempotents of a ring $R$ and splitting TTF triples in $\mod R$. As a last result, we extend this correspondence to small preadditive categories:

\bigskip\noindent
{\bf Corollary D} (see Sec.\ref{coro_central_id}){\bf .}
{\em 
Let $\A$ be a small preadditive category. Then there is an (explicit) one-to-one correspondence
between idempotents of the center $Z(\A)$ of $\mathcal{A}$ and split TTF triples in $\mod \A$.}


\section{Rings with several objects and their modules}

In this first section we recall some basic results and definitions about small preadditive categories (see Sec.\ref{appendix_on_pre_additive}). In particular, starting with a small preadditive category $\A$, we construct in a universal way a small additive and idempotent complete category $\widehat\A_{\oplus}$, called the Cauchy completion of $\A$.  In Sec.\ref{subs_modules} we introduce and study the category of modules $\mod \A$, showing in particular that two small preadditive categories have equivalent module categories if and only if they have equivalent Cauchy completions. 
In Sec.\,\ref{subs_traces_and_bimodules} we briefly recall the definition of trace of a class of modules on a given module. Furthermore, after recalling some basic properties of bimodules, we show how to define the trace on bimodules. In Sec.\ref{subs_locally_coh}, we obtain a characterization of when a category of modules over a small preadditive category $\A$ is locally coherent. We conclude by recalling in Sec.\ref{subs_centre} the notion of centre of a preadditive category.

\subsection{Preadditive categories and Cauchy completion} \label{appendix_on_pre_additive}

We denote by $\Padd$ (resp., $\Add$) the ($2$-)category of {\bf preadditive} (resp., {\bf additive}) {\bf categories}. For the rest of this subsection, $\A$ will denote a {\bf small} preadditive category. 


\smallskip
We define the {\bf additive closure} $\widehat \A$ of $\A$ as follows:
\begin{enumerate}[--]
\item $\Ob(\widehat\A):=\{(a_1,\dots,a_n): n\in\N,\, a_i\in \Ob(\A)\}$;
\item given $n$, $m\in\N$ and $a=(a_1,\dots,a_n)$, $b=(b_1,\dots,b_m)\in\Ob(\widehat{\A})$, 
\[
{\widehat{\A}}(a,b):=\{(r_{i,j}):r_{i,j}\colon a_i\to b_j, \text{ with } i=1,\dots,n,\, j=1,\dots,m\}.
\]
\item composition is given by the usual row-by-column multiplication of matrices.
\end{enumerate}
It is the well-known that $\widehat{\A}$ is a small additive category, where the coproduct of two objects $(a_1,\dots,a_n)$ and $(b_1,\dots,b_m)$ is given by $(a_1,\dots,a_n,b_1,\dots,b_m)$. Furthermore, the inclusion $\iota\colon \A\to \widehat{\A}$ such that $a\mapsto (a)$ is universal in a suitable sense (in particular $\A\cong \widehat{\A}$ if $\A$ was already additive):

\begin{lemma}\label{lemma_preadditive_additive closure}
Let $\A$ be a small preadditive category and $\B$ an additive category (not necessarily small), then $\iota\colon \A\to \widehat{\A}$ induces an equivalence of categories (here we are using the $2$-categorical structure of $\Padd$)
\[
-\circ\iota\colon \Add(\widehat{\A}, \B)\to \Padd(\A,\B).
\]
\end{lemma}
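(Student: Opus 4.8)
The plan is to construct an explicit quasi-inverse $\Phi\colon\Padd(\A,\B)\to\Add(\widehat\A,\B)$ to the restriction functor $-\circ\iota$, and then to check that $(-\circ\iota)\circ\Phi$ is the identity on the nose while $\Phi\circ(-\circ\iota)$ is naturally isomorphic to the identity. Since $\widehat\A$ and $\B$ are additive, every additive functor in sight automatically preserves finite biproducts, and I will use this repeatedly.

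To define $\Phi$ I would send an additive functor $F\colon\A\to\B$ to the functor $\widehat F$ given on objects by $\widehat F(a_1,\dots,a_n):=\bigoplus_{i=1}^nF(a_i)$ --- this makes sense because $\B$ has finite biproducts, the empty tuple going to a zero object of $\B$ --- and on a morphism $(r_{i,j})\colon(a_1,\dots,a_n)\to(b_1,\dots,b_m)$ of $\widehat\A$ to the unique morphism $\bigoplus_iF(a_i)\to\bigoplus_jF(b_j)$ whose $(i,j)$-th component, computed with the biproduct inclusions and projections of $\B$, is $F(r_{i,j})$; a natural transformation $\eta\colon F\Rightarrow F'$ is sent to $\widehat\eta$ with $\widehat\eta_{(a_1,\dots,a_n)}:=\bigoplus_{i=1}^n\eta_{a_i}$. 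I would then verify the routine points: $\widehat F$ preserves identities trivially, and it preserves composition because in the additive category $\B$ the composite of two morphisms presented as matrices over the biproduct components is computed by row-by-column multiplication, which matches the composition law in $\widehat\A$ once one uses additivity of $F$ to pass $F$ through finite sums; by construction $\widehat F$ carries the concatenation of two tuples to the biproduct of their images, so $\widehat F\in\Add(\widehat\A,\B)$. Naturality of $\widehat\eta$ and functoriality of $\Phi$ are checked in the same spirit.

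By the very definitions one has $\widehat F\circ\iota=F$ (on objects and on morphisms) and $\widehat\eta$ restricts to $\eta$ along $\iota$, so $(-\circ\iota)\circ\Phi=\id_{\Padd(\A,\B)}$. For the other composite, fix an additive functor $G\colon\widehat\A\to\B$; in $\widehat\A$ we have $(a_1,\dots,a_n)=\bigoplus_{i=1}^n\iota(a_i)$, and since $G$ preserves finite biproducts this yields a canonical comparison isomorphism
\[
\theta_{G,(a_1,\dots,a_n)}\colon\ \widehat{G\circ\iota}(a_1,\dots,a_n)=\bigoplus_{i=1}^nG(\iota(a_i))\ \xrightarrow{\ \sim\ }\ G(a_1,\dots,a_n).
\]
I would then show that $\theta_G=(\theta_{G,c})_{c\in\Ob(\widehat\A)}$ is a natural transformation $\widehat{G\circ\iota}\Rightarrow G$ and that it is natural in $G$; as each $\theta_{G,c}$ is an isomorphism, this produces the required natural isomorphism $\Phi\circ(-\circ\iota)\cong\id_{\Add(\widehat\A,\B)}$ and completes the argument.

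The hard part --- indeed the only non-formal part --- will be the naturality of $\theta_G$ against an arbitrary matrix morphism $(r_{i,j})$ of $\widehat\A$, together with the parallel verification that $\widehat F$ respects composition. Both reduce to the same bookkeeping: a formal array of morphisms of $\A$, composed by matrix multiplication in $\widehat\A$, is carried by $F$ (resp.\ by $G$) to an honest matrix of morphisms between finite biproducts of $\B$, composed in the same way. This is routine once one invokes the universal property of biproducts --- a morphism out of a finite biproduct is determined by its restrictions along the inclusions, and a morphism into one by its composites with the projections --- but it is the step that must be written out with care.
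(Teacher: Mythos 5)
Your proposal is correct and follows essentially the same route as the paper: you construct the same extension functor $(a_1,\dots,a_n)\mapsto\bigoplus_{i}F(a_i)$ and reduce all verifications to the same biproduct/matrix bookkeeping, which is also how the paper's sketch handles natural transformations (diagonal matrices with entries $\beta_{a_i}$). The only difference is packaging --- the paper proves that $-\circ\iota$ is strictly surjective on objects and fully faithful, whereas you exhibit a two-sided quasi-inverse together with the comparison isomorphisms $\theta_G$ --- but the underlying content coincides.
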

\begin{proof}
Let $f\colon \A\to \B$ be an additive functor and define 
\[
g\colon \widehat\A\to \B\quad \text{as}\quad g(a_1,\dots,a_n):=f(a_1)\sqcup\cdots\sqcup f(a_n).
\] 
Clearly, $g\circ\iota=f$. Consider now a second functor $g'\colon \widehat{\A}\to \B$ and let us sketch an argument to show that there is an isomorphism
\begin{align}
{\Add(\widehat{\A}, \B)}(g,g')&\to {\Padd(\A,\B)}(f ,g'\circ \iota)\label{map_to_ff_pre_to_add}\\
\notag \alpha&\mapsto \alpha *\iota,
\end{align}
where $*$ denotes the horizontal composition of natural transformations. 
Indeed, let $\alpha\colon g\to g'$ be such that $\alpha*\iota=0$. For any $a\in \widehat{\A}$, $\alpha_a\colon g(a)\to g'(a)$ can be represented by a diagonal matrix and the diagonal entries of this matrix are trivial since $\alpha_a^{i,i}$ is conjugated to $(\alpha*\iota)_{a_i}=0$. 

On the other hand, given $\beta \colon g\circ \iota \to g'\circ \iota$, define $\alpha\colon g\to g'$ as follows: for $a=(a_1,\dots,a_n)\in\widehat{\A}$, $\alpha_a\colon g(a)\to g'(a)$ is the diagonal $n\times n$ matrix $(\alpha_a^{i,j})$ such that $\alpha_a^{i,i}:=\beta_{a_i}\colon g\iota(a_i)\to g'\iota(a_i)$.  It is easy to see that  $\alpha$ is a natural transformation and, clearly,  $\alpha*\iota=\beta$.
\end{proof}

By the above lemma, the $2$-category of small additive categories is reflective in the $2$-category of small preadditive categories, that is, there is a ($2$-)functorial way to make a small preadditive category into an additive category. A second application will be given in the next subsection: we will apply the lemma with $\B=\Ab$ to show that $\A$ is Morita equivalent to $\widehat \A$.

\smallskip
Suppose now $\A$ is an additive category; we define the {\bf idempotent completion} $\A_{\oplus}$ of $\A$ as follows:
\begin{enumerate}[--]
\item $\Ob(\A_{\oplus}):=\{(a,r): a\in \Ob(\A),\, r\colon a\to a \text{ such that }r^2=r\}$;
\item given $(a,r),\, (b,s)\in\Ob (\A_\oplus)$, 
\[
{\A_\oplus}((a,r),(b,s)):=\{t\colon a\to b: t=str\}.
\]
\item composition is as expected. Note that the identity of $(a,r)$ is $r\colon a\to a$. 
\end{enumerate}
It is well-known, and easy to verify, that $\A_\oplus$ is a small additive category where idempotents split due to the following fact: given $a\in \Ob (\A)$ and an idempotent $r\colon a\to a$, there is the following decomposition in $\A_{\oplus}$
\[
(a,\id_a)=(a,r)\oplus (a,\id_a-r)
\]
where clearly $(\id_a-r)=(\id_a-r)^2$ is an idempotent and the inclusions in the coproduct are given by $r\colon (a,r)\to (a,\id_a)$ and $(\id_a-r)\colon (a,\id_a-r)\to (a,\id_a)$. Furthermore, the inclusion $\iota\colon \A\to \A_{\oplus}$ such that $a\mapsto (a,\id_a)$ is universal in a suitable sense (in particular, $\A\cong \A_{\oplus}$ if idempotents split in $\A$):

\begin{lemma}\label{lemma_additive_idempotent closure}
Let $\A$ be a small additive category and $\B$ an  idempotent complete additive category, then $\iota\colon \A\to \A_{\oplus}$ induces an equivalence of categories (here we are using the $2$-categorical structure of $\Add$)
\[
-\circ\iota\colon \Add(\A_{\oplus}, \B)\to \Add(\A,\B).
\]
\end{lemma}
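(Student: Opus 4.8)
The plan is to show that the functor $-\circ\iota$ is essentially surjective, full and faithful; the argument parallels that of Lemma~\ref{lemma_preadditive_additive closure}, the new ingredient being that $\B$ is now assumed idempotent complete.

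First I would prove \emph{essential surjectivity}. Given an additive functor $f\colon\A\to\B$, for each $(a,r)\in\Ob(\A_\oplus)$ the endomorphism $f(r)$ is idempotent, so I would fix a splitting $f(a)\xrightarrow{p_{(a,r)}}g(a,r)\xrightarrow{j_{(a,r)}}f(a)$ in $\B$ with $j_{(a,r)}p_{(a,r)}=f(r)$ and $p_{(a,r)}j_{(a,r)}=\id$. On a morphism $t\colon(a,r)\to(b,s)$ (i.e.\ $t\colon a\to b$ with $t=str$ in $\A$) I would set $g(t):=p_{(b,s)}\,f(t)\,j_{(a,r)}$, and then check — routinely, using $j_{(a,r)}p_{(a,r)}=f(r)$ on the source side, $j_{(b,s)}p_{(b,s)}=f(s)$ on the target side, and that the identity of $(a,r)$ is $r$ — that this assignment is functorial and additive. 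Since the splitting of $f(\id_a)=\id_{f(a)}$ may be taken trivial, the maps $p_{(a,\id_a)}$ assemble into a natural isomorphism $g\circ\iota\cong f$, so $f$ lies in the essential image.

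Next come \emph{faithfulness} and \emph{fullness}, which both rest on the single observation that every $(a,r)$ is a direct summand of $\iota(a)=(a,\id_a)$, with inclusion $j_r\colon(a,r)\to(a,\id_a)$ and retraction $p_r\colon(a,\id_a)\to(a,r)$ both represented by $r\colon a\to a$ and satisfying $p_rj_r=\id_{(a,r)}$. For faithfulness, if $\alpha\colon g\to g'$ has $\alpha*\iota=0$, then naturality of $\alpha$ along $j_r$ gives $\alpha_{(a,r)}=g'(p_r)\,\alpha_{(a,\id_a)}\,g(j_r)=g'(p_r)\,(\alpha*\iota)_a\,g(j_r)=0$, whence $\alpha=0$. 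For fullness, given $\beta\colon g\iota\to g'\iota$ I would set $\alpha_{(a,r)}:=g'(p_r)\,\beta_a\,g(j_r)$ (where $\beta_a\colon g(a,\id_a)\to g'(a,\id_a)$); naturality of $\alpha$ reduces to writing an arbitrary $t\colon(a,r)\to(b,s)$ as $p_s\circ\tilde t\circ j_r$ in $\A_\oplus$, where $\tilde t\colon(a,\id_a)\to(b,\id_b)$ is the same underlying morphism of $\A$, and then running a short diagram chase using the relations $st=t=tr$ (immediate from $t=str$) together with naturality of $\beta$ along $\tilde t$; triviality of the splitting of $\id_a$ then yields $\alpha*\iota=\beta$.

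The main obstacle — indeed the only nontrivial point — is essential surjectivity, where idempotent completeness of $\B$ is used in an essential way to produce the object part of the lift $g$. Once the splittings are fixed, the functoriality of $g$ is a bookkeeping verification, and faithfulness and fullness are purely formal consequences of the summand relation above, exactly as in the ring-theoretic prototype and in Lemma~\ref{lemma_preadditive_additive closure}.
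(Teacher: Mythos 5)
Your proposal is correct and follows essentially the same route as the paper's proof: split the idempotent $f(r)$ in $\B$ to define the lift on objects, and use the summand data $(a,r)\rightleftarrows(a,\id_a)$ (both maps represented by $r$) to establish faithfulness and fullness via $\alpha_{(a,r)}=g'(\pi_{(a,r)})\,\alpha_{(a,\id_a)}\,g(\epsilon_{(a,r)})$. If anything, you are slightly more explicit than the paper about the morphism part of the lifted functor $g$ and about choosing the trivial splitting of $f(\id_a)$ so that $g\circ\iota\cong f$.
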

\begin{proof}
Let $f\colon \A\to \B$ be an additive functor and define a functor $g\colon \A_{\oplus}\to \B$ as follows: given $(a,r)\in \Ob(\A_{\oplus})$, where $r$ is an idempotent of $a\in \A$, then $f(r)$ is an idempotent of $f(a)\in \B$, hence $f(a)=b_1\oplus b_2$ and $f(r)=\pi_1\epsilon_1$, where $\pi_1$ and $\epsilon_1$ are the projection and inclusion relative to $b_1$. We then let $g(a,r):=b_1$. Clearly, $g\circ \iota=f$. Consider a second functor $g'\colon \A_\oplus\to \B$ and let us verify that there is an isomorphism
\begin{align}
{\Add(\A_\oplus, \B)}(f,f')&\to {\Add(\A,\B)}(f\circ \iota ,f'\circ \iota)\label{map_to_ff_add_to_id}\\
\notag \alpha&\mapsto \alpha *\iota.
\end{align}
Indeed, consider $\alpha\colon f\to f'\colon \A_\oplus\to \B$ such that $\alpha*\iota=0$. For any $(a,r)\in \Ob(\A_{\oplus})$ there is a commutative diagram in $\A_{\oplus}$
\[
\xymatrix@C=80pt{
(a,r)\ar[r]^{\epsilon_{(a,r)}:=r}\ar[d]_{\id_{(a,r)}:=r}&(a,\id_a)\ar@/_-12pt/[dl]^-{\ \ \ \pi_{(a,r)}:=r}\\
(a,r)&
}
\]
(here $\pi_{(a,r)}$ and $\epsilon_{(a,r)}$ are represented by the same idempotent morphism $r\colon a\to a$ but the former is a morphism $(a,\id_a)\to (a,r)$, while the latter is a morphism $(a,r)\to (a,\id_a)$). Now, $\alpha_{(a,\id_a)}=(\alpha*\iota)_a=0$ and so $f'(\epsilon_{(a,r)})\alpha_{(a,r)}=\alpha_{(a,\id_a)} f(\epsilon_{(a,r)})=0$, so that $\alpha_{(a,r)}=f'(\pi_{(a,r)})f'(\epsilon_{(a,r)})\alpha_{(a,r)}=0$. Finally, let $\beta\colon f\circ \iota \to f'\circ \iota$ and define $\alpha\colon f\to f'$ as follows: given $(a,r)\in \Ob (\A_{\oplus})$, let $\alpha_{(a,r)}:=f'(\pi_{(a,r)})\beta_a f(\epsilon_{(a,r)})$. Clearly, $\alpha*\iota=\beta$.
\end{proof}
%
%
%
%

By the above lemma, the $2$-category of idempotent complete small additive categories is reflective in the $2$-category of small additive categories, that is, there is a ($2$-)functorial way to make a small additive category idempotent complete. A second application will be given in the next subsection: we will apply the lemma with $\B=\Ab$ to show that $\A$ is Morita equivalent to $\A_\oplus$. 

\medskip
Given a preadditive category $\A$, the idempotent completion of the additive closure $\widehat \A_\oplus$ of $\A$ is usually referred to as the {\bf Cauchy completion} of $\A$ (see \cite{Lawvere}). Furthermore, $\A$ is said to be {\bf Cauchy complete} if it is equivalent to its Cauchy completion.

\subsection{Modules and Morita equivalence}\label{subs_modules}

A {\bf right} (resp., {\bf left}) {\bf module} $M$ over a small preadditive category $\A$ is an (always additive) functor $M\colon \A^{\op}\to \Ab$ (resp., $M\colon \A\to \Ab$). A morphism (a natural transformation) $\phi\colon M\to N$ between right $\A$-modules consists of a family of morphisms
$\phi_a\colon M(a)\to N(a)$ (of Abelian groups), with $a$ ranging in $\Ob (\A)$, such that the following squares commute for all $(r\colon a\to b)\in \A$:
\[
\xymatrix{
M(a)\ar[r]^{\phi_a}&N(a)\\
M(b)\ar[u]^{M(r)}\ar[r]^{\phi_b}&N(b).\ar[u]_{N(r)}
}
\]
We denote by  $\mod \A$ (resp., $\lmod \A$) the category  of right (resp., left) $\A$-modules. Given two right (resp., left) $\A$-modules $M$ and $N$, we denote by $\hom_\A(M,N)$ their group of morphisms in $\mod \A$ (resp., $\lmod \A$). As a natural example of right module over $\A$ one can consider the representable modules
\begin{equation}\label{representables_definition}
H_a:=\A(-,a)\colon \A^{\op}\to \Ab;
\end{equation}
for any $a\in\Ob(\A)$.

\begin{lemma}\label{mod_is_groth}
Let $\A$ be a small preadditive category. Then $\mod \A$ is a Grothendieck category with a family of small projective generators. 
\end{lemma}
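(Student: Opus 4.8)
The plan is to verify the three defining properties of a Grothendieck category — (AB5), the existence of a generator, and (in this case) the stronger statement that there is a family of small projective generators — using the representable modules $H_a = \A(-,a)$ for $a \in \Ob(\A)$ as the candidate family.

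First I would recall that $\mod \A = [\A^{\op}, \Ab]$ is a functor category into the (Grothendieck) category $\Ab$, and that limits and colimits in such a functor category are computed pointwise. This immediately gives that $\mod \A$ is complete and cocomplete, and that (AB5) — i.e. exactness of filtered colimits — holds in $\mod \A$ because it holds in $\Ab$ and the relevant exact sequences and filtered colimits are tested objectwise. Likewise $\mod \A$ is Abelian, with kernels, cokernels, images built pointwise. Second, I would invoke the (enriched) Yoneda lemma: for any right $\A$-module $M$ and any $a \in \Ob(\A)$ there is a natural isomorphism $\hom_\A(H_a, M) \cong M(a)$ of Abelian groups. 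From this three things follow at once. (i) The functor $\hom_\A(H_a, -)\colon \mod \A \to \Ab$ is isomorphic to the evaluation functor $M \mapsto M(a)$, which is exact (since kernels and cokernels are pointwise) and preserves all colimits (again pointwise); hence each $H_a$ is a small projective object. (ii) A morphism $\phi\colon M \to N$ is zero if and only if $\phi_a = 0$ for every $a$, i.e. if and only if $\hom_\A(H_a,\phi) = 0$ for all $a$; equivalently, the family $\{H_a : a \in \Ob(\A)\}$ is a generating family. Since $\Ob(\A)$ is a set ($\A$ being small), we may form $G := \bigsqcup_{a \in \Ob(\A)} H_a$, which is then a single projective generator, and $\mod\A$ has a generator.

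Putting these together: $\mod \A$ is cocomplete, Abelian, satisfies (AB5), and has a generator, hence is a Grothendieck category; moreover the family $\{H_a\}_{a \in \Ob(\A)}$ consists of small (i.e. $\hom_\A(H_a,-)$ commutes with coproducts, in fact with all colimits) projective generators, which is the asserted strengthening. I would phrase the ``generating family'' part carefully: a set $\{H_a\}$ is generating precisely when, for every non-zero $M$, some $\hom_\A(H_a, M) = M(a)$ is non-zero, which is clear since $M$ non-zero means $M(a) \neq 0$ for some $a$; and the epimorphism-onto-every-object form ($\bigsqcup_i \bigsqcup_{f\in\hom(H_{a_i},M)} H_{a_i} \twoheadrightarrow M$) also follows, the cokernel being pointwise zero.

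The only mildly delicate point — not really an obstacle — is making sure the right variance conventions are consistent: right $\A$-modules are functors $\A^{\op}\to\Ab$, so the correct representables are the covariant-in-the-second-variable functors $H_a = \A(-,a)$, and the Yoneda isomorphism reads $\hom_\A(\A(-,a), M) \cong M(a)$; one must check this is an isomorphism of \emph{additive} functors and natural transformations, but this is immediate since $\A$ is preadditive and all the modules and morphisms in sight are additive, so the classical Yoneda argument transports verbatim. I expect the whole proof to be short, essentially a citation of the standard facts about functor categories together with Yoneda; the main thing to be careful about is simply to state explicitly that evaluation at $a$ is exact and coproduct-preserving, which is what upgrades ``generator'' to ``small projective generator''.
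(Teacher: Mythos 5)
Your proposal is correct and follows essentially the same route as the paper: co/limits and exactness are checked pointwise (giving a bicomplete Abelian category satisfying (AB5)), and the representables $H_a$ are identified, via the additive Yoneda lemma, as a family of small (finitely generated) projective generators. The paper's own proof is just a sketch of exactly these steps, so your more detailed write-up is a faithful expansion of it.
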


The above lemma is well-known but nevertheless let us give a sketchy proof. The zero object $0$ in $\mod \A$ is the constant functor $a\mapsto 0$, for all $a\in\Ob(\A)$ and co/kernels are constructed componentwise. 
As a consequence, a sequence $0\to N\to M\to M/N\to 0$ in $\mod \A$ is short exact if and only if $0\to N(a)\to M(a)\to (M/N(a))(=M(a)/N(a))\to 0$ is a short exact sequence in $\Ab$, for all $a\in \Ob(\A)$.  Furthermore, given a morphism $\phi$ in $\mod \A$, the canonical map $\varphi\colon\coker(\Ker(\phi))\to \Ker(\coker(\phi))$ is an isomorphism since, for any $a\in \Ob(\A)$, the map $\varphi_a$ is an isomorphism in $\Ab$.

Furthermore, arbitrary co/limits are induced componentwise by those in $\Ab$.  Hence, $\mod \A$ is a
bicomplete Abelian category where products and direct limits are  exact.

To see that $\mod \A$ is Grothendieck, it remains to describe a family of generators. In fact, one can find a family of finitely generated (=finitely presented) projective generators. To give a complete description of such modules, let us introduce the following notation: for a morphism $\alpha \colon x\to  y$ in $\mathcal{A}$ we let 
\[
\alpha\mathcal{A}:= \Im(\alpha\circ-\colon H_x\to  H_y)\leq H_y.
\] 
In particular, $\id_x\A=H_x$, for all $x\in \Ob(\A)$.

\begin{lemma}\label{description_fpp_lemma}
Let $\mathcal{A}$ be a small preadditive category, let $P$ be a right $\mathcal{A}$-module, and consider the following assertions:
\begin{enumerate}[\rm (1)]
\item $P$ is finitely generated projective;
\item $P$ is isomorphic to $\epsilon\mathcal{A}$, for some idempotent endomorphism $\epsilon\in\mathcal{A}(x,x)$;
\item $P$ is isomorphic to $H_x$, for some $x\in\Ob(\A)$.
\end{enumerate}
The implications $(3)\Rightarrow(2)\Rightarrow(1)$ always hold true. On the other hand, $(1)\Rightarrow (2)$ holds if $\A$ is additive, while $(2)\Rightarrow (3)$ holds if $\A$ is idempotent complete. In particular, all the assertions are equivalent if $\A$ is Cauchy complete.\end{lemma}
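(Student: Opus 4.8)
The plan is to establish the two unconditional implications first, then the two conditional ones, and finally assemble them for the Cauchy complete case.

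\emph{The unconditional implications $(3)\Rightarrow(2)\Rightarrow(1)$.} For $(3)\Rightarrow(2)$ there is nothing to prove, since $H_x=\id_x\A$ and $\id_x$ is an idempotent endomorphism of $x$. For $(2)\Rightarrow(1)$ I would first record that each $H_x$ is itself finitely generated projective: by the Yoneda Lemma the functor $\hom_\A(H_x,-)$ is naturally isomorphic to the evaluation functor $M\mapsto M(x)$, and since co/kernels and coproducts in $\mod\A$ are computed componentwise (Lemma~\ref{mod_is_groth}), this evaluation functor is exact, preserves coproducts, and commutes with direct unions of subobjects; hence $H_x$ is small (so finitely generated) and projective. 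Now, if $\epsilon\in\A(x,x)$ is idempotent then post-composition $\epsilon\circ-\colon H_x\to H_x$ is an idempotent endomorphism of $H_x$ in the Abelian category $\mod\A$, so $H_x\cong\Im(\epsilon\circ-)\oplus\Im((\id_x-\epsilon)\circ-)$; thus $\epsilon\A=\Im(\epsilon\circ-)$ is a direct summand of $H_x$, and a direct summand of a finitely generated projective module is finitely generated projective.

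\emph{$(1)\Rightarrow(2)$ when $\A$ is additive.} Let $P$ be finitely generated projective. Since $\{H_x:x\in\Ob(\A)\}$ is a generating family of $\mod\A$ (Lemma~\ref{mod_is_groth}) and $P$ is finitely generated, there is an epimorphism $\bigoplus_{i=1}^{n}H_{x_i}\twoheadrightarrow P$. Additivity of $\A$ lets us form $x:=x_1\sqcup\cdots\sqcup x_n$ in $\A$, and since finite coproducts in $\A$ are also products we get a natural isomorphism $\A(-,x)\cong\A(-,x_1)\oplus\cdots\oplus\A(-,x_n)$, i.e. $H_x\cong\bigoplus_i H_{x_i}$; so there is an epimorphism $\pi\colon H_x\twoheadrightarrow P$. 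As $P$ is projective, $\pi$ splits, say $\pi\sigma=\id_P$ with $\sigma\colon P\to H_x$, and then $e:=\sigma\pi$ is an idempotent endomorphism of $H_x$ with $P\cong\Im(e)$. Finally the Yoneda Lemma identifies $\End_{\mod\A}(H_x)$ with $\A(x,x)$ as rings via $f\mapsto f\circ-$, so $e=\epsilon\circ-$ for an idempotent $\epsilon\in\A(x,x)$, whence $P\cong\Im(e)=\Im(\epsilon\circ-)=\epsilon\A$.

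\emph{$(2)\Rightarrow(3)$ when $\A$ is idempotent complete, and conclusion.} Let $\epsilon\in\A(x,x)$ be idempotent. Idempotent completeness provides a splitting $\epsilon=ip$ with $p\colon x\to y$, $i\colon y\to x$ and $pi=\id_y$; note $\epsilon i=i$ and $p\epsilon=p$. Then $i\circ-\colon H_y\to H_x$ is a split monomorphism (retraction $p\circ-$), hence an isomorphism onto its image, and $\Im(i\circ-)=\Im(\epsilon\circ-)$: the factorization $\epsilon\circ-=(i\circ-)\circ(p\circ-)$ gives $\Im(\epsilon\circ-)\subseteq\Im(i\circ-)$, while $i\circ-=(\epsilon i)\circ-=(\epsilon\circ-)\circ(i\circ-)$ gives the reverse inclusion. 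Therefore $\epsilon\A=\Im(\epsilon\circ-)\cong H_y$. For the last sentence, if $\A$ is Cauchy complete it is equivalent to an additive idempotent complete category, and such an equivalence induces an equivalence of module categories carrying representables to representables and finitely generated projectives to finitely generated projectives, so $(1)$, $(2)$ and $(3)$ all hold. The only points needing genuine care are the Yoneda identification $\End_{\mod\A}(H_x)\cong\A(x,x)$ together with the matching of the image of an idempotent endomorphism with the subobject $\epsilon\A$, the reduction of ``finitely generated'' to a finite coproduct of representables (and then, using additivity, to a single representable), and the equality $\Im(i\circ-)=\Im(\epsilon\circ-)$ in the last implication; everything else is routine bookkeeping.
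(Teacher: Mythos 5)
Your proposal is correct and follows essentially the same route as the paper: the explicit retraction $\beta\mapsto\epsilon\circ\beta$ for $(2)\Rightarrow(1)$ (you phrase it as the canonical splitting of the idempotent $\epsilon\circ-$ on $H_x$, which is the same thing), the reduction to a single representable via the biproduct $x=x_1\sqcup\cdots\sqcup x_n$ and the Yoneda identification $\End(H_x)\cong\A(x,x)$ for $(1)\Rightarrow(2)$, and the splitting $\epsilon=ip$, $pi=\id_y$ giving $\epsilon\A\cong H_y$ for $(2)\Rightarrow(3)$. No gaps.
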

\begin{proof}
(3)$\Rightarrow$(2) is clear since $H_x\cong \id_x\A$, and $\id_x$ is an idempotent in $\A(x,x)$.

\noindent\smallskip
(2)$\Rightarrow$(1). The fact that $H_x$ is finitely generated and projective is a consequence of (the additive version of) the Yoneda Lemma and some standard computations. Hence,  it is enough to prove that the inclusion $\iota\colon \epsilon\mathcal{A}\hookrightarrow H_x$ is a section in $\mod \A$. Indeed, for each $a\in\Ob(\A)$, we define $\pi_a\colon H_x(a)=\mathcal{A}(a,x)\to (\epsilon\mathcal{A})(a)$ by  $\pi_a(\beta)=\epsilon\circ\beta$. It is routine to check that the $\pi_a$'s define a natural transformation $\pi\colon H_x\to \epsilon\mathcal{A}$ that is a retraction for the inclusion. 

\smallskip\noindent
(1)$\Rightarrow$(2), when $\A$ is additive. Being $P$ finitely generated, we have a retraction 
\[
\xymatrix{
\rho\colon \coprod_{i=1}^nH_{x_i}\twoheadrightarrow P.
}
\] 
By additivity, the coproduct $x=\coprod_{i=1}^nx_i$ exists in $\mathcal{A}$, so we have an isomorphism  $\coprod_{i=1}^nH_{x_i}\cong H_x$, and we identify $\rho$ as a retraction $\rho\colon H_x\twoheadrightarrow P$. By choosing a section $\lambda \colon P\to  H_x$ for $\rho$, we get an idempotent endomorphism $\epsilon:=\lambda\circ\rho$ of $H_x$ whose image is isomorphic to $P$. By the Yoneda Lemma, we then get $\epsilon=\epsilon^2\in\hom_\A(H_x,H_x)\cong (H(x))(x)=\mathcal{A}(x,x)$, so that $P\cong\epsilon\mathcal{A}$. 

\smallskip\noindent
(2)$\Rightarrow$(3), when $\A$ is idempotent complete. As $\epsilon$ is an idempotent, and $\A$ is idempotent complete, there are $y\in\Ob(\A)$, $\rho\colon x\to y$ and $\lambda\colon y\to x$ such that $\epsilon=\lambda\circ\rho$ and $\rho\circ\lambda=\id_y$. Hence, $P\cong \epsilon\A\cong \rho\A=H_y$.
\end{proof}

As a consequence of the above lemma one can give a second description of the Cauchy completion $\widehat \A_\oplus$ of a preadditive category $\A$. For that, let us introduce the following notation: for a class $\S$ of objects  we let
\[
\add(\S):=\{\text{summands of finite coproducts of objects in $\S$}\}.
\]

\begin{corollary}\label{application_yoneda}
Let $\A$ be a small preadditive category and denote by $\proj(\A)$ the class of finitely generated projective $\A$-modules. Then, 
\begin{enumerate}[\rm (1)]
\item $\A$ is equivalent to the full subcategory of $\proj(\A)$ spanned by the representables;
\item $\widehat \A$ is equivalent to the full subcategory of $\proj(\A)$ spanned by the finite coproducts of representables;
\item $\widehat\A_{\oplus}$ is equivalent to $\proj(\A)=\add(H_a:a\in\Ob(\A))$. 
\end{enumerate}
\end{corollary}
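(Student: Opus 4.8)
The plan is to derive all three statements from a single chain of constructions. First I would observe that the (additive) Yoneda embedding $h\colon\A\to\mod\A$, $a\mapsto H_a$, takes values in $\proj(\A)$ by Lemma~\ref{description_fpp_lemma} ($(3)\Rightarrow(1)$), and that $\proj(\A)$ is an additive, idempotent complete category. Hence, by the universal properties established in Lemmas~\ref{lemma_preadditive_additive closure} and~\ref{lemma_additive_idempotent closure}, $h$ extends (uniquely up to isomorphism) first to an additive functor $\widehat h\colon\widehat\A\to\mod\A$ and then to an additive functor $\widetilde h\colon\widehat\A_\oplus\to\mod\A$. The whole proof then amounts to checking that these extensions remain fully faithful and to identifying their essential images. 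For (1) this is immediate: the additive Yoneda Lemma gives $\hom_\A(H_a,H_b)\cong H_b(a)=\A(a,b)$ naturally, so $h$ is fully faithful, and its essential image is by construction the full subcategory of representables inside $\proj(\A)$.

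For (2), I would apply Lemma~\ref{lemma_preadditive_additive closure} to $h$ to get $\widehat h$ with $\widehat h\circ\iota\cong h$, explicitly $\widehat h(a_1,\dots,a_n)\cong H_{a_1}\oplus\cdots\oplus H_{a_n}$. Since $\widehat h$ is additive, restricts to the fully faithful $h$, and every object of $\widehat\A$ is a finite coproduct of objects of $\A$, writing the relevant hom-groups as biproducts, $\widehat\A((a_i)_i,(b_j)_j)\cong\bigoplus_{i,j}\A(a_i,b_j)$ and $\hom_\A(\bigoplus_iH_{a_i},\bigoplus_jH_{b_j})\cong\bigoplus_{i,j}\hom_\A(H_{a_i},H_{b_j})$, one sees that the map induced by $\widehat h$ is componentwise $h$, hence an isomorphism; so $\widehat h$ is fully faithful. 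Its essential image is the class of finite coproducts of representables, which lie in $\proj(\A)$ since a finite coproduct of finitely generated projectives is finitely generated projective.

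For (3), note that $\mod\A$ is abelian, hence idempotent complete, so Lemma~\ref{lemma_additive_idempotent closure} applied to $\widehat h$ produces $\widetilde h$ with $\widetilde h\circ\iota\cong\widehat h$, where $\widetilde h(a,r)$ is a chosen image of the idempotent $\widehat h(r)$ on $\widehat h(a)$. To see $\widetilde h$ is fully faithful I would compare hom-groups: $\widehat\A_\oplus((a,r),(b,s))=\{t\colon a\to b:t=s\circ t\circ r\}$ is the image of the idempotent $t\mapsto s\circ t\circ r$ on $\widehat\A(a,b)$, whereas $\hom_\A(\widetilde h(a,r),\widetilde h(b,s))$ is the image of the idempotent $\eta\mapsto\widehat h(s)\circ\eta\circ\widehat h(r)$ on $\hom_\A(\widehat h(a),\widehat h(b))$; since $\widehat h$ is fully faithful and additive it carries the first idempotent to the second, hence restricts to an isomorphism of images, and this isomorphism is precisely the map induced by $\widetilde h$. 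The essential image of $\widetilde h$ then consists of the summands of finite coproducts of representables, that is, of $\add(H_a:a\in\Ob(\A))$. Finally, to close the loop I would check $\add(H_a:a\in\Ob(\A))=\proj(\A)$: the inclusion $\subseteq$ is clear, while any finitely generated projective $P$ admits a split epimorphism from some $\coprod_{i=1}^nH_{x_i}$ and is therefore a direct summand of it.

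The part I expect to require the most care is keeping track of fully faithfulness through the two formal completions in (2) and (3); once the hom-groups are rewritten as biproducts for $\widehat\A$ and as images of the conjugation idempotents for $\widehat\A_\oplus$, the verifications are routine, and everything else reduces to the Yoneda Lemma, Lemma~\ref{description_fpp_lemma}, and the universal properties already in hand.
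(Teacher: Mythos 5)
Your proposal is correct and follows essentially the same route as the paper: the paper also builds the functors $\widehat y\colon\widehat\A\to\mod\A$ and $\widehat y_\oplus\colon\widehat\A_\oplus\to\mod\A$ (explicitly, rather than via the universal properties of Lemmas~\ref{lemma_preadditive_additive closure} and~\ref{lemma_additive_idempotent closure}, but they are the same functors), checks full faithfulness by the biproduct decomposition in (2) and by the idempotent/image comparison $f\mapsto\pi_s\circ\widehat y(f)\circ\iota_r$ in (3), and identifies the essential image with $\add(H_a:a\in\Ob(\A))=\proj(\A)$ exactly as you do.
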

\begin{proof}
(1) is a consequence of the Yoneda Lemma. In fact, the functor 
\[
y\colon \A\to \mod \A,
\] 
that maps an object $a\in\Ob(\A)$ to $H_a$, is fully faithful and we have already observed that each of the $H_a$'s is finitely generated and projective. 

\smallskip\noindent
(2) Consider a functor 
\[
\widehat y\colon \widehat \A\to \mod \A
\] 
that maps an object $(a_1,\dots,a_k)\in \Ob(\widehat \A)$ to the coproduct $\coprod_{i=1}^kH_{a_k}$, and that sends a morphism in $\Ob(\widehat \A)$ (which is by definition a suitable matrix) to the morphism between coproducts represented by the corresponding matrix. This functor is clearly fully faithful, so that (2) also follows. 

\smallskip\noindent
(3) Let us introduce some notation first: given $(a,r)\in\Ob(\widehat\A_{\oplus})$ (that is, $a\in \Ob(\widehat\A)$ and $r$ an idempotent in $\widehat\A(a,a)$) the morphism $\widehat y(r)\colon \widehat y(a)\to \widehat y(a)$ is an idempotent endomorphism in $\proj(\A)$, so we obtain the following epi-mono factorization in $\proj(\A)$:
\[
\xymatrix@C=50pt@R=3pt{
 \widehat y(a)\ar[rr]^{\widehat y(r)}\ar@/_5pt/[dr]_{\pi_r}&& \widehat y(a)\\
&P_r\ar@/_5pt/[ur]_{\iota_r}&
} 
\]
where $\pi_r\iota_r=\id_{P_r}$, and where $P_r$ is finitely generated projective, as it is a summand of the finitely generated projective object $\widehat y(a)$. We can now define the following functor:
\[
\widehat y_{\oplus}\colon \widehat \A_{\oplus}\to \mod \A
\] 
mapping an object $(a,r)$ in $\widehat \A_{\oplus}$ to $P_r$, and a morphism $f\colon (a,r)\to (b,s)$ to $\pi_s\circ\widehat y(f)\circ\iota_r$. Let us verify that $\widehat y_{\oplus}$ is an equivalence. Indeed, any $P\in \proj(\A)$ is a summand of a finite coproduct of representables and so, using the equivalence proved in part (2), there is an object $a$ in $\widehat \A$ and an idempotent endomorphism $r\in \widehat\A(a,a)$ such that $P= \Im(\widehat y(r))$, so that $P=P_r$. To conclude, let $(a,r)$ and $(b,s)$ be objects in $\widehat\A_\oplus$ and let us verify that the following homomorphism is bijective:
\begin{align*}
\widehat\A_\oplus((a,r),(b,s))&\to \hom_\A(P_r,P_s)\\
f&\mapsto \pi_s\circ\widehat y(f)\circ\iota_r.
\end{align*}
In fact, given $f\in \widehat\A_\oplus((a,r),(b,s))$ (so $f=s\circ f\circ r$) such that $ \pi_s\circ\widehat y(f)\circ\iota_r=0$, then $\widehat y(f)= \widehat y(s)\circ \widehat y(f)\circ \widehat y(r)= \iota_s\circ\pi_s\circ \widehat y(f)\circ \iota_r\circ\pi_r=0$, so $f=0$ by (2). On the other hand, given $\phi \in  \hom_\A(P_r,P_s)$, let $\tilde\phi:=\iota_s\circ \phi\circ \pi_r\colon \widehat y(a)\to \widehat y(b)$ and let $f\colon a\to b$ be such that $\widehat y(f)=\tilde\phi$. Then $f=s\circ f\circ r$, so $f$ can be viewed as a morphism $(a,r)\to(b,s)$ in $\widehat\A_\oplus$ and, as such, $\widehat y_\oplus(f)=\pi_s\circ\widehat y(f)\circ\iota_r=   \pi_s\circ \iota_s\circ \phi\circ \pi_r\circ\iota_r =\phi$.
\end{proof}

Consider now an additive functor $\phi\colon \A \to \B$ between two small preadditive categories. Then, $\phi$ induces a {\bf restriction of scalars functor}
\[
\phi_*\colon \mod{\B}\to \mod \A\qquad\text{such that}\qquad M\mapsto M\circ\phi.
\]
It is easy to verify that $\phi_*$ is exact and that it commutes with co/products so, by the Special Adjoint Functor Theorem (see, for example, \cite[Thm. 3.3.4]{Bo}), it has a left adjoint, called the {\bf extension of scalars}, and a right adjoint, called the {\bf coextension of scalars}, denoted respectively by $\phi^*$ and $\phi^!$. 
\[
\xymatrix{
\mod\B\ar[rr]|{\phi_*}&&\mod \A\ar@/_15pt/[ll]_{\phi^!}\ar@/_-15pt/[ll]^{\phi^*}
}
\]
As a corollary of Lemm.\,\ref{lemma_preadditive_additive closure} and \ref{lemma_additive_idempotent closure} we can give a precise relation between $\mod\A$, $\mod{\widehat\A}$, and $\mod {\widehat \A_\oplus}$:

\begin{corollary}\label{ADD_morita}
Given a small preadditive category $\A$, consider the inclusions $\iota\colon \A\to \widehat{\A}$ and $\iota'\colon \widehat\A\to \widehat{\A}_\oplus$. The restrictions of scalars along $\iota$ and $\iota'$ are both equivalences. As a consequence, two small preadditive categories $\A$ and $\B$ are Morita equivalent if and only if there is an equivalence of categories $\widehat\A_\oplus\cong\proj(\A)\cong \proj(\B)\cong \widehat\B_\oplus$.
\end{corollary}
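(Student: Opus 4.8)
The plan is to derive the two equivalence statements directly from the reflectivity Lemmas~\ref{lemma_preadditive_additive closure} and~\ref{lemma_additive_idempotent closure}, applied with $\B=\Ab$, and then to recover each small preadditive category, up to Cauchy completion, from its module category by means of Corollary~\ref{application_yoneda}.

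First I would record two formal facts. Restriction of scalars along an additive functor $f\colon \C\to\D$ of small preadditive categories is precomposition with $f^{\op}$, once $\mod\C$ is identified with the category $\Padd(\C^{\op},\Ab)$ of additive functors $\C^{\op}\to\Ab$. Moreover both $\widehat{(-)}$ and $(-)_{\oplus}$ commute with $(-)^{\op}$: transposing matrices identifies $\widehat{\A^{\op}}$ with $(\widehat\A)^{\op}$, and, for an additive category, its idempotents and those of its opposite coincide, so $\big((\widehat\A)_{\oplus}\big)^{\op}=\big((\widehat\A)^{\op}\big)_{\oplus}$. Under $(-)^{\op}$ the inclusion $\iota\colon\A\to\widehat\A$ becomes the canonical inclusion $\A^{\op}\to\widehat{\A^{\op}}$, and $\iota'\colon\widehat\A\to\widehat\A_{\oplus}$ becomes the canonical inclusion $(\widehat\A)^{\op}\to\big((\widehat\A)^{\op}\big)_{\oplus}$. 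Since $\Ab$ is an idempotent complete additive category, Lemma~\ref{lemma_preadditive_additive closure} applied to $\A^{\op}$ with $\B=\Ab$ says precisely that $\iota_*\colon\mod{\widehat\A}\to\mod\A$ is an equivalence, and Lemma~\ref{lemma_additive_idempotent closure} applied to the small additive category $(\widehat\A)^{\op}$ with $\B=\Ab$ says precisely that $(\iota')_*\colon\mod{\widehat\A_{\oplus}}\to\mod{\widehat\A}$ is an equivalence. Composing gives equivalences $\mod\A\cong\mod{\widehat\A}\cong\mod{\widehat\A_{\oplus}}$, and likewise for $\B$.

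For the consequence, $\A$ and $\B$ are Morita equivalent by definition when $\mod\A\cong\mod\B$, and by the chain just obtained this holds if and only if $\mod{\widehat\A_{\oplus}}\cong\mod{\widehat\B_{\oplus}}$; so it remains to check that this last condition is equivalent to $\widehat\A_{\oplus}\cong\widehat\B_{\oplus}$. One direction is clear, since an equivalence of small categories induces an equivalence of their module categories by restriction along a quasi-inverse. For the converse, any equivalence between Abelian categories is additive and exact, hence it preserves the intrinsic properties of being finitely generated and of being projective; therefore an equivalence $\mod{\widehat\A_{\oplus}}\cong\mod{\widehat\B_{\oplus}}$ restricts to an equivalence $\proj(\widehat\A_{\oplus})\cong\proj(\widehat\B_{\oplus})$ on finitely generated projective objects. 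Since $\widehat\A_{\oplus}$ is additive and idempotent complete it is Cauchy complete, so by Lemma~\ref{description_fpp_lemma} every finitely generated projective $\widehat\A_{\oplus}$-module is representable and the Yoneda embedding gives $\proj(\widehat\A_{\oplus})\cong\widehat\A_{\oplus}$ — equivalently, apply Corollary~\ref{application_yoneda}(3) to the Cauchy complete category $\widehat\A_{\oplus}$ — and similarly for $\B$. Combining with Corollary~\ref{application_yoneda}(3) for $\A$ and $\B$ themselves yields the asserted chain $\widehat\A_{\oplus}\cong\proj(\A)\cong\proj(\B)\cong\widehat\B_{\oplus}$. The step I expect to require the most care is this converse direction: one must recognize $\widehat\A_{\oplus}$ intrinsically inside $\mod{\widehat\A_{\oplus}}$ as the category of its finitely generated projective objects, which uses Corollary~\ref{application_yoneda}(3) (via Lemma~\ref{description_fpp_lemma}) together with the stability of ``finitely generated projective'' under equivalences of Abelian categories; everything else is a formal consequence of the two reflectivity lemmas.
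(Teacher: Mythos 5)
Your proposal is correct and follows the route the paper intends: the first assertion is obtained by applying Lemmas~\ref{lemma_preadditive_additive closure} and~\ref{lemma_additive_idempotent closure} to $\A^{\op}$ (resp.\ $(\widehat\A)^{\op}$) with $\B=\Ab$, using the compatibility of $\widehat{(-)}$ and $(-)_\oplus$ with opposites, and the second assertion follows from Corollary~\ref{application_yoneda}(3) together with the fact that an equivalence of module categories restricts to the finitely generated projectives. Your care in identifying $\proj(\widehat\A_\oplus)\cong\widehat\A_\oplus$ intrinsically is exactly the point the paper delegates to Corollary~\ref{application_yoneda} and Remark~\ref{rem_morita_cauchy}.
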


Let us conclude this subsection with the following remark, where an object $a\in\Ob(\A)$ is a {\bf $\oplus$-generator} if $\A=\add(a)$:

\begin{remark}\label{rem_morita_cauchy}
By the above corollary, we obtain the following bijections:
{\small 
\[
\xymatrix@R=12pt@C=-60pt{
{\left\{\begin{matrix}\text{(Ab.$3$) Abelian categories with a set}\\ \text{of small projective generators}\\ \text{up to equivalence}\end{matrix}\right\}}\ar@{<->}[rr]^(.55){1:1}\ar@{<->}[rd]^(.65){1:1}&&{\left\{\begin{matrix}\text{Small Cauchy complete}\\ \text{additive categories}\\ \text{up to equivalence}\end{matrix}\right\}}\\
&{\left\{\begin{matrix}\text{Preadditive categories up to Morita equivalence}\end{matrix}\right\}}\ar@{<->}[ru]^(.35){1:1}.
}
\]
}
Furthermore, these bijections induce, by restriction, the following ones:
{\small 
\[
\xymatrix@R=15pt@C=-40pt{
{\left\{\begin{matrix}\text{(Ab.$3$) Abelian categories with a}\\ \text{small projective generator}\\ \text{up to equivalence}\end{matrix}\right\}}\ar@{<->}[rr]^(.51){1:1}\ar@{<->}[rd]^(.65){1:1}&&{\left\{\begin{matrix}\text{Small Cauchy complete additive}\\ \text{categories with a $\oplus$-generator}\\ \text{up to equivalence}\end{matrix}\right\}}\\
&{\left\{\begin{matrix}\text{Rings up to Morita equivalence}\end{matrix}\right\}}\ar@{<->}[ru]^(.35){1:1}
}
\]
}
\end{remark}

\subsection{Traces and bimodules}\label{subs_traces_and_bimodules}

Given a class $\S$ of $\A$-modules and an $\A$-module $M$, we can construct a submodule $\tr_\S(M)$ of $M$ such that any map $S\to M$, with $S\in \S$, factors through the inclusion $\tr_\S(M)\to M$:

\begin{definition}
Let $\mathcal{S}$ be a class of right $\mathcal{A}$-modules and $M$ a right $\mathcal{A}$-module, then the sum of the submodules of $M$ of the form $\Im(f)$, for some morphism $f\colon S\to M$ in $\mod \A$, with $S\in\mathcal{S}$, is called the {\bf trace of $\mathcal{S}$ in $M$} and denoted by $\tr_\mathcal{S}(M)$.
 \end{definition}
 
In the following lemma we see that the assignment $M\mapsto \tr_{\S}(M)$ is in fact functorial:
 
\begin{lemma}\label{lemma_traces_are_functorial}
Let $\A$ be a preadditive category and $\mathcal{S}$ a class of $\mathcal{A}$-modules. Then, the assignment 
\[
M\mapsto\tr_\mathcal{S}(M)
\] 
defines a subfunctor of the identity $\mod \A\to\mod \A$.
\end{lemma}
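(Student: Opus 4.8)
The plan is to verify the two requirements for $\tr_\S$ to be a subfunctor of the identity of $\mod\A$: (i) for every right $\A$-module $M$, the object $\tr_\S(M)$ is a submodule of $M$; and (ii) every morphism $\phi\colon M\to N$ in $\mod\A$ carries $\tr_\S(M)$ into $\tr_\S(N)$, so that $\phi$ corestricts to a morphism $\tr_\S(\phi)\colon\tr_\S(M)\to\tr_\S(N)$ making the obvious square with the inclusions commute. Point (i) is immediate from the definition, as $\tr_\S(M)$ is \emph{by construction} a sum of submodules of $M$, hence — since $\mod\A$ is Grothendieck, cf.\ Lemma~\ref{mod_is_groth}, so that arbitrary sums of subobjects exist — a submodule of $M$. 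Note also that, $\mod\A$ being well-powered, the submodules of $M$ of the form $\Im(f)$ form a \emph{set}, so no set-theoretic issue arises even if $\S$ is a proper class.

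For point (ii), fix $\phi\colon M\to N$ and consider the preimage $\phi^{-1}(\tr_\S(N))$, a submodule of $M$ (the pullback of $\tr_\S(N)\hookrightarrow N$ along $\phi$, computed componentwise from preimages in $\Ab$). For any $f\colon S\to M$ with $S\in\S$ we have $\phi\circ f\colon S\to N$ with $S\in\S$, so $\Im(\phi\circ f)\le\tr_\S(N)$ by the definition of the trace; equivalently, since $\phi(\Im(f))=\Im(\phi\circ f)$, the submodule $\Im(f)$ is contained in $\phi^{-1}(\tr_\S(N))$. As $\tr_\S(M)$ is the \emph{least} submodule of $M$ containing all such $\Im(f)$, we conclude $\tr_\S(M)\le\phi^{-1}(\tr_\S(N))$, that is, $\phi(\tr_\S(M))\le\tr_\S(N)$. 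Hence $\phi$ corestricts to $\tr_\S(\phi)\colon\tr_\S(M)\to\tr_\S(N)$, and by construction $\iota_N\circ\tr_\S(\phi)=\phi\circ\iota_M$, where $\iota_M\colon\tr_\S(M)\hookrightarrow M$ and $\iota_N\colon\tr_\S(N)\hookrightarrow N$ denote the inclusions.

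Finally, functoriality is formal: for $\psi\colon N\to L$, both $\tr_\S(\psi)\circ\tr_\S(\phi)$ and $\tr_\S(\psi\circ\phi)$ become equal to $(\psi\circ\phi)\circ\iota_M$ after postcomposition with the monomorphism $\iota_L$, hence they coincide; likewise $\tr_\S(\id_M)=\id_{\tr_\S(M)}$. The relations $\iota_N\circ\tr_\S(\phi)=\phi\circ\iota_M$ then express precisely that the inclusions $\iota_M$ assemble into a natural transformation $\tr_\S\Rightarrow\id_{\mod\A}$, which is exactly the assertion that $\tr_\S$ is a subfunctor of the identity. The only non-formal ingredient is the componentwise verification in $\Ab$ that $\phi(\Im(f))=\Im(\phi\circ f)$ and that preimages of subgroups behave as expected — routine, but worth recording — so I do not expect a genuine obstacle here; the one conceptual point to get right is that the argument is phrased via the preimage $\phi^{-1}(\tr_\S(N))$, which already contains every $\Im(f)$, rather than trying to push the (possibly large) sum $\sum_f\Im(f)$ forward directly.
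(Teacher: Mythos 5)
Your proposal is correct and follows essentially the same route as the paper: the key step in both is the observation that for $f\colon S\to M$ with $S\in\S$ one has $\phi(\Im(f))=\Im(\phi\circ f)\leq \tr_\S(N)$, whence $\phi(\tr_\S(M))\leq\tr_\S(N)$. Your rephrasing via the preimage $\phi^{-1}(\tr_\S(N))$ and the explicit check of the formal functoriality axioms are just a more detailed writing-up of the same argument.
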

\begin{proof}
It is enough to show that, given a morphism $\phi \colon M\to N$ in 
$\mod\mathcal{A}$, then $\phi 
(\tr_\mathcal{S}(M))\leq\tr_\mathcal{S}(M)$. But this is 
clear since, given any morphism $f\colon S\to M$, with 
$S\in\mathcal{S}$, we have $\phi (\Im(f))=\Im(\phi\circ f)$.
%
%
\end{proof}

Recall now that an $\mathcal{A}$-{\bf bimodule} is a bifunctor 
\[
X\colon\mathcal{A}^{\op}\times\mathcal{A}\to \Ab
\] 
which is additive in each component. The {\bf regular $\A$-bimodule} is the bifunctor $\A(-,-)\colon \mathcal{A}^{\op}\times\mathcal{A}\to \Ab$.  
A {\bf sub-$\A$-bimodule} $Y$ of an $\A$-bimodule $X$ is just a sub-bifunctor, that is, $Y$ is a bifunctor $\mathcal{A}^{\op}\times\mathcal{A}\to \Ab$ such that 
\begin{itemize}
\item $Y(a,b)\leq X(a,b)$ is a subgroup, for all $a,\, b\in\Ob(\A)$;
\item $Y(\alpha,\beta)$ is the restriction of $X(\alpha,\beta)$, for each morphism $(\alpha,\beta)$ in $\mathcal{A}^{\op}\times\mathcal{A}$.
\end{itemize}
Given an $\A$-bimodule $X$ and $b\in\Ob(\mathcal{A})$, we can define two additive functors 
\[
X_b\colon\mathcal{A}^{\op}\to\Ab\quad\text{and}\quad X^b\colon \A\to \Ab, \text{ where:}
\] 
\begin{itemize}
\item $X_b\colon a\mapsto X(a,b)$ and $X^b\colon a\mapsto X(b,a)$, for all $a\in \Ob(\A)$;
\item $X_b(f):=(-\circ f)\colon X(a',b)\to X(a,b)$ and $X^b(f):=(f\circ -)\colon X(b,a)\to X(b,a')$, for all $f\colon a\to a'$ in $\A$.
\end{itemize}
Hence, $X_b\in\mod \A$ and $X^a\in\lmod \A:=\mod {\A^{\op}}$. One can check that the assignment $b\mapsto X_b$ (resp., $a\mapsto X^a$) defines an additive functor $\mathcal{A}\to\mod \A$ (resp., $\mathcal{A}^{\op}\to\lmod {\A}$). Notice that, applying this construction to the regular $\A$-bimodule we get $\A(-,-)_a=H_a$, for all $a\in\Ob(\A)$. 

\medskip
On the other hand, an additive functor $M\colon\mathcal{A}\to\mod \A$ (resp.,  $L\colon\mathcal{A}^{\op}\to\lmod {\A}$) defines an $\mathcal{A}$-bimodule 
\[
X_M\colon\mathcal{A}^{\op}\times\mathcal{A}\to\Ab\qquad\text{(resp., $_LX \colon\mathcal{A}^{\op}\times\mathcal{A}\to\Ab$)},\text{ where}
\] 
\begin{itemize}
\item given $a,\, b\in \Ob(\A)$, $X_M(a,b):=(M(b))(a)$ and ${}_LX(a,b):=(L(a))(b)$; 
\item given $(f,g)\colon (a,b)\to (a',b')$ in $\mathcal{A}^{\op}\times\mathcal{A}$, we let $X_M(f,g):=(M(b'))(f)\circ M(g)_a$, while ${}_LX(f,g):=L(f)_b\circ (L(a'))(g)$.
\end{itemize}
This allows us to see $\mathcal{A}$-bimodules either as functors $\mathcal{A}\to\mod \A$ or $\mathcal{A}^{\op}\to\lmod {\A}$.

\begin{lemma} \label{lemma_traces_are_functorial_bimodules}
Let $\A$ be a preadditive category, $X$ an $\mathcal{A}$-bimodule and $\mathcal{S}$ a class of right $\mathcal{A}$-modules. The assignment $b\mapsto \tr_\mathcal{S}(X_b)$ defines a subfunctor of the functor $b\mapsto X_b$ defined in the above discussion. The associated $\mathcal{A}$-bimodule, denoted by $\tr_\mathcal{S}(X)$, is then a sub-$\mathcal{A}$-bimodule of $X$.
 \end{lemma}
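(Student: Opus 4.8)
The plan is to reduce the statement to Lemma~\ref{lemma_traces_are_functorial} together with the dictionary, recalled just above, between additive functors $\A\to\mod \A$ and $\A$-bimodules. Throughout, write $F\colon\A\to\mod \A$ for the functor $b\mapsto X_b$; thus for a morphism $g\colon b\to b'$ in $\A$ the natural transformation $F(g)\colon X_b\to X_{b'}$ has component at $a\in\Ob(\A)$ the homomorphism $X(\id_a,g)\colon X(a,b)\to X(a,b')$, and this is indeed natural precisely because $X$ is a bifunctor.

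First I would show that $b\mapsto\tr_{\mathcal S}(X_b)$ is a subfunctor of $F$. For each $b$ one has the submodule $\tr_{\mathcal S}(X_b)\leq X_b$, and for a morphism $g\colon b\to b'$ in $\A$, Lemma~\ref{lemma_traces_are_functorial} applied to the morphism $F(g)\colon X_b\to X_{b'}$ of $\mod \A$ yields $F(g)\big(\tr_{\mathcal S}(X_b)\big)\leq\tr_{\mathcal S}(X_{b'})$; hence $F(g)$ restricts to a morphism $\tr_{\mathcal S}(X_b)\to\tr_{\mathcal S}(X_{b'})$ in $\mod \A$. Because the inclusions $\tr_{\mathcal S}(X_b)\hookrightarrow X_b$ are monomorphisms and $F$ is an additive functor, these restrictions are automatically additive and compatible with identities and composition, so $b\mapsto\tr_{\mathcal S}(X_b)$ is an additive functor $\A\to\mod \A$ equipped with a natural monomorphism into $F$, i.e.\ a subfunctor.

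Next I would transport this subfunctor along the dictionary: it corresponds to the $\A$-bimodule $\tr_{\mathcal S}(X)$ with $\tr_{\mathcal S}(X)(a,b)=\tr_{\mathcal S}(X_b)(a)$ and morphism part given by the recipe $X_M(f,g)=(M(b'))(f)\circ M(g)_a$ applied to $M=\big(b\mapsto\tr_{\mathcal S}(X_b)\big)$. To see that $\tr_{\mathcal S}(X)$ is a sub-$\A$-bimodule of $X$ I check the two defining conditions. The first is immediate, since $\tr_{\mathcal S}(X_b)(a)$ is a subgroup of $X_b(a)=X(a,b)$. For the second, given a morphism $(\alpha,\beta)\colon(a,b)\to(a',b')$ in $\A^{\op}\times\A$, I factor $X(\alpha,\beta)=X(\id_{a'},\beta)\circ X(\alpha,\id_b)$: the factor $X(\alpha,\id_b)$ is the action of $\alpha$ on the right $\A$-module $X_b$, hence restricts to the submodule $\tr_{\mathcal S}(X_b)$, and the factor $X(\id_{a'},\beta)$ is the component at $a'$ of $F(\beta)$, hence restricts to the trace subfunctor by the previous step. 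Thus $X(\alpha,\beta)$ restricts to $\tr_{\mathcal S}(X)$, and matching this composite against the recipe for $X_M$ shows that the restriction equals $\tr_{\mathcal S}(X)(\alpha,\beta)$. The only point that is not completely formal is exactly this last matching: one must unwind that the functor-to-bimodule construction builds $X(\alpha,\beta)$ as precisely such a composite of a transition map of $F$ with a map coming from the right-module structure of a single $X_b$, so that the two restrictions obtained above assemble into the restriction of $X(\alpha,\beta)$. This is a bookkeeping check rather than a genuine obstacle, the real content being the appeal to Lemma~\ref{lemma_traces_are_functorial} in the first step.
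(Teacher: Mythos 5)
Your proposal is correct and follows essentially the same route as the paper: the key step in both is to apply Lemma~\ref{lemma_traces_are_functorial} to the morphism $X(-,\beta)\colon X_b\to X_{b'}$ in $\mod\A$ to see that the trace submodules are preserved. The paper dismisses the remaining bookkeeping (functoriality of the restrictions and the sub-bimodule axioms) as routine, which is exactly the part you spell out via the factorization $X(\alpha,\beta)=X(\id_{a'},\beta)\circ X(\alpha,\id_b)$.
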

\begin{proof}
If $\beta\colon b\rightarrow b'$ is a morphism in $\mathcal{A}$, then $X_\beta:=X(-,\beta)\colon X_b\to  X_{b'}$ is a morphism in $\mod \A$. Since $\tr_\mathcal{S}\colon\mod \A\to \mod \A$ is a subfunctor of the identity, it follows that $X_\beta (\tr_\mathcal{S}(X_b))\leq\tr_\mathcal{S}(X_{b'})$, so we get an induced morphism $\tr_\mathcal{S}(X_\beta )\colon\tr_\mathcal{S}(X_b)\to  \tr_\mathcal{S}(X_{b'})$ in $\mod \A$. We define $\tr_\mathcal{S}(X_\beta)$ to be the image of $\beta$ by the desired functor $\mathcal{A}\to \mod \A$. The rest of the proof is routine. 
\end{proof}

Given an $\A$-bimodule $X$ and a class of modules $\S$, the $\A$-bimodule $\tr_\S(X)$ is called the {\bf trace of $\mathcal{S}$ on the $\mathcal{A}$-bimodule $X$}.

\subsection{Locally coherent categories of modules}\label{subs_locally_coh}

Given a  small preadditive category $\A$, a right $\A$-module $M$ is said to be {\bf finitely presented} if the functor $\hom_\A(M,-)\colon\mod \A\to \Ab$ commutes with direct limits. As a consequence of Coro.\,\ref{application_yoneda}, one can deduce (exactly as one does for categories of modules over a unitary ring) that the category $\mod \A$ is  {\bf locally finitely presented}, that is, any right $\A$-module can be written as a direct limit of finitely presented modules. In what follows we go one step further and characterize those categories $\A$ for which $\mod \A$ is also locally coherent, that is, we give a necessary and sufficient condition for $\fpmod \A$ (the category of finitely presented modules) to be closed under taking kernels in $\mod \A$. 

\smallskip
Recall that, given a preadditive category $\C$ and a morphism $\phi\colon X\to Y$ in $\C$, a morphism $\psi\colon K\to X$ in $\C$ is said to be a {\bf pseudo-kernel} of $\phi$ if, for any $Z\in \Ob(\C)$, the following sequence of Abelian groups is exact:
\[
\xymatrix@C=15pt{
{\C}(Z,K)\ar[rr]^{(Z,\psi)}&&{\C}(Z,X)\ar[rr]^{(Z,\phi)}&&{\C}(Z,Y).
} 
\]
{\bf Pseudo-cokernels} are defined dually. Let us remark that any Abelian or triangulated category has pseudo-kernels and pseudo-cokernels.  Pseudo-kernels have been introduced, under the name of ``weak kernels'', by Freyd \cite{Freyd}.
%

\begin{corollary} \label{lem.locally coherent module categories}
Let $\A$ be a  small preadditive category. The following  are equivalent:
\begin{enumerate}[\rm (1)]
\item $\mod{\A}$ is a locally coherent Grothendieck category;
\item the subcategory $\proj(\A)$($\cong \widehat\A_{\oplus}$) of $\mod{\A}$ has pseudo-kernels;
\item the additive closure $\widehat{\A}$ has pseudo-kernels. 
\end{enumerate}
\end{corollary}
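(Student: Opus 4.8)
The plan is to establish the cycle $(1)\Rightarrow(2)\Rightarrow(1)$ together with $(2)\Leftrightarrow(3)$. Throughout I use that, by Coro.\,\ref{application_yoneda} and \ref{ADD_morita}, $\proj(\A)=\add(H_a:a\in\Ob(\A))\cong\widehat\A_\oplus$ is a family of finitely generated projective generators of the Grothendieck category $\mod\A$, closed under finite coproducts and summands; that every finitely presented module $M$ sits in an exact sequence $P_1\xrightarrow{u}P_0\xrightarrow{p}M\to 0$ with $P_0,P_1\in\proj(\A)$; and that $\mod\A$ is locally finitely presented, so $(1)$ amounts to saying that $\fpmod\A$ is closed under kernels in $\mod\A$. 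The key auxiliary remark, used twice, is: if $\phi\colon X\to Y$ is a morphism in $\proj(\A)$ and $w\colon R\to X$ is a pseudo-kernel of $\phi$ \emph{computed inside} $\proj(\A)$, then $w$ corestricts to an epimorphism $R\twoheadrightarrow\Ker(\phi)$ in $\mod\A$. Indeed $\Im(w)\subseteq\Ker(\phi)$ since $\phi w=0$; conversely, choosing an epimorphism $\coprod_i H_{a_i}\twoheadrightarrow\Ker(\phi)$ (the $H_a$'s generate), each component $H_{a_i}\to\Ker(\phi)\hookrightarrow X$ is killed by $\phi$ and $H_{a_i}\in\proj(\A)$, hence factors through $w$; patching these factorizations shows $R\to\Ker(\phi)$ is onto.

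For $(1)\Rightarrow(2)$: given $f\colon P\to Q$ in $\proj(\A)$, the kernel $K$ of $f$ in $\mod\A$ is finitely presented by $(1)$, hence finitely generated, so there is an epimorphism $\sigma\colon P'\twoheadrightarrow K$ with $P'\in\proj(\A)$; the composite $P'\xrightarrow{\sigma}K\hookrightarrow P$ is then a pseudo-kernel of $f$ in $\proj(\A)$, because for $T\in\proj(\A)$ left exactness of $\hom_\A(T,-)$ identifies $\hom_\A(T,K)$ with $\Ker(\hom_\A(T,P)\to\hom_\A(T,Q))$, while projectivity of $T$ makes $\hom_\A(T,P')\to\hom_\A(T,K)$ surjective.

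For $(2)\Rightarrow(1)$: let $f\colon M\to N$ be a morphism between finitely presented modules, fix presentations $P_1\xrightarrow{u}P_0\xrightarrow{p}M\to 0$ and $Q_1\xrightarrow{v}Q_0\xrightarrow{q}N\to 0$ with all terms in $\proj(\A)$, and lift $f$ to $g_0\colon P_0\to Q_0$ with $qg_0=fp$ (possible as $P_0$ is projective). Set $L:=\Ker(fp)=p^{-1}(\Ker f)$, so that $\Ker f=L/\Im u$; since $\Im v=\Ker q$, one gets $L=g_0^{-1}(\Im v)=\mathrm{pr}_1\bigl(\Ker([g_0,-v]\colon P_0\oplus Q_1\to Q_0)\bigr)$. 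Using $(2)$, choose a pseudo-kernel $w\colon R\to P_0\oplus Q_1$ of $[g_0,-v]$ in $\proj(\A)$; by the auxiliary remark $\delta:=\mathrm{pr}_1\circ w\colon R\to P_0$ has image exactly $L$, and $\Ker\delta$ is the kernel of the induced epimorphism $R\twoheadrightarrow L$. Now $\delta$ is itself a morphism of $\proj(\A)$, so a \emph{second} application of $(2)$ produces a pseudo-kernel $s\colon R'\to R$ of $\delta$ in $\proj(\A)$ with $\Im s=\Ker\delta$ (auxiliary remark again); hence $L\cong\Coker(s)$ is finitely presented. Finally, lifting $u$ along $R\twoheadrightarrow L$ to $\bar u\colon P_1\to R$, a short diagram chase identifies $\Ker f=L/\Im u$ with $\Coker\bigl([s,\bar u]\colon R'\oplus P_1\to R\bigr)$, which is again finitely presented; thus $\fpmod\A$ is closed under kernels in $\mod\A$.

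For $(2)\Leftrightarrow(3)$: since $\widehat\A_\oplus$ is the idempotent completion of $\widehat\A$ and $\proj(\A)\cong\widehat\A_\oplus$, this reduces to the routine fact that an additive category $\mathcal C$ has pseudo-kernels iff $\mathcal C_\oplus$ does — in one direction multiply a pseudo-kernel $c\to a$ of $\phi$ in $\mathcal C$ by the appropriate idempotents to obtain a pseudo-kernel of $\phi\colon(a,r)\to(b,s)$ in $\mathcal C_\oplus$, and in the other observe that a pseudo-kernel of $\phi\colon a\to b$ in $\mathcal C_\oplus$ is already tested by objects of $\mathcal C$ and so restricts to a pseudo-kernel in $\mathcal C$. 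The one genuinely delicate point of the whole argument is the double use of the hypothesis in $(2)\Rightarrow(1)$: the first pseudo-kernel only exhibits $L$ as an epimorphic image of an object of $\proj(\A)$, and it is the second one, applied to the honest $\proj(\A)$-morphism $\delta=\mathrm{pr}_1\circ w$, that upgrades this to a genuine projective \emph{presentation} of $L$; the auxiliary remark — identifying the pseudo-kernel in $\proj(\A)$ with a surjection onto the actual kernel in $\mod\A$ — is exactly what keeps this bookkeeping honest.
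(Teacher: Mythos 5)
Your argument is correct, but it takes a genuinely different route from the paper on the one nontrivial implication. The paper proves $(1)\Rightarrow(2)$ and $(2)\Rightarrow(3)$ exactly as you do for $(1)\Rightarrow(2)$ — cover the kernel (resp.\ the pseudo-kernel object) by a finite coproduct of representables and compose — and then disposes of $(3)\Rightarrow(1)$ by citing Freyd's Lemma~1.4.5. You instead prove $(2)\Rightarrow(1)$ directly: your auxiliary remark (a pseudo-kernel computed in $\proj(\A)$ corestricts to an epimorphism onto the honest kernel in $\mod\A$, because the $H_a$'s generate and are projective) is exactly the right bridge, the identification $L=\mathrm{pr}_1\bigl(\Ker([g_0,-v])\bigr)$ is the standard pullback trick, and the double application of $(2)$ correctly upgrades $L$ from ``finitely generated'' to ``finitely presented'' before you quotient by $\Im u$. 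In effect you have reproved Freyd's lemma in this setting; the paper's version is shorter but rests on an external reference, while yours is self-contained and shows explicitly where finite presentability of $\Ker f$ comes from. One small imprecision in your $(2)\Leftrightarrow(3)$: in the direction descending from $\mathcal{C}_\oplus$ to $\mathcal{C}$ (which is the one needed for $(2)\Rightarrow(3)$), the pseudo-kernel object $(c,e)$ lives in $\mathcal{C}_\oplus$ and need not lie in $\mathcal{C}$, so it does not merely ``restrict'': you must precompose with the split epimorphism $(c,\id_c)\twoheadrightarrow(c,e)$ — equivalently, as the paper does, with an epimorphism from a finite coproduct of representables onto the pseudo-kernel object. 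That is an immediate fix and does not affect the validity of the proof.
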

\begin{proof}
(1)$\Rightarrow$(2). Let $\phi\colon P\to Q$ be morphism in $\proj(\A)$. Being $\mod{\A}$ locally coherent, $\Ker(\phi)\in\fpmod\A$. Consider an epimorphism $\pi\colon \coprod_{i=1}^nH_{a_i}\to \Ker(\phi)$, with $n\in \N$ and $a_i\in \Ob(\A)$; it is not difficult to prove that the composition $\psi\colon \coprod_{i=1}^nH_{a_i}\to \Ker(\phi)\to P$ is a pseudo-kernel of $\phi$ in $\proj(\A)$.

\smallskip\noindent
(2)$\Rightarrow$(3). By Coro.\,\ref{application_yoneda} we can identify $\widehat{\A}$ with the full subcategory of $\proj(\A)$ of the objects of the form $\coprod_{i=1}^nH_{a_i}$, with $n\in \N$ and $a_i\in \Ob(\A)$. Consider then a morphism $\phi\colon \coprod_{i=1}^n H_{a_i}\to \coprod_{j=1}^m H_{b_j}$ and a pseudo-kernel $\psi\colon K\to \coprod_{i=1}^n H_{a_i}$ of $\phi$ in $\proj(\A)$. Take  an epimorphism $\pi\colon \coprod_{l=1}^{n'}H_{c_l}\to K$, with $n'\in \N$ and $c_l\in \Ob(\A)$; it is not difficult to prove that the composition $\psi\colon \coprod_{l=1}^{n'}H_{c_l}\to K\to\coprod_{i=1}^nH_{a_i}$ is a pseudo-kernel in $\widehat\A$.

\smallskip\noindent
(3)$\Rightarrow$(1) can be proved as in \cite[Lem.\,1.4.5]{Freyd}.
\end{proof}

\subsection{The center of a preadditive category}\label{subs_centre}

Recall the following definition from \cite{Ga}: the {\bf center} $Z(\A)$ of a preadditive category $\A$ is the ring of self-natural transformations of the identity functor $\id_{\A}$, that is, 
\[
Z(\A):=(\Padd(\A,\A))(\id_\A,\id_\A),
\]
where the above formula just means that, in the $2$-category $\Padd$, we consider the category of endomorphisms $\Padd(\A,\A)$ and, in that category, we take the endomorphism ring of the object $\id_\A$.

\smallskip
It is an exercise on the definitions to verify that, given a unitary ring $R$, the commutative ring $Z(R)$ is isomorphic to the subring $\{r\in R:rs=sr,\,\forall s\in R\}$, which is usually called the center of $R$. On the other hand, given a small preadditive category $\A$, we can consider both $Z(\A)$ and $Z(\mod \A)$. In the following proposition we show that both choices give the same ring:

\begin{proposition} \label{prop.Z(A)-vs-Z(Mod-A)}
Given a small preadditive category $\A$, $Z(\A)\cong Z(\mod \A)$.
\end{proposition}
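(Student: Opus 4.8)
The plan is to exhibit mutually inverse ring homomorphisms between $Z(\A)$ and $Z(\mod\A)$ using the Yoneda embedding $y\colon\A\to\mod\A$, $a\mapsto H_a$. First I would recall that $Z(\mod\A)$ is, by definition, the endomorphism ring of $\id_{\mod\A}$ in the category of endofunctors of $\mod\A$; concretely, a central element is a collection $\eta=(\eta_M\colon M\to M)_{M\in\mod\A}$ natural in $M$, and the ring structure is given by composition (equivalently, by $\eta_M\circ\eta'_M$, which equals $\eta'_M\circ\eta_M$ by naturality). Similarly an element of $Z(\A)$ is a family $\zeta=(\zeta_a\colon a\to a)_{a\in\Ob(\A)}$ with $\zeta_b\circ r=r\circ\zeta_a$ for every $r\colon a\to b$ in $\A$.

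The map $Z(\mod\A)\to Z(\A)$ is restriction along $y$: send $\eta$ to the family $\big((\eta_{H_a})_a\big)_{a\in\Ob(\A)}$, where I use that by the (additive) Yoneda Lemma $\hom_\A(H_a,H_a)\cong(H_a)(a)=\A(a,a)$, so $\eta_{H_a}$ corresponds to a unique endomorphism $\zeta_a\colon a\to a$; naturality of $\eta$ with respect to the induced maps $H_a\to H_b$ (which, again by Yoneda, are exactly the maps $r\circ-$ for $r\colon a\to b$) translates precisely into the centrality condition $\zeta_b r=r\zeta_a$. For the other direction, given $\zeta\in Z(\A)$ I would define, for each $M\in\mod\A$ and each $a\in\Ob(\A)$, the component $\eta_M(a):=M(\zeta_a)\colon M(a)\to M(a)$. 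One checks $\eta_M$ is a morphism of $\A$-modules using functoriality of $M$ together with the relation $\zeta_b r=r\zeta_a$, and that $\eta=(\eta_M)_M$ is natural in $M$ because a morphism $\phi\colon M\to N$ is componentwise compatible with the action of morphisms of $\A$. It is then routine that both assignments are ring homomorphisms (composition on either side corresponds to composition/pointwise multiplication in $Z(\A)$) and that they are mutually inverse: starting from $\zeta$, restricting the resulting $\eta$ along $y$ gives back $(H_a)(\zeta_a)=(\zeta_a\circ-)$, which under the Yoneda identification is $\zeta_a$ again; conversely, starting from $\eta\in Z(\mod\A)$, the constructed extension agrees with $\eta$ on all representables, hence on all finite coproducts of representables, hence — since $\mod\A$ is generated by representables and every module is a direct limit (even a cokernel of a map between coproducts) of representables, and both $\eta$ and its reconstruction are natural — on all of $\mod\A$.

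The main obstacle is the surjectivity/uniqueness part of the last step: showing that a central natural endomorphism of $\id_{\mod\A}$ is determined by its values on the representables. This is where I would invoke Coro.\,\ref{application_yoneda} and Lemm.\,\ref{mod_is_groth}: every $M$ fits in an exact sequence $\coprod H_{a_j}\to\coprod H_{a_i}\to M\to 0$ with the $H$'s representable, and naturality of a central endomorphism with respect to this presentation, combined with the fact that its components on $\coprod H_{a_i}$ and $\coprod H_{a_j}$ are forced (additivity plus the representable case), pins down $\eta_M$ as the induced map on cokernels. The remaining verifications — that each displayed family really is a natural transformation, and that the two constructions respect addition and composition — are straightforward diagram chases that I would carry out but not belabor here.
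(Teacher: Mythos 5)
Your proposal is correct and uses exactly the same two maps as the paper: restriction along the Yoneda embedding in one direction, and $\eta_M(a):=M(\zeta_a)$ in the other (the paper simply leaves the verification that they are mutually inverse as an exercise, whereas you spell out the reduction to representables via presentations). No gaps; this is the paper's argument.
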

\begin{proof}
Consider the following maps:
\[
\Phi\colon Z(\A)\to Z(\mod \A)\quad\text{and}\quad\Psi\colon Z(\mod \A)\to Z(\A),
\]
such that, given $\alpha\colon\id_\A\to \id_\A$ and $M\in \mod \A$, we define $\Phi(\alpha)_M\colon M\to M$ as follows: $\Phi(\alpha)_{M,a}:=M(\alpha_a)\colon M(a)\to M(a)$, for all $a\in \Ob(\A)$. On the other hand, given $\beta\colon \id_{\mod \A}\to \id_{\mod \A}$ and $a\in \A$, we let $\Psi(\beta)_a\colon a\to a$ be the unique morphism such that $\A(-,\Psi(\beta)_a)=\beta_{H_a}\colon H_a\to H_a$. It is now an easy exercise to verify that $\Phi$ and $\Psi$ are each other inverse.
\end{proof}

As a consequence of the above proposition, one obtains that $Z(\A)\cong Z(\widehat{\A})\cong Z(\widehat{\A}_\oplus)$. To see this, one can use that $\A$, $\widehat \A$ and $\widehat{\A}_\oplus$ are Morita equivalent categories. More generally, this result shows that the center of a small preadditive category is invariant under Morita equivalence.


\section{Ideals of preadditive categories}\label{subs_tors_and_id}

The section is organized as follows: we start recalling the definition and some basic facts about (two-sided) ideals of a preadditive categories in Sec.\,\ref{Ideals and quotient categories}; we then specialize the discussion to idempotent ideals in Sec.\ref{Idempotent ideals and traces of projectives} and, between them, we give some equivalent characterizations of idempotent ideals that are traces of projective modules; finally, we show in Sec.\,\ref{subs_directsummands} that the direct sum decompositions of the regular bimodule are all induced by central idempotents of $\A$.

\subsection{Ideals and quotient categories}\label{Ideals and quotient categories}

Let $\A$ be a  small preadditive category, a ({\bf two-sided}) {\bf ideal} of $\A$ is a subfunctor 
\[
\I(-,-)\leq \A(-,-)\colon \A^{\op}\times \A\to \Ab.
\] 
That is, given $a,\,a',\,b,\,b'\in \Ob(\A)$, and $f\in \I(a,b)$, $r\in \A(a',a)$ and $l\in \A(b,b')$, the composition $l\circ f\circ r$ belongs in the subgroup $\I(a',b')\leq \A(a',b')$. Notice that one can equivalently describe ideals of $\A$ as sub-$\A$-bimodules of the regular bimodule $\A(-,-)$.

Given an ideal $\I$ of $\A$, we can form a new {\bf quotient category} $\A/\I$ with objects $\Ob(\A/\I)=\Ob(\A)$ and morphisms defined by
\[
(\A/\I)(a,b):=\A(a,b)/\I(a,b),
\] 
for all $a,\,b\in \Ob(\A)$, with identities and composition law induced by the ones of $\A$. Of course there is a natural functor $\pi_\I\colon \A \to \A/\I$, that induces a restriction of scalars:
\[
(\pi_\I)_*\colon \mod{\A/\I}\to \mod \A\qquad\text{such that}\qquad M\mapsto M\circ\pi_\I.
\]
As for the case when $\A$ is a ring, one can give a very explicit description of the extension of scalars $(\pi_\I)^*$ (the left adjoint to $(\pi_\I)_*$). Indeed, given $M\in\mod {\A}$, we define a subfunctor $M\I\colon \A^{\op}\to \Ab$ of $M$ such that
\[
M\I(a):=\sum_{b\in\Ob(\A),\,\alpha\in\I(a,b)}\Im(M(\alpha)).
\]
Then, $(\pi_\I)^*(M)\cong M/M\I$. Let us remark that, by construction, $H_a\I=\I(-,a)$ for each $a\in\Ob(\A)$, so
\begin{equation}\label{extending_representables_eq}
(\pi_\I)^*(H_a)\cong H_a/H_a\I\cong H_a/\I(-,a).
\end{equation}

\begin{lemma}
Let $\A$ be a  small preadditive category and let $\I$ be an ideal. The class of right $\A$-modules $M$ such that $M\I=0$ coincides with 
\[
\Gen\{H_a/\I(-,a):a\in\Ob(\A)\},
\] 
that is, with those objects that can be written as a quotient of a coproduct of modules, each isomorphic to some of the $H_a/\I(-,a)$'s. Furthermore, the full subcategory $\Gen\{H_a/\I(-,a):a\in\Ob(\A)\}$ of $\mod \A$ is equivalent to $\mod \A/\I$.
\end{lemma}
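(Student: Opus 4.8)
The plan is to prove two things: first, that the class $\mathcal{C}_\I:=\{M\in\mod\A:M\I=0\}$ equals $\Gen\{H_a/\I(-,a):a\in\Ob(\A)\}$; and second, that $\mathcal{C}_\I$ is equivalent to $\mod\A/\I$ via the restriction of scalars $(\pi_\I)_*$. For the first statement, one inclusion is almost immediate: each $H_a/\I(-,a)\cong(\pi_\I)^*(H_a)$ lies in the essential image of $(\pi_\I)_*$ by \eqref{extending_representables_eq}, and since $(\pi_\I)_*$ is exact and coproduct-preserving (it is a restriction of scalars, as discussed for a general additive functor $\phi$), this essential image is closed under coproducts and quotients; hence every object of $\Gen\{H_a/\I(-,a):a\in\Ob(\A)\}$ satisfies $M\I=0$. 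Conversely, if $M\I=0$, I would pick a presentation $\coprod_j H_{a_j}\twoheadrightarrow M$ (which exists since the $H_a$'s generate $\mod\A$ by Lemma~\ref{mod_is_groth} and Coro.~\ref{application_yoneda}) and observe that applying $(\pi_\I)^*$ and using $M\I=0$ forces the epimorphism to factor through $\coprod_j H_{a_j}\I(-,a_j)=\coprod_j(\pi_\I)^*(H_{a_j})$; more concretely, $M\I=0$ means each component map $H_{a_j}\to M$ kills $\I(-,a_j)$, so factors through $H_{a_j}/\I(-,a_j)$, exhibiting $M$ as a quotient of $\coprod_j H_{a_j}/\I(-,a_j)$.

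For the equivalence $\mathcal{C}_\I\simeq\mod\A/\I$, the natural candidate functor is $(\pi_\I)_*\colon\mod\A/\I\to\mod\A$, whose essential image I claim is exactly $\mathcal{C}_\I$. That the image lands in $\mathcal{C}_\I$ is clear: if $N\in\mod\A/\I$ then for $\alpha\in\I(a,b)$ the morphism $N(\pi_\I(\alpha))=N(0)=0$, so $((\pi_\I)_*N)\I=0$. The essential surjectivity onto $\mathcal{C}_\I$ follows because any $M$ with $M\I=0$ has the property that $M(\alpha)=0$ for every $\alpha\in\I(a,b)$ (since $\Im(M(\alpha))\leq M\I(a)=0$), so $M$ factors through $\pi_\I$ as a functor $\A^{\op}\to\Ab$, i.e. $M\cong(\pi_\I)_*(N)$ for a unique $N\in\mod\A/\I$. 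Finally, $(\pi_\I)_*$ is full and faithful on all of $\mod\A/\I$: a natural transformation $N\to N'$ of $\A/\I$-modules is the same data as a natural transformation $(\pi_\I)_*N\to(\pi_\I)_*N'$ of $\A$-modules, because naturality with respect to morphisms of $\A/\I$ is equivalent to naturality with respect to morphisms of $\A$ (the generators of $\Hom$-groups in $\A/\I$ are images of those in $\A$), and the component groups $N(a)=((\pi_\I)_*N)(a)$ are literally the same. Hence $(\pi_\I)_*$ restricts to an equivalence $\mod\A/\I\xrightarrow{\ \sim\ }\mathcal{C}_\I$.

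The one point requiring a little care — and the main (mild) obstacle — is the passage in the first half from ``$M(\alpha)=0$ for all $\alpha\in\I$'' to an actual presentation by the $H_a/\I(-,a)$; one must check that the kernel of a chosen projective presentation $\coprod_j H_{a_j}\twoheadrightarrow M$ contains $\coprod_j\I(-,a_j)$, which is exactly the pointwise statement that each structure map $H_{a_j}\to M$ annihilates $\I(-,a_j)$, and this is precisely the hypothesis $M\I=0$ unwound through the Yoneda identification $\hom_\A(H_{a_j},M)\cong M(a_j)$. Everything else is routine bookkeeping with restriction/extension of scalars and the description of $(\pi_\I)^*$ given just before the statement.
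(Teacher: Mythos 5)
Your proof is correct and, for the main claim, follows essentially the same route as the paper: one inclusion because each $H_a/\I(-,a)$ is annihilated by $\I$ (and this property passes to coproducts and quotients), and the converse by factoring each component $H_{a_j}\to M$ of a projective presentation through $H_{a_j}/\I(-,a_j)$ via Yoneda. You additionally spell out the ``furthermore'' clause (that $(\pi_\I)_*$ is fully faithful with essential image $\{M:M\I=0\}$), which the paper's proof leaves implicit; your argument there is correct, though the phrase deducing closure of the essential image under quotients from exactness of $(\pi_\I)_*$ is better replaced by the direct observation that $Q(\alpha)\circ\phi_b=\phi_a\circ M(\alpha)=0$ with $\phi_b$ surjective forces $Q(\alpha)=0$.
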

\begin{proof}
Let us start by noticing that $\left(H_a/\I(-,a)\right)\I=0$ for all $a\in \Ob(\A)$, so that, if $M\in \Gen\{H_a/\I(-,a):a\in\Ob(\A)\}$, then $M\I=0$. On the other hand, suppose $M\I=0$ and consider an epimorphism 
\[
\phi=(\phi_i)_{I}\colon\coprod_{i\in I}H_{a_i}\to M\to 0.
\] 
Since $M\I=0$, each $\phi_i$ factors as $\phi_i=\psi_i\pi_i$, where $\pi_i\colon H_{a_i}\to H_{a_i}/\I(-,a_i)$ is the  obvious projection. Then the epimorphism 
\[
\psi:=(\psi_i)_{I}\colon\coprod_{i\in I}H_{a_i}/\I(-,a_i)\to M
\] 
shows that $M\in \Gen\{H_a/\I(-,a):a\in\Ob(\A)\}$. 
\end{proof}

Given a unitary ring, one can always construct the two-sided ideal generated by a given family of elements. The analogous construction in the more general setting of preadditive categories is given in the following definition:

\begin{definition} \label{def.principal ideal}
Let $\alpha \colon x\to  y$ be a morphism in $\A$ and let $\mathcal{M}$ be a set of morphisms. Then define:
\begin{itemize}
\item the ({\bf two-sided}) {\bf ideal of $\mathcal{A}$ generated by $\alpha$}, denoted by $\mathcal{A}\alpha\mathcal{A}\colon \A^{\op}\times \A\to \Ab$, is the subfunctor of $\A(-,-)$ such that $(\mathcal{A}\alpha\mathcal{A})(a,b)$ is the subgroup of $\mathcal{A}(a,b)$ generated by compositions $\delta\circ\alpha\circ\gamma$, where $\gamma\in\mathcal{A}(a,x)$ and $\delta\in\mathcal{A}(y,b)$;
\item the ({\bf two-sided}) {\bf ideal of $\mathcal{A}$ generated by $\mathcal{M}$}, denoted by $\mathcal{A}\mathcal{M}\mathcal{A}\colon \A^{\op}\times \A\to \Ab$, is the sum 
\[
\mathcal{A}\mathcal{M}\mathcal{A}:=\sum_{\mu\in\mathcal{M}}\mathcal{A}\mu\mathcal{A}.
\] 
That  is, $(\mathcal{A}\mathcal{M}\mathcal{A})(a,b)=\sum_{\mu\in\mathcal{M}}(\mathcal{A}\mu\mathcal{A})(a,b)$, where the sum in the second member of the last equality is the sum of subgroups of the Abelian group $\mathcal{A}(a,b)$. 
\end{itemize}
\end{definition}
For a set of morphisms $\M$ in $\A$, a general element in $(\mathcal{A}\mathcal{M}\mathcal{A})(a,b)$ is a finite sum:
\begin{equation}\label{general_form_of_ideal_generated}
\xymatrix@R=-5pt@C=12pt{
&&&&x_1\ar[rrrr]^{m_1}&&&&y_1\ar[ddrrrr]^{\delta_1}\\
&&&&&&+\\
a\ar[uurrrr]^{\gamma_1}\ar[ddrrrr]_{\gamma_k}&&&&&&\vdots&&&&&&b\\
&&&&&&+\\
&&&&x_k\ar[rrrr]_{m_k}&&&&y_k\ar[uurrrr]_{\delta_k}
}
\end{equation}
where $m_1,\dots,m_k\in \M$. 

\medskip
In the following lemma we study the invariance of the set of two-sided ideals in $\A$ under Morita equivalence:

\begin{lemma}
Let $\A$ and $\B$ be two Morita equivalent small preadditive categories. Then,
\begin{enumerate}[\rm (1)]
\item there is a bijection between the ideals of $\A$ and those of its Cauchy completion $\widehat \A_{\oplus}$;
\item there is a bijection between the sets of ideals of $\A$ and $\B$.
\end{enumerate}
\end{lemma}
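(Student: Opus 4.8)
The plan is to prove (1) by exhibiting mutually inverse \emph{restriction} and \emph{extension} operations on ideals along the fully faithful inclusions $\A\hookrightarrow\widehat\A\hookrightarrow\widehat\A_\oplus$, and then to deduce (2) from (1) together with Corollary \ref{ADD_morita} and the elementary fact that ideals are transported along equivalences of preadditive categories. Throughout I use that an ideal of a small preadditive category $\C$ is the same datum as a family of subgroups $\mathcal{J}(c,c')\leq\C(c,c')$, one for each pair $c,c'\in\Ob(\C)$, absorbing pre- and post-composition with arbitrary morphisms of $\C$ (equivalently, a sub-$\C$-bimodule of the regular bimodule $\C(-,-)$).

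For (1), restriction is immediate: if $F\colon\C\to\D$ is a fully faithful additive functor and $\mathcal{J}$ is an ideal of $\D$, then $\mathcal{J}|_\C(c,c'):=\{f\in\C(c,c'):F(f)\in\mathcal{J}(Fc,Fc')\}$ is an ideal of $\C$; I apply this to $\iota\colon\A\to\widehat\A$ and $\iota'\colon\widehat\A\to\widehat\A_\oplus$. For extension I use the explicit descriptions of the additive closure and of the idempotent completion recalled in Section \ref{appendix_on_pre_additive}: an ideal $\I$ of $\A$ extends to the ideal $\widehat\I$ of $\widehat\A$ whose value on $a=(a_1,\dots,a_n)$ and $b=(b_1,\dots,b_m)$ is the set of matrices $(r_{i,j})$ with $r_{i,j}\in\I(a_i,b_j)$ for all $i,j$; and an ideal $\mathcal{K}$ of an additive category $\D$ extends to the ideal $\mathcal{K}_\oplus$ of $\D_\oplus$ whose value on $(d,r)$ and $(d',s)$ is $\{t\in\mathcal{K}(d,d'):t=s\circ t\circ r\}$. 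The extension of $\I$ all the way to $\widehat\A_\oplus$ is then $(\widehat\I)_\oplus$. Since each $\I(a_i,b_j)$ (resp. each $\mathcal{K}(d,d')$) is a subgroup absorbing two-sided composition, row-by-column multiplication of matrices (resp. a short computation using $r^2=r$ and $s^2=s$) keeps us inside $\widehat\I$ (resp. $\mathcal{K}_\oplus$), so these really are ideals; and restriction followed by extension is visibly the identity.

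The one real point is that extension followed by restriction is also the identity, i.e. that every ideal of $\widehat\A$ (resp. of $\widehat\A_\oplus$) is recovered from its restriction. Here the canonical decompositions do the work: every $r\colon a\to b$ in $\widehat\A$ equals $\sum_{i,j}u'_j\circ(v'_j\circ r\circ u_i)\circ v_i$, where $u_i,u'_j$ and $v_i,v'_j$ are the structural injections and projections of $a=\coprod_i(a_i)$ and $b=\coprod_j(b_j)$; and every $t\colon(a,r)\to(b,s)$ in $\widehat\A_\oplus$ equals $s\circ t\circ r$, read as a composite $(a,r)\to(a,\id_a)\xrightarrow{\,t\,}(b,\id_b)\to(b,s)$ that factors through objects in the image of $\widehat\A$. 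Thus if $\mathcal{J}$ is an ideal of $\widehat\A$ (resp. of $\widehat\A_\oplus$) and $r$ (resp. $t$) lies in $\mathcal{J}$, each displayed factor $v'_j\circ r\circ u_i$ (resp. the middle copy of $t$) lies in $\mathcal{J}$, is a morphism in the image of $\A$ (resp. of $\widehat\A$), and reassembles into $r$ (resp. $t$) precisely by the extension recipe; hence $\mathcal{J}$ equals the extension of its restriction, in each of the two steps. The main obstacle is exactly this bookkeeping --- in particular making sure the extended subfunctor is closed under \emph{all} compositions in $\widehat\A_\oplus$, not merely the ``diagonal'' ones --- but the explicit models of Section \ref{appendix_on_pre_additive} reduce it to a routine verification. (More conceptually: by Corollary \ref{application_yoneda} an ideal of $\widehat\A_\oplus$ is nothing but a subfunctor of the inclusion $\proj(\A)\hookrightarrow\mod\A$, and since every object of $\proj(\A)$ is a summand of a finite coproduct of representables $H_a$ with $a\in\Ob(\A)$, such a subfunctor is freely and uniquely determined by its restriction to the $H_a$'s, that is, by an ideal of $\A$ --- which is the same argument in disguise.)

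Finally, for (2): an equivalence $F\colon\C\xrightarrow{\,\sim\,}\D$ of small preadditive categories induces an equivalence $F^{\op}\times F\colon\C^{\op}\times\C\xrightarrow{\,\sim\,}\D^{\op}\times\D$ together with a natural isomorphism of bifunctors $\C(-,-)\cong\D(F-,F-)$, and hence a bijection between sub-bifunctors of $\C(-,-)$ and sub-bifunctors of $\D(-,-)$, i.e. between ideals of $\C$ and ideals of $\D$. By Corollary \ref{ADD_morita}, $\A$ and $\B$ are Morita equivalent precisely when $\widehat\A_\oplus\cong\proj(\A)\cong\proj(\B)\cong\widehat\B_\oplus$; composing the bijection between ideals of $\A$ and ideals of $\widehat\A_\oplus$ from part (1), the bijection between ideals of $\widehat\A_\oplus$ and ideals of $\widehat\B_\oplus$ induced by that equivalence, and the bijection between ideals of $\widehat\B_\oplus$ and ideals of $\B$ from part (1) again, yields the required bijection.
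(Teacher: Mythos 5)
Your proof is correct and follows essentially the same route as the paper's: restriction of ideals along the canonical inclusion $\A\hookrightarrow\widehat\A_\oplus$ paired with an extension (generation) map in the other direction, checked to be mutually inverse, with part (2) then deduced from the characterization of Morita equivalence via Cauchy completions (Corollary \ref{ADD_morita}); you merely split the extension into the two explicit steps $\A\to\widehat\A\to\widehat\A_\oplus$ and carry out the verification that the paper dismisses as ``routine''. One cosmetic slip: in your second and third paragraphs the labels ``restriction followed by extension'' and ``extension followed by restriction'' are interchanged relative to the composites you actually describe, though the underlying arguments for both directions are present and correct.
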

\begin{proof}
(1) Consider the canonical inclusion $\A\to \widehat \A_{\oplus}$ and identify $\A$ as a full subcategory of $\widehat\A_\oplus$. Then there is a map
\[
F\colon\{\text{Ideals of $\A$}\}\to \{\text{Ideals of $\widehat\A_\oplus$}\},\qquad I\mapsto \widehat\A_\oplus I\widehat\A_\oplus,
\]
where $\widehat\A_\oplus I\widehat\A_\oplus$ denotes the ideal of $\widehat\A_{\oplus}$ generated by the union $\bigcup_{a,b\in\Ob(\A)}I(a,b)$. On the other hand, one can construct a map in the opposite direction as follows:
\[
G\colon\{\text{Ideals of $\widehat\A_\oplus$}\}\to \{\text{Ideals of $\A$}\},\qquad J\mapsto J\restriction_{\A^{\op}\times\A}.
\]
It is now routine to check that these maps are each other inverse. 

\smallskip\noindent
(2) This is an application of part (1), just using the fact that $\A$ and $\B$ are Morita equivalent if and only if $\widehat \A_{\oplus}$ is equivalent to $\widehat \B_{\oplus}$.
\end{proof}

%
%
%

\subsection{Idempotent ideals and traces of projectives}\label{Idempotent ideals and traces of projectives}

Given two ideals $\I$ and $\mathcal J$ of $\A$, we define a new ideal $\I\cdot \mathcal J$ as follows:
\[
(\I\cdot \mathcal J)(a,b):=\left\{\sum_{i=1}^n\phi_i\circ \psi_i: \phi_i\in \I(c_i,b),\, \psi_i\in \mathcal J(a,c_i)\right\}.
\]
An ideal $\I$ is said to be {\bf idempotent} if $\I\cdot\I=\I$.

\begin{lemma}
Let $\A$ be a small preadditive category and let $\I$ be an idempotent ideal. The class of right $\A$-modules $M$ such that $M\I=M$ coincides with 
\[
\Gen\{\I(-,a):a\in\Ob(\A)\}.  
\]
That is, with those objects that can be written as a quotient of a coproduct of modules, each isomorphic to some of the $\I(-,a)$'s. 
\end{lemma}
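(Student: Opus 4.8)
The plan is to prove the two inclusions separately, after isolating two elementary facts about the operation $M\mapsto M\I$. First I would record that, because $\I$ is idempotent, $\I(-,a)\I=\I(-,a)$ for every $a\in\Ob(\A)$: using $\I(-,a)=H_a\I$ and unwinding the definition of $M\I$, one gets $(\I(-,a)\I)(c)=\sum_{b,\,\alpha\in\I(c,b)}\{\beta\circ\alpha:\beta\in\I(b,a)\}=(\I\cdot\I)(c,a)=\I(c,a)=\I(-,a)(c)$. Second, I would check that the assignment $M\mapsto M\I$ (a) commutes with coproducts, i.e.\ $(\coprod_iM_i)\I=\coprod_i(M_i\I)$, and (b) transports epimorphisms, i.e.\ if $\phi\colon M\to N$ is an epimorphism then $\phi(M\I)=N\I$. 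Fact (a) holds because co/kernels, images, and sums of submodules in $\mod\A$ are computed componentwise in $\Ab$, where direct limits are exact; fact (b) is the combination of $\phi(M\I)\subseteq N\I$ (valid since $M\mapsto M\I$ is a subfunctor of the identity, exactly as for $\tr_{\mathcal{S}}$ in Lemma \ref{lemma_traces_are_functorial}) with the reverse inclusion, which follows from the surjectivity of each $\phi_b$ together with the naturality square: $\Im(N(\alpha))=\Im(\phi_a\circ M(\alpha))=\phi_a(\Im(M(\alpha)))\subseteq\phi_a((M\I)(a))$.

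With these in hand, suppose $M\in\Gen\{\I(-,a):a\in\Ob(\A)\}$, so there is an epimorphism $\phi\colon\coprod_{i\in I}\I(-,a_i)\to M$. Then, combining (b), (a), and the first fact,
\[
M\I=\phi\Big(\big(\textstyle\coprod_{i\in I}\I(-,a_i)\big)\I\Big)=\phi\Big(\textstyle\coprod_{i\in I}\big(\I(-,a_i)\I\big)\Big)=\phi\Big(\textstyle\coprod_{i\in I}\I(-,a_i)\Big)=M,
\]
the last equality holding because $\phi$ is epic. Conversely, suppose $M\I=M$. Since the representables generate $\mod\A$, choose an epimorphism $\psi\colon\coprod_{i\in I}H_{a_i}\to M$. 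Using (b), (a), and $H_{a_i}\I=\I(-,a_i)$,
\[
M=M\I=\psi\Big(\big(\textstyle\coprod_{i\in I}H_{a_i}\big)\I\Big)=\psi\Big(\textstyle\coprod_{i\in I}\big(H_{a_i}\I\big)\Big)=\psi\Big(\textstyle\coprod_{i\in I}\I(-,a_i)\Big),
\]
so the restriction of $\psi$ to the submodule $\coprod_{i\in I}\I(-,a_i)\hookrightarrow\coprod_{i\in I}H_{a_i}$ is still an epimorphism onto $M$; hence $M\in\Gen\{\I(-,a):a\in\Ob(\A)\}$.

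The argument is short and parallels the proof of the preceding lemma (the case $M\I=0$), with $\I(-,a)$ playing the role that $H_a/\I(-,a)$ plays there. The only point requiring genuine care is fact (b): one must verify both that $M\mapsto M\I$ is a subfunctor of the identity and the reverse inclusion $N\I\subseteq\phi(M\I)$ for an epimorphism $\phi$. Neither is deep, but both rely on the componentwise description of images and of the submodule $M\I$, and on exactness of direct limits in $\mod\A$. Idempotency of $\I$ enters the proof exactly once, through the identity $\I(-,a)\I=\I(-,a)$.
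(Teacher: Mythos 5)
Your proof is correct and takes essentially the same route as the paper's: for the direction $M\I=M\Rightarrow M\in\Gen\{\I(-,a)\}$ both arguments push the submodule $(\coprod_iH_{a_i})\I=\coprod_i\I(-,a_i)$ forward along an epimorphism $\coprod_iH_{a_i}\twoheadrightarrow M$, and for the converse the paper simply asserts what you prove via $\I(-,a)\I=\I(-,a)$, which is indeed the unique point where idempotency of $\I$ is needed. No gaps.
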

\begin{proof}
If $M\in \Gen\{\I(-,a):a\in\Ob(\A)\}$, then  $M\I=M$. On the other hand, suppose $M\I=M$, consider an epimorphism 
$\coprod_{i\in I}H_{a_i}\to M$ 
and let $K:=\Ker(\phi)$. Then
\[
M=M\I\cong \frac{\coprod_{i\in I}H_{a_i}}{K}\I\cong \frac{\coprod_{i\in I}H_{a_i}\I}{K\cap \coprod_{i\in I}H_{a_i}\I}\cong \frac{\coprod_{i\in I}\I(-,{a_i})}{K\cap \coprod_{i\in I}\I(-,{a_i})},
\]
and this last module clearly belongs to $\Gen\{\I(-,a):a\in\Ob(\A)\}$.
\end{proof}

Notice that Lem.\,\ref{lemma_traces_are_functorial_bimodules} applies in particular to the regular bimodule $\mathcal{A}(-,-)$. Hence, given a class of right $\A$-modules $\S$, the trace of $\mathcal{S}$ in the regular bimodule, called the {\bf trace of $\mathcal{S}$ in $\mathcal{A}$}, and denoted by $\tr_\mathcal{S}(\mathcal{A})$, is a two-sided ideal of  $\mathcal{A}$. As for modules over a unital ring, the situation when $\mathcal{S}$ consists of projective $\mathcal{A}$-modules deserves a special attention:

\begin{lemma} \label{lem.trace-of-projectives is idempotent}
Let $\A$ be a small preadditive category and let $\S$ be a class of right $\mathcal{A}$-modules. Then,
\begin{enumerate}[\rm (1)]
\item there is a (small) subset $\S'$ of $\S$ such that $\tr_\mathcal{S}(\mathcal{A})=\tr_{T}(\mathcal{A})$, where $T:=\coprod_{\S'}S$;
\item for each $a,\, b\in\Ob(\A)$, the group $\tr_T(\mathcal{A})(a,b)$ consists of the morphisms $\gamma\in\A(a,b)$ such that the map $H_\gamma\colon H_a\to H_b$ factors through a finite coproduct of copies of $T$;
%
\item if each $S$ in $\S$ is projective, then the ideal $\tr_\mathcal{S}(\mathcal{A})$ is idempotent.
\end{enumerate}
Summarizing, if $\S$ is a class of projectives, then $\tr_\mathcal{S}(\mathcal{A})$ is an idempotent ideal such that, for each $a,\,b\in\Ob(\A)$, $\tr_\mathcal{S}(\mathcal{A})(a,b)$ consists of those $\gamma\in\A(a,b)$ such that the map $H_\gamma$ factors through a finite coproduct of objects in $\S$.
\end{lemma}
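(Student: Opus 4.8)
The plan is to prove the three items in order, since (2) feeds into (3), and then observe that the final summary sentence is just the conjunction of (1)--(3). For item (1), I would use that $\mod\A$ is a Grothendieck category (Lemm.\,\ref{mod_is_groth}), so it is well-powered: the set $\{\tr_{\coprod_{\S''}S}(\A) : \S''\subseteq\S \text{ finite}\}$ is a set of subobjects of the regular bimodule $\A(-,-)$, directed under inclusion, and its union is $\tr_\S(\A)$ (every map $S\to\A_b$ with $S\in\S$ uses only one object of $\S$ at a time, so $\tr_\S(\A_b)=\sum_{S\in\S}\tr_S(\A_b)$ is a directed union of the finite partial sums). By well-poweredness this directed family of subobjects has only a set of distinct members, so one can choose a countable-or-smaller cofinal chain, hence a single small subset $\S'\subseteq\S$ with $\tr_{\S'}(\A)=\tr_\S(\A)$; and $\tr_{\S'}(\A)=\tr_T(\A)$ for $T:=\coprod_{\S'}S$ because any map $S\to M$ with $S\in\S'$ factors through $T$ via the coproduct inclusion, and conversely any map $T\to M$ restricts to maps on each summand, so the two traces have the same image submodules.

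For item (2), fix $a,b\in\Ob(\A)$ and recall from Sec.\,\ref{subs_traces_and_bimodules} that $\tr_T(\A)_b = \tr_T(\A(-,b))=\tr_T(H_b)$, so $\tr_T(\A)(a,b)=\tr_T(H_b)(a)\leq H_b(a)=\A(a,b)$. By definition $\tr_T(H_b)=\sum\Im(f)$ over all $f\colon T^{(n)}\to H_b$ (finite coproducts of copies of $T$ suffice because $H_b$ is finitely generated, being representable, hence any map from a coproduct $T^{(I)}$ factors through a finite subcoproduct on the image of generators of $H_b$; alternatively one argues this directly). Now I would translate "$\gamma\in\tr_T(H_b)(a)$" via the Yoneda Lemma: $\gamma\in\A(a,b)=\hom_\A(H_a,H_b)$ corresponds to $H_\gamma\colon H_a\to H_b$, and $\gamma$ lies in some $\Im(f)$ with $f\colon T^{(n)}\to H_b$ exactly when the composite $H_a\xrightarrow{H_\gamma}H_b$ lifts along $f$; since $H_a$ is projective, such a lift exists iff the image of $H_\gamma$ is contained in $\Im(f)=\tr_T(H_b)$, which is iff $\gamma\in\tr_T(H_b)(a)$. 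Thus $\tr_T(\A)(a,b)$ is precisely the set of $\gamma$ with $H_\gamma$ factoring through a finite coproduct of copies of $T$. Combined with (1), since factoring through $T^{(n)}=(\coprod_{\S'}S)^{(n)}$ is the same as factoring through a finite coproduct of objects of $\S$ (enlarge the finite index set), this gives the stated description for arbitrary $\S$.

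For item (3), assume every $S\in\S$ is projective. I want $\tr_\S(\A)\cdot\tr_\S(\A)=\tr_\S(\A)$; the inclusion $\subseteq$ is automatic, so I must show every $\gamma\in\tr_\S(\A)(a,b)$ is a finite sum $\sum_i\phi_i\circ\psi_i$ with $\phi_i,\psi_i\in\tr_\S(\A)$. By (2), $H_\gamma\colon H_a\to H_b$ factors as $H_a\xrightarrow{u} S^{(n)} \xrightarrow{v} H_b$ for some $S^{(n)}$ with $S\in\S$ (or a finite coproduct of various $S$'s). Here is where projectivity enters: $S$ is a summand of a coproduct of representables, so the identity of $S^{(n)}$ — or better, the inclusion/projection data — factors through some $H_{c_1}\oplus\cdots\oplus H_{c_k}$, i.e.\ there are $S^{(n)}\xrightarrow{p}\coprod_j H_{c_j}\xrightarrow{q} S^{(n)}$ with $qp=\id$. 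Then $H_\gamma = v\circ q\circ p\circ u$, and writing $pu=(w_j)_j$ with $w_j\colon H_a\to H_{c_j}$ and $vq=(z_j)_j$ with $z_j\colon H_{c_j}\to H_b$, we get $\gamma = \sum_j z_j\circ w_j$ after applying Yoneda, where $z_j\in\A(c_j,b)$ corresponds to $H_{c_j}\xrightarrow{z_j}H_b$ which factors through $S^{(n)}$ (via $q$ restricted appropriately), hence $z_j\in\tr_\S(\A)(c_j,b)$; and similarly each $w_j\in\A(a,c_j)$ has $H_{w_j}$ factoring through $S^{(n)}$, so $w_j\in\tr_\S(\A)(a,c_j)$. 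Thus $\gamma\in(\tr_\S(\A)\cdot\tr_\S(\A))(a,b)$, proving idempotency. The final summary sentence is now immediate by combining (3) with (2). The main obstacle I anticipate is the bookkeeping in (3): correctly inserting the representable "resolution" $\coprod_j H_{c_j}$ of $S^{(n)}$ between the two halves of the factorization and verifying via Yoneda that both resulting families of components land in $\tr_\S(\A)$ — the rest is routine manipulation of traces and the Yoneda identification $\hom_\A(H_a,H_b)\cong\A(a,b)$.
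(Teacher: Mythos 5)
Your overall architecture matches the paper's: part (1) by a well-poweredness/set-of-images argument, part (2) by the Yoneda identification of $\gamma$ with $H_\gamma$ together with projectivity of $H_a$, and part (3) by splicing a representable resolution of the projective into the factorization of $H_\gamma$. Parts (1) and (2) are essentially correct as written (the ``countable-or-smaller cofinal chain'' in (1) is neither needed nor true in general, but choosing one finite witness per distinct subobject already yields the small set $\S'$; and in (2) the passage from copies of $T$ to objects of $\S$ goes by \emph{shrinking} an infinite coproduct to a finite subcoproduct using that $H_a$ is finitely generated, not by ``enlarging the finite index set'').

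There is, however, a genuine gap in (3). You assert that, $S^{(n)}$ being projective, ``the inclusion/projection data factors through some $H_{c_1}\oplus\cdots\oplus H_{c_k}$'', i.e.\ that $S^{(n)}$ is a retract of a \emph{finite} coproduct of representables. That is exactly the characterization of \emph{finitely generated} projectives (Lem.\,\ref{description_fpp_lemma}), and it fails for an arbitrary projective $S\in\S$ --- for instance $S=\coprod_{\N}H_x$ is projective but not a retract of any finite coproduct of representables. The finiteness of the index set is essential, since an element of $\tr_\S(\A)\cdot\tr_\S(\A)$ must be a \emph{finite} sum $\sum_j z_j\circ w_j$. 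The correct source of finiteness (and the paper's move) is the finite generation of $H_a$: choose a (possibly infinite) splitting $S^{(n)}\xrightarrow{p'}\coprod_{j\in J}H_{c_j}\xrightarrow{q'}S^{(n)}$ with $q'p'=\id$, and observe that the composite $p'\circ u\colon H_a\to\coprod_{J}H_{c_j}$ factors through a finite subcoproduct $\coprod_{F}H_{c_j}$ because $H_a$ is finitely generated; then all but finitely many $w_j:=\pi_j\circ p'\circ u$ vanish, and the decomposition $\gamma=\sum_{j\in F}z_j\circ w_j$ with both factors landing in $\tr_\S(\A)$ (by your part (2), since each visibly factors through $S^{(n)}$) goes through as you intended. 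With that repair your proof of (3), and hence of the summary sentence, is the paper's proof.
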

\begin{proof}
(1) Given $b\in\Ob(\mathcal{A})$, the submodules  $L$ of $H_b:=\mathcal{A}(-,b)$ which are of the form $L=\Im(f)$, for some morphism $f\colon S\to  H_b$, with $S\in\mathcal{S}$, form a set. Therefore we can select a set $\mathcal{S}_b\subseteq\mathcal{S}$ such that $\tr_\mathcal{S}(H_b)=\tr_{\mathcal{S}_b}(H_b)$. Letting  $\mathcal{S}':=\bigcup_{b\in\Ob(\mathcal{A})}\mathcal{S}_b$, it is clear that $\tr_{\mathcal{S}'}(H_b)=\tr_\mathcal{S}(H_b)$, for all $b\in\Ob(\mathcal{A})$. 
Hence, if we put $T:=\coprod_{S\in\mathcal{S}'}S$, then $\tr_\mathcal{S}(H_b)=\tr_\mathcal{S'}(H_b)=\tr_T(H_b)$, for each $b\in\Ob(\mathcal{A})$. 

\medskip\noindent
(2) Let us denote by $\I_T(-,-)\leq \A(-,-)$ the ideal of morphism $\gamma$ such that $H_\gamma$ factors through a finite coproduct of copies of $T$. We then have to verify that, given $a,\,b\in\Ob(\A)$, $\I_T(a,b)= \tr_T(\A)(a,b)=\tr_T(H_b)(a)$. Indeed, consider a morphism $\gamma\colon a\to b$ in $\A$, and suppose that $H_\gamma=\sum_{i=1}^kg_if_i$, where $f_i$ and $g_i$ are as in the following picture:
\begin{equation}\label{picture}
\xymatrix@R=-4pt@C=18pt{
&&&&T\ar[ddrrrr]^{g_1}\\
&&&&+\\
\sum_{i=1}^kg_if_i\colon H_a\ar[uurrrr]^(.6){f_1}\ar[ddrrrr]_(.6){f_k}&&&&\vdots&&&&H_b\\
&&&&+\\
&&&&T\ar[uurrrr]_{g_k}
}
\end{equation}
In particular, by the Yoneda Lemma, we get  $\gamma=(H_\gamma)_a(\id_a)=\sum_{i=1}^k(g_i)_a(f_i)_a(\id_a)\leq \sum_{i=1}^k\Im(g_i)_a\leq \tr_{T}(\A)(a,b)$.\\
On the other hand, given an element $\delta\in \tr_{T}(\A)(a,b)$, by definition of trace, there exist $g_1,\dots,g_k\colon T\to H_b$, such that $\delta\in \sum_{i=1}^k\Im(g_i)_a$. This means that there exist $x_1,\dots,x_k\in T_a$, such that $\delta= \sum_{i=1}^k(g_i)_a(x_i)$. Again, by the Yoneda Lemma, there exist unique morphisms $f_1,\dots,f_k\colon H_a\to T$ such that $(f_i)_a(\id_a)=x_i$. Hence, we are again in the situation of \eqref{picture}, so that $H_\delta$ factors through $T^k$, since $H_\delta=\sum_{i=1}^kg_if_i$.

\smallskip\noindent
(3) By part (1), it is clear that we can choose a projective module $T$ such that $\tr_\S(-,-)=\tr_T(-,-)$. Consider  an epimorphism $p\colon \coprod_{I}H_{x_i}\to T$, with $x_i\in \Ob(\A)$ for all $i\in I$ and, using the projectivity of $T$, choose a section $u\colon T\to \coprod_{I}H_{x_i}$, that is $p\circ u=\id_T$. Now, given $\alpha\in\mathcal{A}(a,b)$ such that  $H_\alpha$ factors through $T$, that is, we have  $f\colon H_a\to T$ and $g\colon T\to H_b$ such that $H_\alpha=g\circ f$, there exists a finite subset $F\subseteq I$ such that the map  $u\circ f\colon H_a\to \coprod_{I}H_{x_i}$ factors through the inclusion $\iota_F\colon \coprod_{F}H_{x_i}\to \coprod_{I}H_{x_i}$; let also $\pi_F\colon \coprod_{I}H_{x_i}\to \coprod_{F}H_{x_i}$ be the corresponding projection. We obtain a commutative diagram as follows:
\[
\xymatrix@R=15pt@C=30pt{
H_a\ar@/_-20pt/[rrrrrr]|{H_\alpha}\ar@/_+5pt/[dr]_f\ar[rr]&&\coprod_{I}H_{x_i}\ar[r]^{\pi_F}& \coprod_{F}H_{x_i}\ar[r]^{\iota_F}&\coprod_{I}H_{x_i}\ar@/_+5pt/[dr]_p\ar[rr]&&H_b\\
&T\ar@/_+5pt/[ur]_u&&&&T\ar@/_+5pt/[ur]_g
}
\]
Now, let also $\pi_k\colon \coprod_{I}H_{x_i}\to H_{x_k}$ and $\iota_k\colon H_{x_k}\to \coprod_{I}H_{x_i}$ be the obvious projection and inclusion, for $k\in F$. Using the Yoneda Lemma, the above discussion shows that 
\begin{align*}
\alpha&=(H_\alpha)_a(\id_a)\\
&=\sum_{F}((g\circ p\circ \iota_k)\circ(\pi_k\circ u\circ f))_a(\id_a)\\
&=\sum_{F}(g\circ p\circ \iota_k)_{x_k}(\id_{x_k})\circ(\pi_k\circ u\circ f)_a(\id_a).
\end{align*}
Now, both $(\pi_k\circ u\circ f)_a(\id_a) \in \tr_T(\A)(a,x_k)$ and $(g\circ p\circ \iota_k)_{x_k}(\id_{x_k})\in \tr_T(\A)(x_k,b)$, showing that $\alpha\in \tr_{T}(\A)^2(a,b)$, as desired.
%
%
%
%
%
\end{proof}

Given a set $\M$ of morphisms in $\A$ we have described in \eqref{general_form_of_ideal_generated} the general form of an element in the ideal $\A\M\A$.  Of particular interest for us is the case when $\mathcal{M}=\mathcal{E}$ is a set of idempotent endomorphisms of objects of $\mathcal{A}$. As in the case of modules over unital rings, the reason is the following proposition:

\begin{proposition} \label{prop.ideals generated by idempotents}
Let $\mathcal{A}$ be a preadditive category and let $\mathcal{E}$ be a set of idempotent endomorphisms of objects of $\mathcal{A}$.
Then, letting $\mathcal{P}_\mathcal{E}:=\{\epsilon\mathcal{A}:\epsilon\in\mathcal{E}\}$ (where each $\epsilon\mathcal{A}$ is a finitely generated projective module, see Lem.\,\ref{description_fpp_lemma}), 
\[
\tr_{\P_\E}(\A)=\A\E\A.
\]
\end{proposition}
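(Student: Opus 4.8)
The plan is to reduce the claim to the case of a single idempotent and then to unwind the Yoneda correspondence. Both sides are additive in $\E$: by Definition~\ref{def.principal ideal} one has $\A\E\A=\sum_{\epsilon\in\E}\A\epsilon\A$, and directly from the definition of the trace $\tr_{\P_\E}(\A)=\sum_{\epsilon\in\E}\tr_{\epsilon\A}(\A)$, where $\tr_{\epsilon\A}(-)$ denotes the trace of the one-element class $\{\epsilon\A\}$. Hence it is enough to prove $\tr_{\epsilon\A}(\A)=\A\epsilon\A$ for a single idempotent endomorphism $\epsilon\colon x\to x$. Here $(\A\epsilon\A)(a,b)$ is the subgroup of $\A(a,b)$ generated by the compositions $\delta\circ\epsilon\circ\gamma$ with $\gamma\in\A(a,x)$ and $\delta\in\A(x,b)$, while $\tr_{\epsilon\A}(\A)(a,b)=\tr_{\epsilon\A}(H_b)(a)$ by Lem.\,\ref{lemma_traces_are_functorial_bimodules}.

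The tool for both inclusions is the presentation of $\epsilon\A$ recalled from the proof of Lem.\,\ref{description_fpp_lemma}: $\epsilon\A=\Im(\epsilon\circ-\colon H_x\to H_x)$ is a submodule of $H_x$; the inclusion $\iota\colon\epsilon\A\hookrightarrow H_x$ is split by the retraction $\pi\colon H_x\to\epsilon\A$ with $\pi_a(\beta)=\epsilon\circ\beta$; and, evaluated at an object $a$, $(\epsilon\A)(a)=\{\epsilon\circ\gamma:\gamma\in\A(a,x)\}$ inside $H_x(a)=\A(a,x)$.

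For $\A\epsilon\A\subseteq\tr_{\epsilon\A}(\A)$, I would consider, for each $\delta\in\A(x,b)$, the morphism $g_\delta:=H_\delta\circ\iota\colon\epsilon\A\to H_b$; since $(g_\delta)_a(\epsilon\circ\gamma)=\delta\circ\epsilon\circ\gamma$ for every $\gamma\in\A(a,x)$, each generator of $(\A\epsilon\A)(a,b)$ lies in $\Im(g_\delta)(a)\subseteq\tr_{\epsilon\A}(H_b)(a)$. Conversely, given any $g\colon\epsilon\A\to H_b$, I would use $\pi\circ\iota=\id_{\epsilon\A}$ to write $g=(g\circ\pi)\circ\iota$ and then apply the Yoneda Lemma to $g\circ\pi\colon H_x\to H_b$ to produce $\delta:=(g\circ\pi)_x(\id_x)\in\A(x,b)$ with $g\circ\pi=H_\delta$, whence $g=H_\delta\circ\iota$; thus $g_a$ carries $(\epsilon\A)(a)$ into $\{\delta\circ\epsilon\circ\gamma:\gamma\in\A(a,x)\}\subseteq(\A\epsilon\A)(a,b)$, and since $\tr_{\epsilon\A}(\A)(a,b)=\tr_{\epsilon\A}(H_b)(a)$ is by definition the sum of the subgroups $\Im(g)(a)$ over all such $g$, we obtain $\tr_{\epsilon\A}(\A)(a,b)\subseteq(\A\epsilon\A)(a,b)$. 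I do not expect a genuine obstacle: the whole argument is careful bookkeeping around the split monomorphism $\iota$ together with the Yoneda isomorphism $\hom_\A(H_x,H_b)\cong\A(x,b)$, the only point worth isolating being the resulting description of the morphisms $\epsilon\A\to H_b$ as the elements of $\A(x,b)$ realized via $\delta\mapsto H_\delta\circ\iota$.
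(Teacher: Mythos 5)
Your proposal is correct and follows essentially the same route as the paper: reduce to a single idempotent $\epsilon$ by additivity of both sides, prove $\A\epsilon\A\subseteq\tr_{\epsilon\A}(\A)$ by restricting $H_\delta$ along $\iota\colon\epsilon\A\hookrightarrow H_x$, and prove the reverse inclusion by writing an arbitrary $g\colon\epsilon\A\to H_b$ as $H_\delta\circ\iota$ via the splitting $\pi\circ\iota=\id$ and the Yoneda Lemma. The paper phrases the converse through the factorization $\Im(f)=\Im(H_{\gamma\circ\epsilon})$, but this is the same computation.
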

\begin{proof}
Let us start by noting that $\tr_{\P_\E}(\A)=\sum_{\epsilon\in \E}\tr_{\epsilon\A}(\A)$ and $\A\E\A=\sum_{\epsilon\in \E}\A\epsilon\A$. Hence, our statement will follow if we prove that, for a given idempotent endomorphism $\epsilon\colon x\to x$ in $\A$, $\tr_{\epsilon \A}(\A)=\A\epsilon\A$.
%

\smallskip
For the inclusion ``$\geq$'', fix $a,\, b\in\Ob(\mathcal{A})$ and take any composition 
\[
\xymatrix@C=12pt{
a\ar[rr]^{\alpha}&&x\ar[rr]^{\epsilon}&&x\ar[rr]^{\beta}&&b
}
\] 
of morphisms in $\mathcal{A}$. If we denote by $H'_\beta$ the restriction of $H_\beta \colon H_x\to  H_b$ to the submodule $\epsilon\mathcal{A}\leq H_x$, then $(H'_\beta )_a\colon(\epsilon\mathcal{A})(a)\to H_b(a)$ maps $\epsilon\circ\alpha$ onto $\beta\circ\epsilon\circ\alpha$, so that $\beta\circ\epsilon\circ\alpha\in\Im(H'_\beta)(a)$. Since each element of $(\mathcal{A}\epsilon\mathcal{A})(a,b)$ is a sum of compositions $\beta\circ\epsilon\circ\alpha$ as indicated, we conclude that $(\mathcal{A}\epsilon\mathcal{A})(a,b)\subseteq\tr_{\epsilon\mathcal{A}}(\mathcal{A})(a,b)$, for all $a,b\in\Ob(\mathcal{A})$.

\smallskip
On the other hand, for the inclusion ``$\leq$'', recall that $\tr_{\epsilon\mathcal{A}}(\mathcal{A})(-,b)=\tr_{\epsilon\mathcal{A}}(H_b)$ so it is enough to prove that given a morphism $f\colon \epsilon\mathcal{A}\to  H_b$, then $\Im(f)\leq (\mathcal{A}\epsilon\mathcal{A})(-,b)$.
 Now $\epsilon\mathcal{A}$ is the image of the idempotent endomorphism $H_\epsilon\colon H_x\to  H_x$, so that $\Im(f)=\Im(g\circ H_\epsilon)$, for some morphism $g:H_x\to  H_b$. By the Yoneda Lemma, we know that $g=H_\gamma$, for some $\gamma\in\mathcal{A}(x,b)$, and $\Im(f)=\Im(H_\gamma\circ H_\epsilon)=\Im(H_{\gamma\circ\epsilon})$. Then, for any $a\in\Ob(\mathcal{A})$, 
\[
\Im(f)(a)=\{\gamma\circ\epsilon\circ\beta:\beta\in\mathcal{A}(a,x)\}     \leq\mathcal{A}(a,b)
\]
Hence, $\Im(f)(a)\leq (\mathcal{A}\epsilon\mathcal{A})(a,b)$, and so $\Im(f)\leq  (\mathcal{A}\epsilon\mathcal{A})(-,b)$, as desired. 
\end{proof}

A particular case of the above proposition is when $\E$ is a set of identities of objects in $\A$, that is, when there exists a set $\X\subseteq \Ob(\A)$ and $\E=\{\id_x:x\in\X\}$; in this case we let $\A\X\A:=\A\E\A$. A general element in $\A\X\A$ is of the form:
\[
\xymatrix@R=-2.5pt@C=18pt{
&&&&x_1\ar[ddrrrr]^{\delta_1}\\
&&&&+\\
a\ar[uurrrr]^{\gamma_1}\ar[ddrrrr]_{\gamma_k}&&&&\vdots&&&&b\\
&&&&+\\
&&&&x_k\ar[uurrrr]_{\delta_k}
}
\]
where $x_1,\dots,x_k\in \X$.


\begin{proposition}\label{prop_char_ideals_trace_fgp}
Let $\A$ be a small preadditive category and consider the following conditions for an ideal $\I$ of $\A$:
\begin{enumerate}[\rm (1)]
\item $\I=\tr_{\P}(\A)$ is the trace of a class of finitely generated projective modules $\P$;
\item $\I=\A\E\A$ is generated by a set $\E$ of idempotent endomorphisms of objects of $\A$;
\item $\I=\A \X \A$, where $\X$ is a subset of $\Ob(\A)$.
\end{enumerate}
Then one always has the implications (3)$\Rightarrow$(2)$\Rightarrow$(1), while the implication (1)$\Rightarrow$(2) holds when $\A$ is additive, and the implication (2)$\Rightarrow$(3) holds when $\A$ is idempotent complete. In particular, the three conditions are all equivalent whenever $\A$ is a Cauchy complete additive category.
\end{proposition}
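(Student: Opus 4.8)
The plan is to prove the chain $(3)\Rightarrow(2)\Rightarrow(1)$ unconditionally, then $(1)\Rightarrow(2)$ assuming $\A$ additive and $(2)\Rightarrow(3)$ assuming $\A$ idempotent complete; the last assertion is then immediate, since a Cauchy complete additive category is both additive and idempotent complete, so all four implications apply. Two of the implications are formal. For $(3)\Rightarrow(2)$, given $\X\subseteq\Ob(\A)$ one takes $\E:=\{\id_x:x\in\X\}$, which is a set of idempotent endomorphisms, and $\A\E\A=\A\X\A$ by the definition recalled right after Proposition \ref{prop.ideals generated by idempotents}. For $(2)\Rightarrow(1)$, given $\I=\A\E\A$ one sets $\P_\E:=\{\epsilon\A:\epsilon\in\E\}$; by the implication $(2)\Rightarrow(1)$ of Lemma \ref{description_fpp_lemma} each $\epsilon\A$ is finitely generated projective, and Proposition \ref{prop.ideals generated by idempotents} gives $\tr_{\P_\E}(\A)=\A\E\A=\I$, exhibiting $\I$ as the trace of a class of finitely generated projectives.

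For $(1)\Rightarrow(2)$ when $\A$ is additive, I would start from $\I=\tr_{\P}(\A)$ with $\P$ a class of finitely generated projectives. Lemma \ref{lem.trace-of-projectives is idempotent}(1) allows us to replace $\P$ by a small \emph{subset} $\P'$ without changing the trace, and from the definition of trace one has $\tr_{\P'}(\A)=\sum_{S\in\P'}\tr_S(\A)$, so it suffices to deal with a single finitely generated projective $S$. Since $\A$ is additive, the implication $(1)\Rightarrow(2)$ of Lemma \ref{description_fpp_lemma} produces an idempotent endomorphism $\epsilon_S$ with $S\cong\epsilon_S\A$, and then $\tr_S(\A)=\tr_{\epsilon_S\A}(\A)=\A\epsilon_S\A$ by Proposition \ref{prop.ideals generated by idempotents}. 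Summing over $\P'$ yields $\I=\A\E\A$ with $\E:=\{\epsilon_S:S\in\P'\}$; note that the passage from the class $\P$ to the set $\P'$ is essential here, as otherwise $\E$ need not be a set.

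For $(2)\Rightarrow(3)$ when $\A$ is idempotent complete, since $\A\E\A=\sum_{\epsilon\in\E}\A\epsilon\A$ it is enough to show that for a single idempotent $\epsilon\colon x\to x$ the ideal $\A\epsilon\A$ equals $\A\{y\}\A$ for a suitable object $y$. Splitting $\epsilon=\lambda\rho$ with $\rho\colon x\to y$, $\lambda\colon y\to x$ and $\rho\lambda=\id_y$, one checks both inclusions on generators: a composition $\delta\circ\epsilon\circ\gamma$ equals $(\delta\lambda)\circ\id_y\circ(\rho\gamma)\in\A\{y\}\A$, while $\delta'\circ\id_y\circ\gamma'=\delta'\circ\gamma'=(\delta'\rho)\circ\epsilon\circ(\lambda\gamma')\in\A\epsilon\A$ (rewriting $\epsilon=\lambda\rho$ and using $\rho\lambda=\id_y$ twice). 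Hence $\I=\A\E\A=\A\X\A$ with $\X:=\{y_\epsilon:\epsilon\in\E\}$. The one step here that is not pure bookkeeping is this final verification that splitting an idempotent collapses $\A\epsilon\A$ to an ideal generated by an identity; otherwise the whole argument is an orchestration of Lemma \ref{description_fpp_lemma}, Lemma \ref{lem.trace-of-projectives is idempotent}(1), and Proposition \ref{prop.ideals generated by idempotents}, so I do not expect any serious obstacle.
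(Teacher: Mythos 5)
Your proof is correct and follows essentially the same route as the paper: (3)$\Rightarrow$(2)$\Rightarrow$(1) via Proposition \ref{prop.ideals generated by idempotents}, (1)$\Rightarrow$(2) via Lemma \ref{description_fpp_lemma}, and (2)$\Rightarrow$(3) by splitting each idempotent, where your explicit two-inclusion computation correctly fills in what the paper leaves as ``not difficult to verify''. The detour through Lemma \ref{lem.trace-of-projectives is idempotent}(1) to shrink $\P$ to a set is harmless but unnecessary: since $\A$ is small, the idempotent endomorphisms of objects of $\A$ already form a set, so $\{\epsilon_S : S\in\P\}$ is a set even when $\P$ is a proper class.
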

\begin{proof}
The implication ``(3)$\Rightarrow$(2)'' is trivial, while ``(2)$\Rightarrow$(1)" follows by Prop.\,\ref{prop.ideals generated by idempotents}. \\Suppose now that $\A$ is additive and that $\I=\tr_{\P}(\A)$ is the trace of a class of finitely presented projective modules $\P$. By Lem.\,\ref{description_fpp_lemma}, for each $P\in \P$ there is $x_P\in \Ob(\A)$ and an  idempotent endomorphism $\epsilon_P\colon x_P\to x_P$ such that $P\cong \epsilon_P\A$. Letting $\E:=\{\epsilon_P:P\in \P\}$, we obtain by Prop.\,\ref{prop.ideals generated by idempotents}
that $\I=\tr_{\P}(\A)=\A\E\A$, so the implication ``(1)$\Rightarrow$(2)" follows when $\A$ is additive. \\
Finally, suppose that $\A$ is idempotent complete and that $\I=\A\E\A$ for a set $\E$ of idempotent endomorphisms of objects of $\A$. Then, given $\epsilon\colon x_\epsilon\to x_\epsilon$ in $\E$, there are $y_\epsilon\in\Ob(\A)$ and maps $\iota_\epsilon\colon y_\epsilon\to x_\epsilon$ and $\pi_\epsilon\colon x_\epsilon\to y_\epsilon$ such that $\epsilon=\iota_\epsilon\circ\pi_\epsilon$ and $\id_{y_\epsilon}=\pi_\epsilon\circ\iota_\epsilon$. It is not difficult to verify that $\A\E\A=\A\{{y_\epsilon}:\epsilon\in\E\}\A$, so that also the implication ``(2)$\Rightarrow$(3)'' holds when $\A$ is idempotent complete.
%
%
\end{proof}

The above proposition, together with Rem.\,\ref{rem_morita_cauchy}, allows us to find a characterization for the idempotent ideals which are traces of finitely generated projective modules:

\begin{corollary}\label{coro_ideals_vs_cauchy_with_generator}
Let $\A$ be a small preadditive category, then there is a bijection:
{\small 
\[
\xymatrix@R=0pt@C=20pt{
{\left\{\begin{matrix}\text{Full subcategories of $\proj(\A)(\cong\widehat A_{\oplus})$}\\ \text{closed under coproducts and summands}\end{matrix}\right\}}\ar@{<->}[rr]^(.55){1:1}&&{\left\{\begin{matrix}\text{Idempotent ideals of $\A$ which}\\ \text{are traces of f.g.\ projectives}\end{matrix}\right\}}\\
\X\ar@{|->}[rr]&&\tr_\X(\A)
}
\]
}
Furthermore, the above bijection restricts to the following one:
{\small 
\[
\xymatrix@R=0pt@C=20pt{
{\left\{\begin{matrix}\text{Full subcategories of $\widehat A_{\oplus}$ closed}\\ \text{under coproducts and summands}\\
\text{and with a $\oplus$-generator}\end{matrix}\right\}}\ar@{<->}[rr]^(.52){1:1}&&{\left\{\begin{matrix}\text{Idempotent ideals of $\A$ which are}\\ \text{traces of a single f.g.\ projective}\end{matrix}\right\}}
}
\]
}
\end{corollary}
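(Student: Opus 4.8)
\noindent\emph{Sketch of a proof (plan).} The plan is to reduce to the case where $\A$ is a Cauchy complete additive category and then to translate both sides of the claimed bijection into a combinatorial statement about subsets of $\Ob(\A)$. For the reduction I would use Cor.\,\ref{ADD_morita}, Cor.\,\ref{application_yoneda} and the lemma relating the ideals of $\A$ with those of $\widehat\A_\oplus$: the restriction of scalars $\Psi\colon\mod{\widehat\A_\oplus}\to\mod\A$ is an equivalence, it carries $\proj(\widehat\A_\oplus)$ onto $\proj(\A)$ and hence induces a bijection between the full subcategories closed under coproducts and summands on the two sides (clearly respecting the existence of a $\oplus$-generator); moreover $I\mapsto\widehat\A_\oplus I\widehat\A_\oplus$ is a bijection between the ideals of $\A$ and those of $\widehat\A_\oplus$, and since $\Psi$ commutes with traces and sends $H_a^{\widehat\A_\oplus}$ to $H_a^{\A}$ for $a\in\Ob(\A)$, while $\tr_\S(\A)$ is by construction the sub-bimodule $a\mapsto\tr_\S(H_a)$, this bijection matches $\tr_\S(\A)$ with $\tr_{\Psi^{-1}\S}(\widehat\A_\oplus)$ and so restricts to a bijection between the ideals that are traces of f.g.\ projectives (which are automatically idempotent, by Lem.\,\ref{lem.trace-of-projectives is idempotent}). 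All of this is compatible with the map in the statement, so it suffices to treat $\widehat\A_\oplus$; hence I assume $\A=\widehat\A_\oplus$ is Cauchy complete.

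Under this hypothesis every object of $\proj(\A)$ is isomorphic to some $H_x$ (Lem.\,\ref{description_fpp_lemma}), so a full subcategory $\X\subseteq\proj(\A)$ closed under coproducts and summands amounts to a subset $\X_0:=\{x\in\Ob(\A):H_x\in\X\}$ of $\Ob(\A)$ closed under finite direct sums and under direct summands in $\A$, and by Prop.\,\ref{prop.ideals generated by idempotents} (applied to $\E=\{\id_x:x\in\X_0\}$, for which $\P_\E=\{H_x:x\in\X_0\}$) one gets $\tr_\X(\A)=\A\X_0\A$. On the other side, Prop.\,\ref{prop_char_ideals_trace_fgp} tells us that, $\A$ being Cauchy complete, an ideal is a trace of f.g.\ projectives exactly when it has the form $\A\W\A$ for some $\W\subseteq\Ob(\A)$. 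Thus the map of the statement becomes $\X_0\mapsto\A\X_0\A$, and I must show it is a bijection from the set of subsets of $\Ob(\A)$ closed under finite $\oplus$ and summands onto $\{\A\W\A:\W\subseteq\Ob(\A)\}$. For surjectivity, given $\W$ let $\overline\W$ be its closure under finite direct sums and summands; writing $\id_{x\oplus y}=\iota_x\pi_x+\iota_y\pi_y$ to absorb direct sums, and factoring a morphism through $x$ whenever its source or target is a summand of $x$, a routine induction on the explicit elements of $\A\overline\W\A$ gives $\A\overline\W\A=\A\W\A$. For injectivity, suppose $\A\X_0\A=\A\Y_0\A$ with $\X_0,\Y_0$ closed under finite $\oplus$ and summands, and take $x\in\X_0$: then $\id_x\in(\A\X_0\A)(x,x)=(\A\Y_0\A)(x,x)$, so $\id_x=\sum_{i=1}^k\delta_i\circ\gamma_i$ with $\gamma_i\colon x\to y_i$, $\delta_i\colon y_i\to x$ and $y_i\in\Y_0$; putting $y:=y_1\oplus\cdots\oplus y_k\in\Y_0$ and assembling the $\gamma_i$ (resp.\ $\delta_i$) into a morphism $\gamma\colon x\to y$ (resp.\ $\delta\colon y\to x$) gives $\delta\circ\gamma=\id_x$, so $x$ is a direct summand of $y\in\Y_0$ and hence $x\in\Y_0$; thus $\X_0\subseteq\Y_0$ and, symmetrically, $\X_0=\Y_0$.

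For the restricted bijection one observes that $\X$ has a $\oplus$-generator exactly when $\X_0=\overline{\{x_0\}}$ for a single object $x_0$ (with $H_{x_0}$ then a $\oplus$-generator of $\X$), which by the surjectivity computation is precisely the case $\A\X_0\A=\A\{x_0\}\A=\tr_{\{H_{x_0}\}}(\A)$, the trace of a single f.g.\ projective; and this property is transported across the Morita reduction of the first paragraph. I expect the main obstacle to be exactly that reduction: one must check carefully that the equivalence $\mod\A\simeq\mod{\widehat\A_\oplus}$, the induced bijection between ideals, and the trace construction on the two regular bimodules are mutually compatible, so that the statement for a general $\A$ follows formally from the Cauchy complete case. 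Once this is granted, the remaining argument is the classical argument of Jans transported to the categorical setting, whose only genuine content is the ``$\delta\circ\gamma=\id_x$'' summand argument used for injectivity.
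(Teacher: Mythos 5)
Your proposal is correct, but it follows a genuinely different route from the paper's. The paper proves the bijection directly for an arbitrary small preadditive $\A$, with no Morita reduction: it shows that $\tr_\X(\A)=\tr_\Y(\A)$ if and only if $\add(\X)=\add(\Y)$, using the factorization description of the trace from Lem.\,\ref{lem.trace-of-projectives is idempotent} for one implication and, for the other, the observation that a f.g.\ projective $P\in\X$ satisfies $\tr_\X(P)=P$ and is a split quotient of $\tr_{\X}\bigl(\coprod_{i=1}^k H_{b_i}\bigr)\cong\coprod_{i=1}^k\tr_\X(H_{b_i})=\coprod_{i=1}^k\tr_\Y(H_{b_i})\in\Gen(\Y)$, whence $P\in\add(\Y)$ by projectivity; surjectivity then follows from $\tr_{\X'}(\A)=\tr_{\add(\X')}(\A)$. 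Your injectivity step (splitting $\id_x$ through a finite direct sum of objects of $\Y_0$) is the same idea in more explicit, element-level form, and your $\A\overline{\W}\A=\A\W\A$ computation is the analogue of the paper's surjectivity remark. The real divergence is your preliminary reduction to the Cauchy complete case: it is sound --- the restriction-of-scalars equivalence does intertwine traces with the ideal bijection $I\mapsto\widehat\A_\oplus I\widehat\A_\oplus$, essentially because an exact equivalence commutes with images and sums of subobjects and sends $H_a^{\widehat\A_\oplus}$ to $H_a^{\A}$ for $a\in\Ob(\A)$ --- but it is also the costliest and most delicate part of your plan, and the paper shows it is dispensable: since $\tr_\X(\A)$ is defined for any class $\X$ of $\A$-modules (not just representables), one can argue with $\proj(\A)$ directly over the original $\A$. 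What your route buys in exchange is that, after the reduction, Prop.\,\ref{prop_char_ideals_trace_fgp} lets you phrase both sides purely in terms of subsets of $\Ob(\A)$ and generated ideals, making the combinatorial content of Jans' argument completely transparent.
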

\begin{proof}
Given $\X,\, \Y\subseteq \proj(\A)$, recall that $\add(\X)$ is the full subcategory spanned by the summands of finite coproducts of objects in $\X$.
Now, if $\add(\X)=\add(\Y)$, it follows by the description of the trace given in Lem.\,\ref{lem.trace-of-projectives is idempotent} that $\tr_\X(\A)=\tr_\Y(\A)$. On the other hand, let us  show that $\tr_\X(\A)=\tr_\Y(\A)$ implies $\add(\X)=\add(\Y)$. 
Indeed, suppose  $\tr_\X(\A)=\tr_\Y(\A)$, and let $P\in\mathcal{X}$. Since $P$ is finitely generated projective, there is a finite family $\{b_1,\dots,b_k\}$ in $\Ob(\mathcal{A})$ together with morphisms $u\colon P \rightarrow \coprod_{i=1}^{k} H_{b_i}$ and $p\colon \coprod_{i=1}^{k} H_{b_i} \rightarrow P$ such that $p \circ u=\id_{P}$. This implies that 
\[
\tr_{\mathcal{X}}(p)\colon \tr_{\mathcal{X}}\left(\coprod_{i=1}^{k} H_{b_i}\right)  \rightarrow \tr_{\mathcal{X}}(P)=P
\] 
is a split epimorphism. But, $\tr_{\mathcal{X}}\left(\coprod_{i=1}^{k} H_{b_i}\right)\cong\coprod_{i=1}^{k} \tr_{\mathcal{X}}(H_{b_i})$ and by assumption $\tr_{\mathcal{X}}( H_{b_i})=\tr_{\mathcal{Y}}( H_{b_i})$ for all $i=1,\dots,k$. Hence, $P \in \Gen(\mathcal{Y})$. Using once again the fact that $P$ is finitely generated projective, we deduce that $P \in \add(\mathcal{Y})$. This show that $\mathcal{X}\subseteq \add(\mathcal{Y})$, and so also $\add(\mathcal{X}) \subseteq \add(\mathcal{Y})$. By simmetry, we can then conclude that  $\add(\X)=\add(\Y)$.

Consider now the correspondence in the statement:  the assignment $\X\mapsto \tr_\X(\A)$ is surjective because, for any set of finitely generated projectives $\X'$, we have verified that $\tr_{\X'}(\A)=\tr_{\add(\X')}(\A)$, so it is enough to consider traces of families which are closed under finite coproducts and summands. Furthermore, the assignment is also injective by the first part of the proof.
\end{proof}

\subsection{Direct decompositions and central idempotents} \label{subs_directsummands}

An ideal $\I$ of a small preadditive category $\A$ is said to be a {\bf direct summand} of $\A$ when there is another ideal $\I'$ of $\A$ such that, as bi-functors $\A^{\op}\times\A\to\Ab$, we have a decomposition $\A(-,-)\cong\I(-,-)\oplus\I'(-,-)$.  When there is no risk of confusion, we  just write $\A=\I\oplus \I'$ and we call this a {\bf decomposition of $\A$ as a direct sum of ideals}.

\begin{lemma} \label{lemma_direct_summands}
Let $\A$ be a preadditive category that admits two direct sum decompositions, $\A=\I\oplus\I'$ and $\A=\I\oplus\I''$, as a direct sum of ideals. The following assertions hold:
\begin{enumerate}[\rm (1)]
\item $\I'=\I''$;
\item$ \I\cdot\I'=0=\I'\cdot\I$;
\item $\I$ is an idempotent ideal;
\item we have a decomposition $M=M\I\oplus M\I''$ in $\mod\A$, for all $\A$-modules $M$. 
\end{enumerate}
\end{lemma}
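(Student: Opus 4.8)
The plan is to distill from the decomposition $\A=\I\oplus\I'$ a family of orthogonal idempotents and then read off all four assertions from it. For each $a\in\Ob(\A)$, use $\A(a,a)=\I(a,a)\oplus\I'(a,a)$ to write $\id_a=e_a+e_a'$ with $e_a\in\I(a,a)$ and $e_a'\in\I'(a,a)$. The first step is to record the basic identities: for every $f\in\I(a,b)$ one has $f=f\circ e_a=e_b\circ f$, since $f\circ e_a'$ lies in $\I(a,b)\cap\I'(a,b)=0$ (it is in $\I$ because $\I$ is an ideal and $f\in\I$, and in $\I'$ because $\I'$ is an ideal and $e_a'\in\I'$), and symmetrically for $e_b\circ f$; dually $g=g\circ e_a'=e_b'\circ g$ for $g\in\I'(a,b)$. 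Specializing to $f=e_a$ gives $e_a^2=e_a$, and the same intersection argument gives $e_a\circ e_a'=e_a'\circ e_a=0$; comparing $\phi=e_b\circ\phi+e_b'\circ\phi=\phi\circ e_a+\phi\circ e_a'$ with the unique decomposition of $\A(a,b)$ moreover shows $e_b\circ\phi=\phi\circ e_a$, i.e.\ $(e_a)_{a\in\Ob(\A)}$ is a central idempotent of $\A$ (not logically needed below, but it explains the structure and will be used in the sequel).

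Granting these identities, the algebraic parts are short. For (3): $\I\cdot\I\subseteq\I$ is clear, and conversely any $f\in\I(a,b)$ equals $e_b\circ f$ with $e_b\in\I(b,b)$ and $f\in\I(a,b)$, hence lies in $(\I\cdot\I)(a,b)$. For (2): if $\phi\in\I(c,b)$ and $\psi\in\I'(a,c)$, then $\phi\circ\psi\in\I(a,b)$ (ideal property of $\I$) and $\phi\circ\psi\in\I'(a,b)$ (ideal property of $\I'$), so $\phi\circ\psi=0$; thus $\I\cdot\I'=0$, and $\I'\cdot\I=0$ by the symmetric argument. For (1): given $f\in\I'(a,b)$ we have $f=f\circ e_a'$; now decompose $e_a'=g+g''$ with $g\in\I(a,a)$ and $g''\in\I''(a,a)$ via the \emph{second} splitting, so $f=f\circ g+f\circ g''$ with $f\circ g\in\I(a,b)\cap\I'(a,b)=0$, whence $f=f\circ g''\in\I''(a,b)$. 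Thus $\I'\subseteq\I''$, and $\I''\subseteq\I'$ symmetrically.

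For (4): fix $M\in\mod\A$. Since $M\colon\A^{\op}\to\Ab$ is additive and $e_a,e_a'$ are orthogonal idempotents summing to $\id_a$, the maps $M(e_a)$ and $M(e_a')$ are orthogonal idempotent endomorphisms of $M(a)$ with $M(e_a)+M(e_a')=\id_{M(a)}$, so $M(a)=\Im M(e_a)\oplus\Im M(e_a')$. One identifies $\Im M(e_a)=M\I(a)$ and $\Im M(e_a')=M\I''(a)$ — either using (1) or, equivalently, by running the previous paragraph's setup with the splitting $\A=\I\oplus\I''$ — as follows: $\Im M(e_a)\subseteq M\I(a)$ because $e_a\in\I(a,a)$, and for the reverse inclusion each $\alpha\in\I(a,b)$ satisfies $\alpha=\alpha\circ e_a$, so $M(\alpha)=M(e_a)\circ M(\alpha)$ and $\Im M(\alpha)\subseteq\Im M(e_a)$, giving $M\I(a)=\sum_{b,\,\alpha\in\I(a,b)}\Im M(\alpha)\subseteq\Im M(e_a)$. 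Since $M\I$ and $M\I''$ are subfunctors of $M$, the componentwise splitting $M(a)=M\I(a)\oplus M\I''(a)$ assembles into a decomposition $M=M\I\oplus M\I''$ of $\A$-modules. I expect part (1) to be the real obstacle: it is the only assertion that requires \emph{both} decompositions, and the subtlety is that one cannot simply equate the $\I$-parts of $\id_a$ arising from the two splittings — the trick is instead to expand the $\I'$-idempotent $e_a'$ along the second splitting.
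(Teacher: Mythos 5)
Your proposal is correct and follows essentially the same route as the paper's proof: both decompose identity morphisms along the two given splittings and exploit that all cross terms land in $\I\cap\I'=0$ (or $\I\cap\I''=0$). The only cosmetic difference is in part (4), where you identify $M\I(a)$ with $\Im M(e_a)$ and split $M(a)$ via the orthogonal idempotents $M(e_a)$, $M(e_a')$, whereas the paper obtains $M=M\I+M\I''$ from $M=M\A$ and kills $M\I\cap M\I'$ using assertion (2); both arguments are equally elementary.
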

\begin{proof}
(1) Let $a,b\in\Ob(\A)$ and $\alpha\in\I'(a,b)$. Using the decomposition $\A(b,b)=\I(b,b)\oplus\I''(b,b)$, we have that $1_b=e_b+e''_b$, where $e_b\in\I(b,b)$ and $e''_b\in\I''(b,b)$. We then get that $\alpha =1_b\circ\alpha =e_b\circ\alpha +e''_b\circ\alpha$. Furthermore, $e_b\circ\alpha\in\I(a,b)\cap\I'(a,b)=0$ since $\I$ and $\I'$ are both ideals. It then follows that $\alpha =e''_b\circ\alpha\in\I''(a,b)$ since $\I''$ is an ideal of $\A$. Therefore, $\I'\subseteq\I''$ and, by  symmetry, the converse inclusion also holds. 

\smallskip\noindent
(2) This  is trivial since $ \I\cdot\I'\subseteq\I\cap\I'\supseteq\I'\cdot\I$, and $\I\cap\I'=0$. 

\smallskip\noindent
(3) Taking  $\alpha\in\I(a,b)$ and arguing as in the proof of assertion (1), we get a decomposition $\alpha =e_b\circ\alpha +e''_b\circ\alpha$. But the second summand is in $\I\circ\I''=0$. Then $\alpha\in\I^2(a,b)$ since $e_b\in\I(b,b)$. 

\smallskip\noindent
(4) If we put $N:=M\I\cap M\I'$, then  $N\I=0=N\I'$ using assertion (2). On the other hand, viewing $\A$ as an ideal of itself in the obvious way, we have that $M=M\A=M(\I+\I'')\subseteq M\I+M\I''$. It then follows that $M=M\I\oplus M\I''$. 
\end{proof}

As in the case of unital rings, we have the following result.

\begin{proposition} \label{prop.bijection-central-idempotents}
Let $\A$ be a small preadditive category and let 
\[
\S_1:=\{\text{idempotent elements in $Z(\A )$}\}\quad\text{and}\quad
\S_2:=\{\text{direct summands  of $\A$}\}.
\]
The assignment $\epsilon\mapsto\I_\epsilon$, where $\I_\epsilon$
 is the ideal of $\A$ defined as 
\[
\I_\epsilon (a,b):=\{u\in\A(a,b):\epsilon_b\circ u=u\text{ (equivalently, }u\circ\epsilon_a=u\text{)}\},
\]
for all $a,b\in\Ob(\A)$, defines a bijection $\Phi\colon \S_1\overset{\cong }{\to }\S_2$. 
\end{proposition}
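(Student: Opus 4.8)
The plan is to verify that $\Phi$ lands in $\S_2$, and then that it is injective and surjective; along the way the inverse map emerges explicitly. Given a direct summand $\I$ of $\A$, its complementary ideal $\I'$ is uniquely determined by Lemma~\ref{lemma_direct_summands}(1), so one may decompose $\id_a=e_a+e'_a$ with $e_a\in\I(a,a)$ and $e'_a\in\I'(a,a)$; the inverse of $\Phi$ will send $\I$ to the natural transformation $\epsilon$ with $\epsilon_a:=e_a$.

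First I would record two elementary preliminaries. Since $\epsilon\colon\id_\A\to\id_\A$ is natural, $\epsilon_b\circ u=u\circ\epsilon_a$ for every $u\colon a\to b$; this yields the ``equivalently'' in the definition of $\I_\epsilon$ and shows that $\I_\epsilon$ is stable under pre- and post-composition, hence is a two-sided ideal (it is a subgroup since $\epsilon_b\circ-$ is additive). Next, if $\epsilon$ is an idempotent of the ring $Z(\A)$, then so is $1-\epsilon$ (where $1_a=\id_a$), and for all $a,b$ the identity $u=\epsilon_b\circ u+(1-\epsilon)_b\circ u$, together with $\epsilon_b\circ\epsilon_b=\epsilon_b$ and $\I_\epsilon(a,b)\cap\I_{1-\epsilon}(a,b)=0$, gives $\A(a,b)=\I_\epsilon(a,b)\oplus\I_{1-\epsilon}(a,b)$. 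As this decomposition is respected by the transition maps of the bifunctor $\A(-,-)$, one gets $\A\cong\I_\epsilon\oplus\I_{1-\epsilon}$, so $\Phi(\epsilon)=\I_\epsilon\in\S_2$.

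For injectivity, suppose $\I_\epsilon=\I_{\epsilon'}$. For each $a$, idempotency gives $\epsilon_a\in\I_\epsilon(a,a)=\I_{\epsilon'}(a,a)$, i.e.\ $\epsilon'_a\circ\epsilon_a=\epsilon_a$, and symmetrically $\epsilon_a\circ\epsilon'_a=\epsilon'_a$; centrality of $\epsilon'$ applied to the endomorphism $\epsilon_a$ yields $\epsilon'_a\circ\epsilon_a=\epsilon_a\circ\epsilon'_a$, whence $\epsilon_a=\epsilon'_a$ for all $a$, so $\epsilon=\epsilon'$. For surjectivity, I would take a direct summand $\I$ with (unique) complement $\I'$, set $\epsilon_a:=e_a$, and check three things: (a) $(\epsilon_a)_a$ is natural --- for $u\colon a\to b$, both $u=e_b\circ u+e'_b\circ u$ and $u=u\circ e_a+u\circ e'_a$ express $u$ inside $\I(a,b)\oplus\I'(a,b)$, using crucially that $\I$ and $\I'$ are \emph{two-sided} ideals, so uniqueness of the decomposition forces $\epsilon_b\circ u=u\circ\epsilon_a$, hence $\epsilon\in Z(\A)$; (b) $\epsilon$ is idempotent, since $e_a=\id_a\circ e_a=e_a\circ e_a+e'_a\circ e_a$ and $e'_a\circ e_a\in\I(a,a)\cap\I'(a,a)=0$; (c) $\I_\epsilon=\I$, since for $u\in\I(a,b)$ one has $e'_b\circ u\in\I(a,b)\cap\I'(a,b)=0$, so $u=\epsilon_b\circ u\in\I_\epsilon(a,b)$, while conversely $u\in\I_\epsilon(a,b)$ forces $u=\epsilon_b\circ u=e_b\circ u\in\I(a,b)$.

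I expect the main obstacle to be precisely step (a): showing that the componentwise idempotents $e_a$ assemble into a natural transformation. Everything else is bookkeeping with direct-sum decompositions, whereas this step rests squarely on an ideal being two-sided together with the uniqueness of the internal splitting $\A(a,b)=\I(a,b)\oplus\I'(a,b)$ (equivalently, on Lemma~\ref{lemma_direct_summands}(1)). Finally, the computation in (c) gives $\Phi(\Phi^{-1}(\I))=\I$, and combined with the injectivity already proved this yields $\Phi^{-1}(\Phi(\epsilon))=\epsilon$, so $\Phi$ is a bijection $\S_1\to\S_2$.
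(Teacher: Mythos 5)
Your proposal is correct and follows essentially the same route as the paper: the complement you write as $\I_{1-\epsilon}$ is exactly the paper's $\I'_\epsilon=\{v:\epsilon_b\circ v=0\}$, and your surjectivity argument (naturality of the componentwise idempotents via uniqueness of the splitting $\A(a,b)=\I(a,b)\oplus\I'(a,b)$, resting on the two-sidedness of the ideals and Lemma~\ref{lemma_direct_summands}(1)) is the paper's. The only cosmetic difference is that in the injectivity step you invoke naturality of $\epsilon'$ where the paper invokes naturality of $\epsilon$; the two are symmetric.
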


Let us remark that, given an idempotent $\epsilon\in Z(\mathcal{A})$, then $\mathcal{I}_\epsilon =\mathcal{A}\mathcal{E}_\epsilon\mathcal{A}$, where\linebreak $\mathcal{E}_\epsilon:=\{\epsilon_a : a\in\Ob(\mathcal{A})\}$.

\begin{proof}
Let $u\in\I_\epsilon (a,b)$ and let us consider morphisms $r\colon a'\to a$ and $l\colon b\to b'$ in $\A$. Then, using that $\epsilon\colon \id_\A\to \id_\A$ is a natural transformation and that $\epsilon_b\circ u=u$, we get the following equalities 
\[
\epsilon_{b'}\circ l\circ u\circ r=l\circ\epsilon_b\circ u\circ r= l\circ u\circ r,
\] 
proving that $l\circ u\circ r\in\I_\epsilon (a',b')$, so that $\I_\epsilon$ is an ideal of $\A$. Note that we have not used yet the idempotency of $\epsilon$. That is, for each $\epsilon\in Z(\A)$, we have a well-defined ideal $\I_\epsilon$ of $\A$. 

We now consider, for all $a,\, b\in\Ob(\A)$, the subgroup $\I'_\epsilon (a,b)$ of $\A(a,b)$ consisting of the morphisms $v\colon a\rightarrow b$ such that $\epsilon_b\circ v=0$ (or, equivalently, $v\circ\epsilon_a=0$).  Using again  that $\epsilon$ is a natural transformation, one readily sees that the $\I'_\epsilon (a,b)$'s define an ideal $\I'_\epsilon$ of $\A$ such that $\I_\epsilon(a,b)\cap\I'_\epsilon (a,b)=0$, for all $a,\, b\in\Ob(\A)$.  Moreover, for each $w\in\A(a,b)$, we have a decomposition $w=\epsilon_b\circ w +(w-\epsilon_b\circ w)$, where $\epsilon_b\circ w\in\I_\epsilon (a,b)$ and $w-\epsilon_b\circ w\in\I'_\epsilon (a,b)$ due to the idempotency of $\epsilon$. Therefore we have a decomposition as a direct sum of ideals $\A =\I_\epsilon\oplus\I'_\epsilon$, so that the assignment $\epsilon\mapsto\I_\epsilon$ gives a well-defined map $\Phi \colon\S_1\to\S_2$. 

To check the injectivity of $\Phi$, consider two idempotents $\epsilon ,\, \epsilon'\in Z(\A)$ and suppose that $\I_\epsilon =\I_{\epsilon'}$, i.e. $\Phi(\epsilon )=\Phi (\epsilon')$. We then have that $\epsilon_a,\,\epsilon'_a\in\I_\epsilon (a,a)=\I_{\epsilon'}(a,a)$, for all $a\in\Ob(\A)$. In particular, $\epsilon'_a\circ\epsilon_a=\epsilon_a$ and, using the naturality of $\epsilon$, we also have that $\epsilon'_a\circ\epsilon_a=\epsilon_a\circ\epsilon'_a=\epsilon'_a$ since $\epsilon'_a\in\I_\epsilon (a,a)$. Therefore $\epsilon_a=\epsilon'_a$, for all $a\in\Ob(\A)$, which implies that $\epsilon =\epsilon'$.

For the surjectivity of $\Phi$, let us take a direct summand $\I$ of $\A$ and fix a decomposition $\A=\I\oplus\I'$, which is unique according to Lemma \ref{lemma_direct_summands}. We then have a decomposition $\A(a,a)=\I(a,a)\oplus\I'(a,a)$ in $\Ab$, for all $a\in\Ob(\A)$. This gives a decomposition $1_a=\epsilon_a+\epsilon'_a$, with $\epsilon_a\in\I(a,a)$ and  $\epsilon'_a\in\I'(a,a)$, for all $a\in\Ob(\A)$. Note that, for each $u\in\I(a,b)$, we have $u=1_b\circ u=\epsilon_b\circ u+\epsilon'_b\circ u$, where the second summand belongs in  $(\I'\cdot\I)(a,b)=0$. Then $u=\epsilon_b\circ u$. 

It remains to check that the collection $\epsilon :=(\epsilon_a:a\rightarrow a)$ defines a natural transformation $\epsilon\colon\id_\A\to \id_\A$, which will be clearly idempotent and will satisfy that $\Phi (\epsilon )=\I$.  Indeed let $u\colon a\to b$ be a morphism in $\A$. Bearing in mind that $\I$ and $\I'$ are ideals, the decompositions $u=u\circ\epsilon_a+u\circ\epsilon'_a$ and $u=\epsilon_b\circ u+\epsilon'_b\circ u$ are both the decomposition of $u$ with respect to the decomposition $\A(a,b)=\I(a,b)\oplus\I'(a,b)$. By uniqueness, we then get $\epsilon_b\circ u=u\circ\epsilon_a$, and hence $\epsilon$ is a natural transformation. 
\end{proof}

\begin{corollary} \label{cor.description-TTF}
Let $\A$ be a small preadditive category, let $\epsilon=\epsilon^2\in Z(\A)$ be any idempotent element, let $\widehat{\epsilon}$ the element of $Z(\mod\A)$ corresponding to $\epsilon$ by the bijection of Proposition \ref{prop.Z(A)-vs-Z(Mod-A)} and let $\I_\epsilon$ the direct summand of $\A$ corresponding to $\epsilon$ by the bijection of Proposition \ref{prop.bijection-central-idempotents}. For a right $\A$-module $M$, the following assertions hold:
\begin{enumerate}[\rm (1)]
\item $M=M\I_\epsilon$ if, and only if,   $\widehat{\epsilon}_M$ is an isomorphism if, and only if, $\widehat{\epsilon}_M=\id_M$;
\item $M\I_\epsilon =0$ if, and only if, $\widehat{\epsilon}_M=0$. 
\end{enumerate}
\end{corollary}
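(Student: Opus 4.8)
The plan is to reduce both assertions to a single computation, namely that for every right $\A$-module $M$ and every $a\in\Ob(\A)$ one has $M\I_\epsilon(a)=\Im(\widehat{\epsilon}_{M,a})$, where $\widehat{\epsilon}_{M,a}\colon M(a)\to M(a)$; once this is established, everything follows from elementary properties of idempotent endomorphisms of abelian groups. Throughout I use the explicit description of $\widehat\epsilon$ coming from Proposition \ref{prop.Z(A)-vs-Z(Mod-A)}, namely $\widehat{\epsilon}_{M,a}=M(\epsilon_a)$ for all $a\in\Ob(\A)$.

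First I would observe that $\widehat{\epsilon}_M$ is an idempotent endomorphism of $M$ in $\mod\A$: componentwise, $\widehat{\epsilon}_{M,a}\circ\widehat{\epsilon}_{M,a}=M(\epsilon_a)\circ M(\epsilon_a)=M(\epsilon_a\circ\epsilon_a)=M(\epsilon_a)$, using that $M$ is contravariant and $\epsilon$ is idempotent (equivalently, $\widehat\epsilon$ is idempotent in $Z(\mod\A)$ since the isomorphism of Proposition \ref{prop.Z(A)-vs-Z(Mod-A)} is a ring isomorphism). As a consequence, $\widehat{\epsilon}_M$ is an isomorphism if and only if $\widehat{\epsilon}_M=\id_M$, because an invertible idempotent equals the identity; this already settles the second equivalence in (1).

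Next I would prove the identity $M\I_\epsilon(a)=\Im(M(\epsilon_a))$. For ``$\supseteq$'', note that $\epsilon_a\in\I_\epsilon(a,a)$ since $\epsilon_a\circ\epsilon_a=\epsilon_a$, so $\Im(M(\epsilon_a))$ is one of the subgroups whose sum defines $M\I_\epsilon(a)$. For ``$\subseteq$'', fix $b\in\Ob(\A)$ and $\alpha\in\I_\epsilon(a,b)$; by the very definition of $\I_\epsilon$ we have $\alpha=\alpha\circ\epsilon_a$ in $\A$, hence $M(\alpha)=M(\epsilon_a)\circ M(\alpha)$ by contravariance of $M$, and therefore $\Im(M(\alpha))\subseteq\Im(M(\epsilon_a))$; summing over all such $b$ and $\alpha$ gives $M\I_\epsilon(a)\subseteq\Im(M(\epsilon_a))$.

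Finally I would read off the two statements. For (2): $M\I_\epsilon=0$ if and only if $\Im(\widehat{\epsilon}_{M,a})=0$ for all $a$, if and only if $\widehat{\epsilon}_{M,a}=0$ for all $a$, if and only if $\widehat{\epsilon}_M=0$. For (1): $M=M\I_\epsilon$ if and only if $\widehat{\epsilon}_{M,a}$ is surjective for every $a$; since $\widehat{\epsilon}_{M,a}$ is an idempotent endomorphism of the abelian group $M(a)$, surjectivity forces $\widehat{\epsilon}_{M,a}=\id_{M(a)}$ (for $x\in M(a)$ write $x=\widehat{\epsilon}_{M,a}(y)$ and apply $\widehat{\epsilon}_{M,a}$ once more), and the converse is trivial; so $M=M\I_\epsilon$ is equivalent to $\widehat{\epsilon}_M=\id_M$, and hence, by the second paragraph, also to $\widehat{\epsilon}_M$ being an isomorphism. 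I do not anticipate a genuine obstacle here; the only point that demands a moment's care is the inclusion $M\I_\epsilon(a)\subseteq\Im(M(\epsilon_a))$, where one must use the relation $\alpha=\alpha\circ\epsilon_a$ rather than $\alpha=\epsilon_b\circ\alpha$, precisely because $M$ is a contravariant functor.
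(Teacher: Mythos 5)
Your proof is correct and follows essentially the same route as the paper's: both reduce everything to the identity $M\I_\epsilon(a)=\Im(M(\epsilon_a))=\Im(\widehat{\epsilon}_{M,a})$, established via the relation $\alpha=\alpha\circ\epsilon_a$ for $\alpha\in\I_\epsilon(a,b)$, and then conclude using that an idempotent endomorphism of an abelian group is surjective (or invertible) precisely when it is the identity. Your write-up is, if anything, slightly more explicit about the inclusion $\Im(M(\epsilon_a))\subseteq M\I_\epsilon(a)$, which the paper leaves implicit.
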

\begin{proof}
 Since we have that $\widehat{\epsilon}_M\circ\widehat{\epsilon}_M=\widehat{\epsilon}_M$, we immediately get that $\widehat{\epsilon}_M$ is an isomorphism if, and only if, $\widehat{\epsilon}_M=\id_M$. 

By definition we have that $(M\I_\epsilon)(a)=\sum_{\alpha\in\I_\epsilon (a,b)}\Im(M(\alpha))$. By the proof of the last proposition, we know that $\alpha\in\I_\epsilon (a,b)$ if and only if $\epsilon_b\circ\alpha =\alpha$, which is equivalent to say that $\alpha =\alpha\circ\epsilon_a$ due to the naturality of $\epsilon$. It then follows that $M(\alpha )=M(\epsilon_a)\circ M(\alpha)$, for all morphisms $\alpha$ in $\I$ with domain $a$. It easily follows   that $(M\I_\epsilon)(a)=\Im(M(\epsilon_a))$. 
But, by the proof of  Proposition \ref{prop.Z(A)-vs-Z(Mod-A)}, we know that the evaluation of $\widehat{\epsilon}_M$ at $a$ is $\widehat{\epsilon}_{M,a}=M(\epsilon_a)$. It then follows that $(M\I_\epsilon)(a)=\Im(\widehat{\epsilon}_{M,a})$. Hence, $M\I_\epsilon =0$ if, and only if, $\widehat{\epsilon}_{M,a}=0$, for all $a\in\Ob(\A)$, verifying assertion (2).

  On the other hand, we also have that $M=M\I_\epsilon$ if, and only if,  the induced map $\widehat{\epsilon}_{M,a}=M(a)\to M(a)$ is an epimorphism, for all $a\in\Ob(\A)$. But this latter map is an idempotent endomorphism of $M(a)$. Then it is an epimorphism if and only if it is an isomorphism.  Hence assertion (1) also follows easily. 
\end{proof}

\begin{remark} \label{rem.clarification}
Although, in order to avoid redundancy, we have not said it explicitly in the statement of the above corollary, it is clear from its proof that $M\I_\epsilon=M$ if, and only if, $M(\epsilon_a)$ is an isomorphism (equivalently $M(\epsilon_a)=\id_{M(a)}$), for all $a\in\Ob(\A)$. It is also clear that $M\I_\epsilon =0$ if, and only if,  $M(\epsilon_a)=0$ for all $a\in\Ob(\A)$. 
\end{remark}

\section{\hbox{Hereditary torsion vs  Grothendieck topologies}}

The section is organized as follows: we start recalling some basic facts and definitions about torsion pairs in Sec.\ref{recall_torsion_subs}; we  prove our generalization of Gabriel's bijection in Sec.\ref{grabriel_subs}, specializing  this result to hereditary torsion pairs of finite type in Sec.\ref{subs_torsion_fin_type}. 

\subsection{Torsion pairs}\label{recall_torsion_subs}

Recall the concept of a torsion pair from the Introduction, that we only consider in the category $\mod\A$ in the rest of the paper. 
 If $X$ is an $\A$-module and we consider the torsion  sequence 
 \[
 0\rightarrow T_X\to X\to F_X\to 0,
 \] 
 then $T_X$ and $F_X$ depend functorially on $X$, and the corresponding functors $t\colon \mod \A\to \T$ and $(1:t)\colon \mod \A\to \F$
are called, respectively, the {\bf torsion radical} and the {\bf torsion coradical} functors. In fact, $t$ is the right adjoint of the inclusion $\T\to \mod \A$, while $(1:t)$ is the left adjoint to the inclusion $\F\to \mod\A$. We can visualize this situation as in the following diagram:
\[
\xymatrix{
\T\ar[rr]|{\text{inclusion}}&&\mod \A\ar@/_-12pt/[ll]|t\ar[rr]|{(1:t)}&&\F\ar@/_-12pt/[ll]|{\text{inclusion}}
}
\]
A torsion pair ${\t}=(\T,\F)$ is said to be {\bf hereditary} provided $\T$ is closed under taking subobjects.

\medskip
The following lemma is well-known (see, for example, \cite{S}):

\begin{lemma}\label{construction_of_tor_rad}
A class $\T\subseteq \mod \A$ is a torsion class (resp., a torsionfree class) if and only if it is closed under taking quotients, extensions and coproducts (resp., subobjects, extensions and products).
\end{lemma}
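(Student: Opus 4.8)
The plan is to prove the two halves of the equivalence separately, exploiting the symmetry and the fact that $\mod\A$ is a Grothendieck category with exact products and exact direct limits (Lem.\,\ref{mod_is_groth}). First I would dispatch the easy direction: if $\T$ is a torsion class, then it equals ${}^{\perp}(\T^{\perp})$, and $\T^{\perp}$ is clearly closed under products and subobjects (being a ``right $\perp$'' class). Closure under extensions is the standard diagram chase: given $0\to A\to B\to C\to 0$ with $A,C\in\T$ and a test object $F\in\F=\T^{\perp}$, apply $\hom_\A(-,F)$ to get the exact sequence $\hom_\A(C,F)\to\hom_\A(B,F)\to\hom_\A(A,F)$ with both ends zero, hence $\hom_\A(B,F)=0$, i.e. $B\in{}^{\perp}\F=\T$. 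Closure under quotients: a quotient $\T\ni T\twoheadrightarrow T'$ gives a mono $\hom_\A(T',F)\hookrightarrow\hom_\A(T,F)=0$. Closure under coproducts: $\hom_\A(\coprod_i T_i,F)\cong\prod_i\hom_\A(T_i,F)=0$. The torsionfree case is formally dual, using that products in $\mod\A$ are exact.

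The substantive direction is the converse: assume $\T$ is closed under quotients, extensions and coproducts, and produce a torsion pair $(\T,\T^{\perp})$. The standard approach is to build, for each module $X$, a largest submodule $t(X)\in\T$. Concretely, set $t(X):=\tr_{\T}(X)$, the trace of $\T$ in $X$ in the sense of the Definition preceding Lem.\,\ref{lemma_traces_are_functorial}, i.e. the sum of all images of maps $T\to X$ with $T\in\T$; since $\mod\A$ has a generating set, this sum can be taken over a set, so $t(X)$ is a genuine submodule, and it is functorial by Lem.\,\ref{lemma_traces_are_functorial}. The key point is that $t(X)\in\T$: write $t(X)$ as the image of the induced map $\coprod_{j} T_j\to X$ from a set-indexed family of maps; the source lies in $\T$ by closure under coproducts, and $t(X)$, being a quotient of it, lies in $\T$ by closure under quotients. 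Next I would show $X/t(X)\in\T^{\perp}$: if some nonzero $g\colon T\to X/t(X)$ with $T\in\T$ existed, its image $\Im(g)$ would be in $\T$ (quotient of $T$), and pulling back along $X\twoheadrightarrow X/t(X)$ would produce a submodule $X'\le X$ sitting in an extension $0\to t(X)\to X'\to\Im(g)\to 0$ with both ends in $\T$, hence $X'\in\T$ by closure under extensions; but then $X'\subseteq t(X)$ by maximality of the trace, forcing $\Im(g)=0$, a contradiction. This gives the exact sequence $0\to t(X)\to X\to X/t(X)\to 0$ with $t(X)\in\T$ and $X/t(X)\in\T^{\perp}$, which is axiom (Tors.2) for the pair $(\T,\T^{\perp})$.

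It remains to verify axiom (Tors.1), namely $\T={}^{\perp}(\T^{\perp})$ (the identity $\T^{\perp}=({}^{\perp}(\T^{\perp}))^{\perp}$ being then automatic, or simply part of the same argument). The inclusion $\T\subseteq{}^{\perp}(\T^{\perp})$ is immediate since any map from an object of $\T$ to an object of $\T^{\perp}$ is zero by definition. For the reverse inclusion, take $X\in{}^{\perp}(\T^{\perp})$; from the sequence just constructed, the composite $X\twoheadrightarrow X/t(X)$ is a map from $X$ to an object of $\T^{\perp}$, hence zero, so $X=t(X)\in\T$. This closes the argument. The main obstacle — and the only place care is needed — is the set-theoretic point that the trace is a legitimate submodule (a set, not a proper-class, sum of subobjects): this is exactly where one invokes that $\mod\A$ is Grothendieck with a generating set (Lem.\,\ref{mod_is_groth}), so that for fixed $X$ the submodules of $X$ of the form $\Im(T\to X)$ form a set, and one may choose a representative set of such maps; everything else is the routine $\hom$-functor bookkeeping sketched above.
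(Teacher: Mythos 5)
Your argument is correct and is exactly the classical proof: the paper does not actually prove this lemma (it is stated as well-known with a reference to Stenstr\"om), and your construction of the torsion radical as the trace $t(X)=\tr_{\T}(X)$, together with the maximality argument showing $X/t(X)\in\T^{\perp}$ and the final identification $\T={}^{\perp}(\T^{\perp})$, is precisely the standard route, fitting the trace machinery of Lem.\,\ref{lemma_traces_are_functorial}. The only loose end is that for the substantive direction you treat only the torsion-class half; the torsionfree half is indeed dual, but since $(\mod\A)^{\op}$ is not a module category it deserves one line: replace the trace by the reject $\Rej_{\F}(X)=\bigcap\{\Ker(f): f\colon X\to F,\ F\in\F\}$ (a set-indexed intersection by well-poweredness), observe that $X/\Rej_{\F}(X)$ embeds in a product of objects of $\F$ and hence lies in $\F$, and replace your pullback by a pushout to see that any map $\Rej_{\F}(X)\to F$ with $F\in\F$ vanishes.
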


Any torsion pair induces an ideal of $\A$, as shown in the following lemma:

\begin{lemma}\label{torsion_ideal_lem}
Let $\mathbf{t}=(\T,\F)$ be a torsion pair in $\mod{\A}$. There is an ideal $t(\A)$ of $\A$ defined as $t(\A)(a,b):=t(H_b)(a)$, for all $a,b\in\Ob(\A)$.
%
\end{lemma}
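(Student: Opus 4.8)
The plan is to check directly that the assignment $(a,b)\mapsto t(H_b)(a)$ defines a sub-$\A$-bimodule of the regular bimodule $\A(-,-)$, which (as recalled in Section~\ref{subs_traces_and_bimodules} and Section~\ref{Ideals and quotient categories}) is the same thing as an ideal of $\A$. The only fact I need about the torsion radical is that $t\colon\mod\A\to\mod\A$ is a subfunctor of the identity: for every $M$ there is a canonical inclusion $t(M)\leq M$, and every morphism $\phi\colon M\to N$ satisfies $\phi(t(M))\subseteq t(N)$ (the latter holding because $\phi(t(M))$ is a quotient of the torsion module $t(M)$, hence torsion, hence contained in the largest torsion submodule $t(N)$ of $N$).

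First I would observe that $t(H_b)\leq H_b$ forces $t(H_b)(a)$ to be a subgroup of $H_b(a)=\A(a,b)$, so the formula does produce a subgroup $t(\A)(a,b)\leq\A(a,b)$ for all $a,b\in\Ob(\A)$. Next, the statement that $t(H_b)$ is a submodule of $H_b$ is precisely the statement that it is a subfunctor $\A^{\op}\to\Ab$; hence, for any $r\colon a'\to a$ in $\A$, the map $H_b(r)=(-\circ r)$ carries $t(H_b)(a)$ into $t(H_b)(a')$. In other words $f\circ r\in t(\A)(a',b)$ whenever $f\in t(\A)(a,b)$, which takes care of the (contravariant) action in the first variable.

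Then, for the (covariant) action in the second variable, I would use that a morphism $l\colon b\to b'$ in $\A$ induces a morphism $H_l=(l\circ-)\colon H_b\to H_{b'}$ in $\mod\A$; since $t$ is a subfunctor of the identity, $H_l(t(H_b))\subseteq t(H_{b'})$, and evaluating at any object $a'$ gives $l\circ g\in t(\A)(a',b')$ for every $g\in t(\A)(a',b)$. Composing the two observations, for arbitrary $r\colon a'\to a$, $l\colon b\to b'$ and $f\in t(\A)(a,b)$ we obtain $l\circ f\circ r\in t(\A)(a',b')$, i.e. the morphism $\A(r,l)=(l\circ-\circ r)$ restricts to a map $t(\A)(a,b)\to t(\A)(a',b')$. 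This is exactly the compatibility needed for $t(\A)$ to be a sub-bifunctor of $\A(-,-)$, and hence an ideal.

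I do not expect any genuine obstacle here: the argument is a short bookkeeping exercise. The one point to keep straight is that ``$t$ is a subfunctor of the identity'' is being invoked twice in slightly different guises — as ``$t(H_b)$ is a submodule (hence subfunctor) of $H_b$'' for the first-variable action, and as ``$t$ sends the morphism $H_l$ to its restriction'' for the second-variable action — and one should record explicitly that $\A(\alpha,\beta)$ restricted to $t(\A)$ is well defined, which is precisely the displayed closure property $l\circ f\circ r\in t(\A)(a',b')$.
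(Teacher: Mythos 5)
Your proof is correct and follows essentially the same route as the paper's: both verifications reduce to the fact that $t$ is a subfunctor of the identity on $\mod\A$. The only (cosmetic) difference is in the contravariant variable, where you use directly that $t(H_b)$ is a subfunctor of $H_b$, whereas the paper corestricts the Yoneda morphism $\A(-,f)\colon H_a\to t(H_b)$ and composes with $\A(-,r)$ --- both arguments are equally valid.
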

\begin{proof}
Let $f\in t(\A)(a,b)$, $r\in \A(a',a)$, $l\in \A(b,b')$,  and let us verify that $f\circ r\in t(\A)(a',b)$ and that $l\circ f\in t(\A)(a,b')$. Indeed, since the torsion radical  $t$ is an additive subfunctor of $\id_{\mod{\A}}$, then $\A(-,l)(t(H_b))\leq t(H_{b'})$. In particular, $f\circ l=\A(a,l)(f)\in  t(H_{b'})(a)=t(\A)(a,b')$. On the other hand, the fact that $f\in t(\A)(a,b)$, means exactly that the image of the map $\A(-,f)\colon H_a\to H_b$ is torsion, thus we can corestrict to obtain a map $\A(-,f)\colon H_a\to t(H_b)$. It is now clear that also the composition $\A(-,f)\circ\A(-,r)\colon H_{a'}\to H_b$  takes values in $t(H_b)$, so that $f\circ r\in t(\A)(a',b)$.
\end{proof}

\subsection{Gabriel's bijection}\label{grabriel_subs}

The following definition appears in \cite{RG} and earlier, under the name of ``linear topology'', in \cite{Lo}, and it can be thought of as an additive version of the notion of Grothendieck topology (see, for example, \cite{MM}).

\begin{definition} \label{def.Grothendieck-topology}
A {\bf (linear) Grothendieck topology} on $\A$ is a family $\mathbf{G}=\{\mathbf{G}_a:a\in\Ob(\A)\}$, where $\mathbf{G}_a$ is a set of submodules of the representable $\A$-module $\A(-,a)$, for each $a\in\Ob(\A)$, satisfying:
\begin{itemize}
\item[\rm ({\bf Id})] the {\bf Identity axiom}, $H_a\in\mathbf{G}_a$, for each  $a\in\Ob(\A)$;
\item[\rm ({\bf Pb})] the {\bf Pullback axiom}, for $R\in\mathbf{G}_a$ and $r\colon a'\to a$ in $\A$, consider the following pullback square:
\[
\xymatrix{
r^{-1}R\, \ar@{^(.>}[rr]\ar@{.>}[d]\ar@{}[drr]|{\text{\scriptsize P.B.}}&&H_{a'}\ar[d]^{\A(-,r)}\\
R\, \ar@{^(->}[rr]&&H_a.
}
\] 
Then, $r^{-1}R$ is in $\mathbf{G}_{a'}$;
\item[\rm ({\bf Glue})] the {\bf Glueing axiom}, given $R\leq H_a$, suppose that there exists $S\in\mathbf{G}_a$ such that, for any $a'\in\Ob(\A)$ and any $r\in S(a')\leq\A(a',a)$, one has that $r^{-1}R$ is in $\mathbf{G}_{a'}$. Then, $R\in\mathbf{G}_a$. 
\end{itemize}
\end{definition}

Let us remark that, if a ring $R$ is viewed as a preadditive category with one object,  then ``Grothendieck topology'' and ``Gabriel topology'' on $R$ are synonymous. The following result is part of \cite{Lo} and \cite[Prop.\,1.8]{RG}.

\begin{lemma} \label{lem.axioms-Up-Int}
Let $\mathbf{G}=\{\mathbf{G}_a:a\in\Ob(\A)\}$ be a Grothendieck topology on $\A$. The following assertions hold true, for each $a\in\Ob(\A)$:
\begin{enumerate}[\rm (1)]
\item if $R\leq S\leq H_a$ and $R\in\mathbf{G}_a$, then also $S\in\mathbf{G}_a$;
\item if $R_1,\dots ,R_m\in\mathbf{G}_a$, then $R_1\cap\dots \cap R_m\in\mathbf{G}_a$.
\end{enumerate}
\end{lemma}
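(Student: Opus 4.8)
The plan is to deduce both statements directly from the three axioms of a Grothendieck topology, using the Pullback and Glueing axioms as the main tools. For part (1), suppose $R\leq S\leq H_a$ with $R\in\mathbf{G}_a$. I would like to apply the Glueing axiom to $S$: I need to exhibit some $S'\in\mathbf{G}_a$ such that $r^{-1}S\in\mathbf{G}_{a'}$ for every $a'\in\Ob(\A)$ and every $r\in S'(a')$. The natural choice is $S':=R$, which lies in $\mathbf{G}_a$ by hypothesis. So fix $a'$ and $r\in R(a')\leq\A(a',a)$; by the Pullback axiom $r^{-1}R\in\mathbf{G}_{a'}$, and since $R\leq S$ one has $r^{-1}R\leq r^{-1}S\leq H_{a'}$. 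The only point that still needs the induction hypothesis is that ``$r^{-1}R\in\mathbf{G}_{a'}$ and $r^{-1}R\leq r^{-1}S$ imply $r^{-1}S\in\mathbf{G}_{a'}$'' --- but this is exactly statement (1) applied over the object $a'$. To avoid circularity I would instead observe the following stronger fact first: if $r\in R(a')$ then $r^{-1}R=H_{a'}$, because the pullback of $R\hookrightarrow H_a$ along $\A(-,r)$ is, evaluated at any $c\in\Ob(\A)$, the set $\{t\in\A(c,a'):r\circ t\in R(c)\}$, and this is all of $\A(c,a')$ precisely because $r\in R(a')$ forces $r\circ t\in R(c)$ by the ideal/submodule property of $R$. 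Hence $r^{-1}R=H_{a'}$, which lies in $\mathbf{G}_{a'}$ by the Identity axiom, and therefore $r^{-1}S=H_{a'}\in\mathbf{G}_{a'}$ as well (since $H_{a'}\leq r^{-1}S\leq H_{a'}$). Now the Glueing axiom applied to $S$ with witness $S'=R$ gives $S\in\mathbf{G}_a$.

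For part (2) it suffices, by induction, to treat $m=2$: given $R_1,R_2\in\mathbf{G}_a$ I want $R_1\cap R_2\in\mathbf{G}_a$. Again I would use the Glueing axiom, this time with witness $S'=R_1\in\mathbf{G}_a$. Fix $a'$ and $r\in R_1(a')$; I must show $r^{-1}(R_1\cap R_2)\in\mathbf{G}_{a'}$. Since pullback commutes with intersection of submodules, $r^{-1}(R_1\cap R_2)=r^{-1}R_1\cap r^{-1}R_2$. By the computation above, $r\in R_1(a')$ gives $r^{-1}R_1=H_{a'}$, so $r^{-1}(R_1\cap R_2)=r^{-1}R_2$, and $r^{-1}R_2\in\mathbf{G}_{a'}$ by the Pullback axiom applied to $R_2\in\mathbf{G}_a$. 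Thus the hypothesis of the Glueing axiom is met, and we conclude $R_1\cap R_2\in\mathbf{G}_a$. The general case follows by an evident induction on $m$ (and $m=1$ is trivial, while the convention for an empty intersection gives $H_a$, which is in $\mathbf{G}_a$ by Identity).

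The only genuine content beyond bookkeeping is the elementary but crucial observation that $r^{-1}R=H_{a'}$ whenever $r$ is a ``section'' of $R$, i.e. $r\in R(a')$; everything else is a formal consequence of the axioms together with the fact that pullback of submodules along a fixed morphism preserves inclusions and finite intersections. The main obstacle to watch for is making sure this pullback computation is carried out at the level of the functors (evaluating $r^{-1}R$ at each object $c$ and using that $R$ is a submodule, hence stable under precomposition) rather than merely for the ambient object $a'$, so that the Glueing axiom can be invoked with its quantifier over all $a'\in\Ob(\A)$.
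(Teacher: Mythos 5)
Your argument is correct. The paper itself gives no proof of this lemma (it is quoted from Lowen and from Ramos Gonz\'alez's thesis), but your derivation from the three axioms is the standard one: the crux is precisely the observation that $r^{-1}R=H_{a'}$ whenever $r\in R(a')$, since $R$ being a subfunctor of $H_a$ makes $r\circ t\in R(c)$ for every $t\in\A(c,a')$; after that, (1) follows from ({\bf Glue}) with witness $R$ and the Identity axiom, and (2) follows from ({\bf Glue}) with witness $R_1$ together with the pointwise identity $r^{-1}(R_1\cap R_2)=r^{-1}R_1\cap r^{-1}R_2$ and the Pullback axiom. The induction on $m$ and the care taken to evaluate the pullback at every object $c$ (so that the quantifier in the Glueing axiom is genuinely satisfied) are both handled properly.
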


We are now going to prove that Grothendieck topologies on $\A$ are in bijection with hereditary torsion pairs in $\mod \A$ (see Thm.\,\ref{thm.Gabriel bijection for small categories}). In the following lemma we show how a Grothendieck topology induces a hereditary torsion class, while the opposite direction is illustrated in Lem.\,\ref{defining_psi}.

\begin{lemma}\label{defining_phi}
Let $\mathbf{G}=\{\mathbf G_a:a\in\Ob(\A)\}$ be a Grothendieck topology on $\A$ and define 
\[
\mathcal{T}^\mathbf{G}:=\Gen\{H_a/R : a\in\Ob(\A),\, R\in\mathbf{G}_a\}\subseteq \mod{\A}.
\] 
Then the following statements hold true:
\begin{enumerate}[\rm (1)]
\item an $\A$-module $T$ is in $\mathcal{T}^\mathbf{G}$ if, and only if, $\Ker(\varphi)\in \mathbf{G}_a$ for every map $\varphi \colon H_a\to T$;
\item $\T^\mathbf{G}$ is a hereditary torsion class in $\mod{\A}$.
\end{enumerate} 
\end{lemma}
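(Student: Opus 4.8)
The plan is to prove the two statements in tandem, since the ``only if'' direction of (1) is essentially built into the definition of $\mathcal{T}^{\mathbf{G}}$ as a $\Gen$-class, and the ``if'' direction will be the engine that drives the proof of (2).

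First I would establish (1). The ``if'' direction: suppose every map $\varphi\colon H_a\to T$ has $\Ker(\varphi)\in\mathbf{G}_a$. Pick an epimorphism $\pi=(\pi_i)_I\colon\coprod_{i\in I}H_{a_i}\twoheadrightarrow T$ (which exists since $\{H_a\}$ is a family of projective generators, Lem.~\ref{mod_is_groth}). Each component $\pi_i\colon H_{a_i}\to T$ has $\Ker(\pi_i)\in\mathbf{G}_{a_i}$ by hypothesis, so $\pi_i$ factors through $H_{a_i}/\Ker(\pi_i)$, which is one of the generators of $\mathcal{T}^{\mathbf{G}}$ (or a submodule thereof --- actually it is a quotient of $H_{a_i}$ by a member of $\mathbf{G}_{a_i}$, hence literally of the stated form), and therefore $T$ is a quotient of $\coprod_i H_{a_i}/\Ker(\pi_i)\in\mathcal{T}^{\mathbf{G}}$, giving $T\in\mathcal{T}^{\mathbf{G}}$. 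For the ``only if'' direction, suppose $T\in\mathcal{T}^{\mathbf{G}}$ and fix $\varphi\colon H_a\to T$. By definition there is an epimorphism $\psi\colon\coprod_{j}H_{b_j}/R_j\twoheadrightarrow T$ with $R_j\in\mathbf{G}_{b_j}$. Since $H_a$ is projective, $\varphi$ lifts to a map $\tilde\varphi\colon H_a\to\coprod_j H_{b_j}/R_j$, whose image lands in a finite subcoproduct; write $K:=\Ker(\varphi)$ and note $H_a/K\hookrightarrow\coprod_{j\in F}H_{b_j}/R_j$ for a finite $F$. By Yoneda, each composite $H_a\to H_{b_j}/R_j$ is of the form $H_{r_j}$ followed by projection, with $r_j\in\A(a,b_j)$, and one checks $r_j^{-1}R_j\subseteq$ (something controlling $K$); more precisely, $K=\bigcap_{j\in F}\ker(H_a\to H_{b_j}/R_j)=\bigcap_{j\in F}r_j^{-1}R_j$. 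Each $r_j^{-1}R_j\in\mathbf{G}_a$ by the Pullback axiom (Pb), and a finite intersection of members of $\mathbf{G}_a$ is again in $\mathbf{G}_a$ by Lem.~\ref{lem.axioms-Up-Int}(2); hence $K\in\mathbf{G}_a$, as desired.

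Next I would prove (2). By Lem.~\ref{construction_of_tor_rad} it suffices to show $\mathcal{T}^{\mathbf{G}}$ is closed under quotients, coproducts, extensions, and subobjects. Closure under quotients and coproducts is immediate from the $\Gen$-description. For subobjects: let $T'\leq T$ with $T\in\mathcal{T}^{\mathbf{G}}$, and let $\varphi\colon H_a\to T'$; composing with the inclusion gives a map $H_a\to T$ whose kernel --- which equals $\Ker(\varphi)$ --- lies in $\mathbf{G}_a$ by part (1) applied to $T$, so $T'\in\mathcal{T}^{\mathbf{G}}$ by part (1) applied to $T'$. For extensions: let $0\to T'\to T\to T''\to 0$ be exact with $T',T''\in\mathcal{T}^{\mathbf{G}}$, and let $\varphi\colon H_a\to T$. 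Let $p\colon T\to T''$ be the surjection; then $p\varphi\colon H_a\to T''$ has $S:=\Ker(p\varphi)\in\mathbf{G}_a$. The restriction $\varphi|_S\colon S\to T$ factors through $T'$, so for each $a'$ and each $r\in S(a')\leq\A(a',a)$, the composite $H_{a'}\xrightarrow{H_r}S\xrightarrow{\varphi|_S}T'$ has kernel in $\mathbf{G}_{a'}$ by part (1) applied to $T'\in\mathcal{T}^{\mathbf{G}}$; one identifies this kernel with $r^{-1}\Ker(\varphi)$ (the pullback of $\Ker(\varphi)\leq H_a$ along $H_r$). Hence, for all such $r$ we get $r^{-1}\Ker(\varphi)\in\mathbf{G}_{a'}$, and the Glueing axiom (Glue) --- applied with $R=\Ker(\varphi)$ and this $S$ --- yields $\Ker(\varphi)\in\mathbf{G}_a$. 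By part (1), $T\in\mathcal{T}^{\mathbf{G}}$.

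The main obstacle I anticipate is the bookkeeping in the extension case: one must carefully verify that, for $r\in S(a')$, the kernel of the composite $H_{a'}\to S\to T'$ coincides with the pullback $r^{-1}\Ker(\varphi)$ appearing in the Glueing axiom, i.e.\ that ``kernel of $\varphi$ restricted along $r$ and then landing in $T'$'' is literally $r^{-1}(\Ker\varphi)$ --- this uses that $r$ lands in $S=\Ker(p\varphi)$ so that the composite $H_{a'}\to T$ does factor through $T'$, and that kernels and pullbacks of submodules are computed componentwise (Lem.~\ref{mod_is_groth}). Everything else is a routine application of projectivity of the $H_a$, the Yoneda lemma, and the three axioms (Id), (Pb), (Glue) together with their consequences in Lem.~\ref{lem.axioms-Up-Int}. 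Note that the Identity axiom is what guarantees $0\in\mathcal{T}^{\mathbf{G}}$ and more generally is used implicitly to see the generators are well-defined, while (Pb) powers part (1) and (Glue) powers the extension closure.
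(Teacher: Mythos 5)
Your proof of part (1) follows essentially the same route as the paper's: project onto the generators for the ``if'' direction, and for the ``only if'' direction lift $\varphi$ through a finite subcoproduct by projectivity, apply Yoneda to identify the component kernels with pullbacks $r_j^{-1}R_j$, and invoke ({\bf Pb}) together with Lem.~\ref{lem.axioms-Up-Int}. One small inaccuracy: the asserted equality $\Ker(\varphi)=\bigcap_{j\in F}r_j^{-1}R_j$ (and the embedding $H_a/\Ker(\varphi)\hookrightarrow\coprod_{j\in F}H_{b_j}/R_j$) need not hold, since the epimorphism $\psi$ onto $T$ may identify things further; what you actually get is $\Ker(\tilde\varphi)=\bigcap_{j\in F}r_j^{-1}R_j\subseteq\Ker(\varphi)$. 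This is harmless: upward closure, Lem.~\ref{lem.axioms-Up-Int}(1), then gives $\Ker(\varphi)\in\mathbf{G}_a$. (The paper's own proof has the same implicit step from $\Ker(\bar\varphi)$ to $\Ker(\varphi)$.)

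For part (2) your argument genuinely departs from the paper, which simply cites \cite[Prop.\,2.7]{Lo} at this point. You instead give a self-contained verification via Lem.~\ref{construction_of_tor_rad}: closure under quotients and coproducts from the $\Gen$-description, closure under subobjects from the kernel criterion of part (1), and closure under extensions by combining part (1) with the Glueing axiom, where the key identification is that for $r\in S(a')$ with $S=\Ker(p\varphi)$ the kernel of $\varphi\circ H_r$ is exactly the pullback $r^{-1}\Ker(\varphi)$ of Def.~\ref{def.Grothendieck-topology}, and the composite lands in $T'$ because $H_r$ factors through $S$. This checks out, and it is arguably the more informative argument: it makes visible exactly which axiom powers which closure property (({\bf Pb}) for subobjects via part (1), ({\bf Glue}) for extensions), at the cost of reproving what the cited reference already contains.
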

\begin{proof}
(1) Given $T\in\mathcal{T}^\mathbf{G}$, there is an epimorphism $p\colon\coprod_{i\in I}(H_{a_i}/R_i)\twoheadrightarrow T$, where $R_i\in\mathbf{G}_{a_i}$, for each $i\in I$.  Furthermore, given $\varphi\colon H_a\to T$, one can use that $H_a$ is finitely generated projective to show that there is a morphism $\bar\varphi\colon H_a\to\coprod_{i\in F}(H_{a_i}/R_i)$ for a finite subset $F\subseteq I$, such that $\varphi=p\circ\iota_F\circ \bar\varphi$, where $\iota_F\colon \coprod_{i\in F}(H_{a_i}/R_i)\to \coprod_{i\in I}(H_{a_i}/R_i)$ is the inclusion. Clearly, $\bar\varphi$ is described by a vector $\bar\varphi=(\bar\varphi_i)_{i\in F}$ with $\bar\varphi_j\colon H_a \to (H_{a_i}/R_j)$, for all $j\in F$ and, using again the projectivity of $H_a$ and the Yoneda Lemma, each of the morphisms $\bar\varphi_j$ factors in the form $\bar\varphi_j=p_j\circ \A(-,r_j)$, where $p_j\colon H_{a_j}\to H_{a_j}/R_j$ is the projection and $r_j \colon a\to a_j$ is a suitable morphism in $\A$. To conclude, notice that $\Ker(\bar\varphi)=\bigcap_{i\in F}\Ker(\bar\varphi_i)$ so, by Lem.\,\ref{lem.axioms-Up-Int}, it is enough to verify that $\Ker(\bar\varphi_j)\in\mathbf{G}_{a_j}$, for all $j\in I$. But, with the notation of Def.\,\ref{def.Grothendieck-topology}, $\Ker(\bar\varphi_j)=r_j^{-1}R_j$, and so $\Ker(\bar\varphi_j)\in\mathbf{G}_a$ by the axiom ({\bf Pb}).

On the other hand, given $T\in\mod \A$  such that $\Ker(\varphi)\in \mathbf{G}_a$ for every morphism \mbox{$\varphi \colon H_a\to T$}, consider  an epimorphism $q\colon\coprod_{i\in I}H_{a_i}\twoheadrightarrow T$ and take the compositions $q\circ\iota_j\colon H_{a_j}{\to}T$, where $\iota_j\colon H_{a_j}\to \coprod_{i\in I}H_{a_i}$ is the inclusion, so that $R_j:=\Ker (q\circ\iota_j)\in\mathbf{G}_{a_j}$, for all $j\in I$. We then get an induced epimorphism $\coprod_{i\in I}(H_{a_i}/R_i)\twoheadrightarrow T$, showing that $T\in\T^\mathbf{G}$.

\smallskip\noindent
(2) follows by part (1) and \cite[Prop.\,2.7]{Lo}.
\end{proof}

\begin{lemma}\label{defining_psi}
Let $\T$ be a hereditary torsion class in $\mod{\A}$ and define 
\[
\mathbf{G}^\mathcal{T}:=\{\mathbf{G}^\T_a:a\in\Ob(\A)\},\quad\text{with}\quad \mathbf{G}^\T_a:=\{R\leq\A(-,a):\A(-,a)/R\in\T\}.
\] 
Then, $\mathbf{G}^\mathcal{T}$ is a Grothendieck topology on $\A$. 
\end{lemma}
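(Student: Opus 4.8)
The plan is to verify the three axioms (\textbf{Id}), (\textbf{Pb}) and (\textbf{Glue}) of Definition \ref{def.Grothendieck-topology} for the family $\mathbf{G}^{\T}$, using throughout that, by Lemma \ref{construction_of_tor_rad}, $\T$ is closed under quotients, extensions and coproducts and, being hereditary, also under subobjects. The axiom (\textbf{Id}) is immediate: $H_a/H_a=0\in\T$, so $H_a\in\mathbf{G}^{\T}_a$ for every $a\in\Ob(\A)$.

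For (\textbf{Pb}), given $R\in\mathbf{G}^{\T}_a$ and $r\colon a'\to a$ in $\A$, I would observe that $r^{-1}R$ is precisely the kernel of the composite $H_{a'}\xrightarrow{\A(-,r)}H_a\twoheadrightarrow H_a/R$; hence $H_{a'}/r^{-1}R$ is isomorphic to the image of this composite, which is a subobject of $H_a/R\in\T$. By heredity, $H_{a'}/r^{-1}R\in\T$, that is, $r^{-1}R\in\mathbf{G}^{\T}_{a'}$.

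The axiom (\textbf{Glue}) is the only step requiring real work. Suppose $R\leq H_a$ and there is $S\in\mathbf{G}^{\T}_a$ such that $r^{-1}R\in\mathbf{G}^{\T}_{a'}$ for every $a'\in\Ob(\A)$ and every $r\in S(a')$; I must show $H_a/R\in\T$. First I would consider the short exact sequence
\[
0\to (R+S)/R\to H_a/R\to H_a/(R+S)\to 0 ,
\]
and note that $H_a/(R+S)$ is a quotient of $H_a/S\in\T$, hence lies in $\T$; by closure under extensions it then suffices to prove $(R+S)/R\in\T$. To this end I would use that $S$ is the sum $\sum\Im(\A(-,r)\colon H_{a'}\to H_a)$ taken over all $a'\in\Ob(\A)$ and all $r\in S(a')$ (each such $\A(-,r)$ factoring through $S$ since $r\in S(a')$, and these images jointly covering $S$ by the Yoneda Lemma). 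Composing each $\A(-,r)$ with the projection $H_a\twoheadrightarrow H_a/R$ gives a map $H_{a'}\to(R+S)/R$ whose kernel is exactly $r^{-1}R$, by the same computation as in the (\textbf{Pb}) step (here one uses that $(R+S)/R$ is a submodule of $H_a/R$). Thus $H_{a'}/r^{-1}R$ surjects onto $\Im(H_{a'}\to(R+S)/R)$, which therefore belongs to $\T$ by hypothesis and closure under quotients. Finally $(R+S)/R$ is the sum of these images, hence a quotient of their coproduct, so $(R+S)/R\in\T$ by closure under coproducts and quotients, and the argument is complete.

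I expect the only genuine obstacle to be the bookkeeping in the (\textbf{Glue}) step: the two points to keep straight are the identification of the relevant kernels with the pullbacks $r^{-1}R$, and the fact that a submodule of $H_a$ is the sum of the images of the Yoneda maps attached to its own ``elements''. Both are routine manipulations inside the Grothendieck category $\mod\A$ (Lemma \ref{mod_is_groth}), so no further difficulty is anticipated.
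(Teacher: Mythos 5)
Your proposal is correct and follows essentially the same route as the paper: both reduce (\textbf{Glue}) to the short exact sequence $0\to (R+S)/R\to H_a/R\to H_a/(R+S)\to 0$ (the paper writes the kernel term as $S/(S\cap R)$), and both then cover $S$ by representables, identify the kernels of the induced maps to $H_a/R$ with the pullbacks $r^{-1}R$, and conclude by closure of $\T$ under coproducts and quotients. The only cosmetic difference is that the paper picks one epimorphism $\coprod_i H_{a_i}\twoheadrightarrow S$ while you use the canonical family of all Yoneda maps attached to elements of $S$; this changes nothing of substance.
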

\begin{proof}
It is immediate to check that $\mathbf{G}$ satisfies axioms ({\bf Id}) and  ({\bf Pb}) of Def.\,\ref{def.Grothendieck-topology}, so we only need to check axiom  ({\bf Glue}).  Consider a submodule $R\leq H_a$  and suppose that there exists $S\in\mathbf{G}^\T_a$ such that, for any $a'\in\Ob(\A)$ and $r\in S(a')\leq\A(a',a)$,  $r^{-1}R$ is in $\mathbf{G}^\T_{a'}$. Consider the following short exact sequence:
\[
0\rightarrow S/(S\cap R)\to H_a/R\to H_a/(S+R)\rightarrow 0.
\] 
Then $H_a/(S+R)\in\T$ since it is a quotient of $H_a/S$. Hence, it is enough to verify that ${S}/({S\cap R})\in\T$. Indeed, fix an epimorphism $q\colon \coprod_{i\in I}H_{a_i}\twoheadrightarrow S$ and, for each $j\in I$, consider the following composition of $q$ with the obvious inclusions:
\[
\xymatrix{
\varphi_j\colon H_{a_j}\ar@{->}[r]^{}&\coprod_{i\in I}H_{a_i}\ar@{->>}[r]^{q}& S\leq H_{a}.
}
\] 
By hypothesis, $\varphi_j^{-1}R\in \mathbf{G}^\T_{a_j}$ for any $j\in I$, so that $H_{a_j}/\varphi_j^{-1}R\in \T$. Furthermore, there is clearly an epimorphism $\coprod_{i\in I}(H_{a_i}/\varphi_i^{-1}R)\twoheadrightarrow S/(S\cap R)$, so that $S/(S\cap R)\in \T$ as desired.
%
%
%
%
\end{proof}

\begin{theorem} \label{thm.Gabriel bijection for small categories}
Let $\A$ be a small preadditive category. Then there is a one-to-one correspondence
\[
\xymatrix@R=0pt{
\Phi:\S_1:=
{\left\{
\begin{matrix}
\text{Grothendieck}\\ 
\text{topologies on $\A$}
\end{matrix}
\right\}}
\ar@{<->}[rr]^{1:1}&&
{\left\{
\begin{matrix}
\text{Hereditary torsion}\\ 
\text{classes in $\mod{\A}$}
\end{matrix}
\right\}}
=:\S_2:\Psi\\
\mathbf{G}\ar@{|->}[rr]&&\Phi(\mathbf{G}):=\mathcal{T}^\mathbf{G}\\
\Psi(\T):=\mathbf{G}^\mathcal{T}\ar@{<-|}[rr]&&\T
}
\]
where $\T^{\mathbf G}$ and $\mathbf G^{\T}$ are defined as in Lem.\,\ref{defining_phi} and \ref{defining_psi}, respectively.
%
\end{theorem}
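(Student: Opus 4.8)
The plan is to show that the two assignments $\Phi$ and $\Psi$ in the statement are mutually inverse; since Lem.\,\ref{defining_phi} and Lem.\,\ref{defining_psi} already guarantee that $\Phi$ lands in $\S_2$ and $\Psi$ in $\S_1$, all that remains is to check the identities $\Psi\circ\Phi=\id_{\S_1}$ and $\Phi\circ\Psi=\id_{\S_2}$, each of which splits into two easy inclusions.

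First I would verify $\Psi(\Phi(\mathbf{G}))=\mathbf{G}$, i.e.\ $\mathbf{G}^{\mathcal{T}^{\mathbf{G}}}=\mathbf{G}$, fixing $a\in\Ob(\A)$. One inclusion is immediate from the definitions: if $R\in\mathbf{G}_a$, then $H_a/R$ is one of the generators of $\mathcal{T}^{\mathbf{G}}$, so $H_a/R\in\mathcal{T}^{\mathbf{G}}$ and hence $R\in\mathbf{G}^{\mathcal{T}^{\mathbf{G}}}_a$. For the reverse inclusion, suppose $R\leq H_a$ with $H_a/R\in\mathcal{T}^{\mathbf{G}}$, and apply Lem.\,\ref{defining_phi}(1) to the canonical projection $\varphi\colon H_a\to H_a/R$: its kernel, which is exactly $R$, must then lie in $\mathbf{G}_a$. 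This settles the first identity.

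Next I would verify $\Phi(\Psi(\T))=\T$, i.e.\ $\mathcal{T}^{\mathbf{G}^{\T}}=\T$, for a hereditary torsion class $\T$, writing $\mathbf{G}^{\T}:=\Psi(\T)$. By construction $R\in\mathbf{G}^{\T}_a$ precisely when $H_a/R\in\T$, so every generator $H_a/R$ of $\mathcal{T}^{\mathbf{G}^{\T}}$ already lies in $\T$; since $\T$ is closed under quotients and coproducts (Lem.\,\ref{construction_of_tor_rad}), this gives $\mathcal{T}^{\mathbf{G}^{\T}}\subseteq\T$. For the converse, take $T\in\T$ and pick an epimorphism $q\colon\coprod_{i\in I}H_{a_i}\twoheadrightarrow T$; let $\iota_i$ be the $i$-th coproduct inclusion and set $R_i:=\Ker(q\circ\iota_i)\leq H_{a_i}$. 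Then $H_{a_i}/R_i\cong\Im(q\circ\iota_i)$ is a submodule of $T$, hence lies in $\T$ by heredity, so $R_i\in\mathbf{G}^{\T}_{a_i}$. Since $q$ annihilates $\coprod_i R_i$, it factors through an epimorphism $\coprod_i(H_{a_i}/R_i)\twoheadrightarrow T$, exhibiting $T\in\Gen\{H_a/R:a\in\Ob(\A),\,R\in\mathbf{G}^{\T}_a\}=\mathcal{T}^{\mathbf{G}^{\T}}$. Thus $\T\subseteq\mathcal{T}^{\mathbf{G}^{\T}}$, and the two identities together give the bijection.

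Both computations are essentially formal once Lem.\,\ref{defining_phi} and Lem.\,\ref{defining_psi} are available; the only place where a standing hypothesis enters in an essential way is the use of heredity of $\T$ in the last step, where it forces the submodule $\Im(q\circ\iota_i)$ of $T$ to be torsion and hence the kernels $R_i$ coming from an arbitrary presentation of $T$ to lie in the topology $\mathbf{G}^{\T}$. I therefore expect that step to be the main (still mild) obstacle, and in writing it up I would make transparent that $R_i\in\mathbf{G}^{\T}_{a_i}$ holds for \emph{every} chosen presentation of $T$, not just a distinguished one.
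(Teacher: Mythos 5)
Your proposal is correct and follows essentially the same route as the paper: both identities are verified by the same two-inclusion arguments, with Lem.\,\ref{defining_phi}(1) applied to the canonical projection $H_a\to H_a/R$ for one direction and heredity of $\T$ (forcing $\Im(q\circ\iota_i)\in\T$ and hence $R_i\in\mathbf{G}^{\T}_{a_i}$) for the other. Your write-up is in fact slightly more explicit than the paper's about why $q$ factors through $\coprod_i(H_{a_i}/R_i)$, but the content is the same.
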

\begin{proof}
The maps  $\Phi$ and $\Psi$ are well-defined by Lem.\,\ref{defining_phi} and \ref{defining_psi}; let us verify that they are inverse bijections. Consider first $\T\in \S_2$ and let us verify that $\T$ is equal to $\Phi\circ\Psi (\T)=\Gen(H_a/R:R\in \mathbf G_a^\T,\, a\in \Ob(\A))$. In fact, by the very definition of $\mathbf{G}_a^\T$, given $R\in\mathbf{G}^{\T}_a$, $H_a/R\in\T$ so $\Phi\circ\Psi(\T)\subseteq \T$. On the other hand, given $T\in \T$, consider an epimorphism $\coprod_{i\in I}H_{a_i}\twoheadrightarrow T$, that induces an epimorphism $\coprod_{i\in I}(H_{a_i}/R_i)\twoheadrightarrow T$, where each $H_{a_j}/R_j$ is a subobject of $T$, so it belongs to $\T$, that is, $R_j\in G_{a_j}^\T$. This shows that $T\in \Phi\circ\Psi(\T)$. 

On the other hand, given $\mathbf{G}\in\mathcal{S}_1$, let us show that $\mathbf G=\Psi\circ \Phi (\mathbf G)$, that is $\mathbf G_a=\mathbf G^{\T^{\mathbf G}}_a$, for each $a\in \Ob(\A)$. Indeed, given $R\in \mathbf G_a$, it is clear that $H_a/R\in \T^{\mathbf G}$, so that $R\in \mathbf G^{\T^{\mathbf G}}_a$ and $\mathbf G_a\subseteq\mathbf G^{\T^{\mathbf G}}_a$. On the other hand, given $S\in \mathbf G^{\T^{\mathbf G}}_a$, that is, given an $S\leq H_a$ such that $H_a/S\in \T^{\mathbf G}$, we known by Lem.\,\ref{defining_phi} that the kernel of any morphism of the form $H_b\to H_a/S$, for some $b\in\Ob(\A)$, is in $\mathbf G_b$. In particular, the kernel of the obvious projection $H_a\to H_a/S$, which is exactly $S$, does belong in $\mathbf G_a$, so that $\mathbf G^{\T^{\mathbf G}}_a\subseteq \mathbf G_a$, as desired. 
%
\end{proof}

\subsection{Hereditary torsion classes of finite type}\label{subs_torsion_fin_type}

Recall that a torsion pair $(\T,\F)$ in $\mod \A$ is said to be of {\bf finite type} provided $\F=\varinjlim \F$, that is, if $\F$ is closed under taking direct limits. As a well-known consequence of the definition, one can verify that both the torsion radical and the torsion coradical associated with a torsion pair of finite type do preserve direct limits. 

\begin{lemma} \label{prop.hereditary of finite type}
Let ${\t}=(\T,\F)$ be a hereditary torsion pair in $\mod \A$. Then ${\t}$ is of finite type if and only if there is a set $\mathcal{S}\subseteq \fpmod \A$ (where $\fpmod \A$ is the category of finitely presented $\A$-modules) such that $\F=\S^{\perp}$.
\end{lemma}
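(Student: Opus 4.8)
I would treat the two implications separately. The ``if'' direction is soft. Suppose $\F=\S^{\perp}$ for some set $\S\subseteq\fpmod\A$; recall that $\t$ is of finite type precisely when $\F$ is closed under direct limits. So let $(F_i)$ be a direct system in $\F$ with colimit $F$; for every $S\in\S$ the functor $\hom_\A(S,-)$ commutes with direct limits, hence $\hom_\A(S,F)\cong\varinjlim\hom_\A(S,F_i)=0$, so $F\in\S^{\perp}=\F$. Thus $\F$ is closed under direct limits and $\t$ is of finite type.

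For the converse, assume $\t$ is of finite type. By Theorem~\ref{thm.Gabriel bijection for small categories} (together with Lemma~\ref{defining_psi}) we have $\T=\Gen\{H_a/R:a\in\Ob(\A),\ R\le H_a,\ H_a/R\in\T\}$. The crucial step is to show that in this generating family one may restrict to \emph{finitely generated} $R$: for every $a\in\Ob(\A)$ and every $R\le H_a$ with $H_a/R\in\T$ there is a finitely generated submodule $R'\le R$ with $H_a/R'\in\T$ (equivalently, the Grothendieck topology $\mathbf G^{\T}$ of Section~\ref{grabriel_subs} is of finite type). To see this, write $R=\varinjlim_\lambda R_\lambda$ as the directed union of its finitely generated submodules; exactness of direct limits in $\mod\A$ gives $H_a/R=\varinjlim_\lambda H_a/R_\lambda$. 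Applying the torsion coradical $(1:t)\colon\mod\A\to\F$, which preserves direct limits (as $\t$ is of finite type) and preserves epimorphisms (being a left adjoint, see Section~\ref{recall_torsion_subs}), we obtain a directed system of \emph{epimorphisms} $(1:t)(H_a/R_\lambda)\twoheadrightarrow(1:t)(H_a/R_\mu)$ whose colimit is $(1:t)(H_a/R)=0$. Each $(1:t)(H_a/R_\lambda)$, being a quotient of $H_a$, is finitely generated; hence the canonical map from $(1:t)(H_a/R_\lambda)$ into some later stage $(1:t)(H_a/R_\mu)$ vanishes, and since that map is an epimorphism we get $(1:t)(H_a/R_\mu)=0$, i.e.\ $H_a/R_\mu\in\T$ with $R_\mu\le R$ finitely generated.

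Finally I would assemble the witnessing set. Put $\S:=\{H_a/R: a\in\Ob(\A),\ R\le H_a\text{ finitely generated},\ H_a/R\in\T\}$; this is a (small) set which, since $H_a$ is finitely generated projective (Lemma~\ref{description_fpp_lemma}) and $R$ is finitely generated, consists of finitely presented modules, so $\S\subseteq\fpmod\A$. As $\S\subseteq\T$ we have $\F=\T^{\perp}\subseteq\S^{\perp}$. For the reverse inclusion, let $F\in\S^{\perp}$; to prove $F\in\F=\T^{\perp}$ it suffices, since every $T\in\T$ admits an epimorphism $\coprod_i(H_{a_i}/R_i)\twoheadrightarrow T$ with each $H_{a_i}/R_i\in\T$ (so $\hom_\A(T,F)$ embeds into $\prod_i\hom_\A(H_{a_i}/R_i,F)$), to check that $\hom_\A(H_a/R,F)=0$ for every $R\le H_a$ with $H_a/R\in\T$. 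Given such an $R$, the finite-generation step provides a finitely generated $R'\le R$ with $H_a/R'\in\S$; any $\varphi\colon H_a/R\to F$ precomposed with the canonical epimorphism $H_a/R'\twoheadrightarrow H_a/R$ gives a map $H_a/R'\to F$, which vanishes because $F\in\S^{\perp}$, whence $\varphi=0$. Therefore $\F=\S^{\perp}$.

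I expect the main obstacle to be the finite-generation step, and within it the observation that the transition maps of the system $\bigl((1:t)(H_a/R_\lambda)\bigr)_\lambda$ are epimorphisms --- this (together with finite generation of the terms) is exactly what upgrades ``the colimit is zero'' to ``some term is zero'', and it is also the only place where the finite-type hypothesis is genuinely used (through the commutation of $(1:t)$ with the relevant direct limit).
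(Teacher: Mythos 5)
Your argument is correct, but it inverts the logical order of the paper and rests on a different technical core. The paper's proof of the ``only if'' direction (an adaptation of Golan's argument for rings) works with an arbitrary finitely generated torsion module $T$: it chooses a presentation $0\to R\to X\to T\to 0$ with $X$ finitely presented, pulls back $t(X/R_j)\leq X/R_j$ along $X\to X/R_j$ for the finitely generated $R_j\leq R$, uses finite presentability of $X$ to factor $\id_X$ through some stage $X_k$ of the resulting direct system, and then applies the Snake Lemma together with heredity of $\T$ to produce a finitely presented $X/R_k\in\T$ surjecting onto $T$; Proposition~\ref{prop.basis of finitely generated ideals} is afterwards deduced from the lemma. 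You instead prove the cyclic case first --- essentially Proposition~\ref{prop.basis of finitely generated ideals} itself --- by applying the coradical to the system $(H_a/R_\lambda)_\lambda$ and invoking the fact that a directed system of finitely generated objects with epimorphic transition maps and zero colimit is eventually zero (which is valid in the locally finitely generated category $\mod\A$, via $\Ker\bigl(M_{\lambda_0}\to\varinjlim M_\mu\bigr)=\bigcup_{\mu}\Ker\bigl(M_{\lambda_0}\to M_\mu\bigr)$); you then assemble $\F=\S^{\perp}$ from the generation of $\T$ by the cyclic modules $H_a/R$ supplied by Theorem~\ref{thm.Gabriel bijection for small categories}, which is where heredity enters for you. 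Both proofs use the finite-type hypothesis only through commuting the (co)radical past the relevant direct limit. Your version avoids the pullback construction and the Snake Lemma at the price of leaning on the Gabriel bijection; the paper's version directly yields the stronger fact that every finitely generated torsion module is a quotient of a finitely presented one. You also supply the routine ``if'' direction, which the paper omits.
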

\begin{proof}
We only need to check the ``only if'' part, which is an easy adaptation of the proof for modules over a ring (see the proof of implication (2)$\Rightarrow$(1) in \cite[Prop.\,42.9]{Go}), which we just outline. We need to check that each $T\in\T\cap\fg(\mod \A)$ (i.e., any finitely generated torsion $\A$-module) is a quotient of an object in $\T\cap\fpmod \A$. Consider an exact sequence 
\begin{equation}\label{torsion_fg_torsion_fp}
\xymatrix@C=15pt{
0\ar[r]& R\ar[rr]^{\lambda}&&X\ar[rr]^p&&T\ar[r]&0,
}
\end{equation}
with $T\in\T\cap\fg(\mod \A)$ and $X\in \fpmod \A$. Express $R$ as a direct union of its finitely generated subobjects $R=\bigcup_{j\in J}R_j$ and, for any $j\in J$, consider the following diagrams:
\[
\xymatrix@C=9pt@R=2pt{
&&&&&&&&0\ar[r]& R_j\ar@{=}[dd]\ar[rr]^{\lambda_j'}&&X_j\ar[dd]\ar[rr]^{p_j'}&&t(X/R_j)\ar[dd]\ar[r]&0\\
0\ar[r]& R_j\ar[rr]^{\lambda_j}&&X\ar[rr]^{p_j}&&X/R_j\ar[r]&0\\
&&&&&&&&0\ar[r]& R_j\ar[rr]^{\lambda_j}&&X\ar[rr]^{p_j}&&X/R_j\ar[r]&0
}
\]
where the one on the right hand side is obtained with a pullback from the other. Letting $j$ vary in $J$, we obtain two direct systems of short exact sequences $(0\rightarrow R_j\stackrel{}{\rightarrow}X\stackrel{}{\to}X/R_j\rightarrow 0)_{j\in J}$ and $(0\rightarrow R_j\stackrel{}{\rightarrow}X_j\stackrel{}{\to}t(X/R_j)\rightarrow 0)_{j\in J}$ whose direct limit is the sequence \eqref{torsion_fg_torsion_fp} (this is clear for the first sequence, while for the second one it is enough to use that the torsion radical preserves direct limits). Now, since $X$ is finitely presented, there is some $k\in J$ such that the canonical map $u_k\colon X_k\to X$  is a retraction, so that we obtain the following commutative diagram with exact rows:
\[
\xymatrix@C=30pt{
0 \ar[r] & R_k \ar[r]^{} \ar@{^(->}[d]& X_k \ar[r]^{} \ar@{>>}[d]^{u_k} & t(X/R_k) \ar[r] \ar[d]^{\alpha_k} & 0\\ 
0 \ar[r] & R \ar[r]_{} & X \ar[r]_{} & T \ar[r] & 0.
}
\]
Since $\T$ is hereditary, $\Ker (\alpha _k)\in \T$ and, applying the Snake Lemma to the above diagram, we obtain that $R/R_k$ is a quotient of $\Ker (\alpha _k)$, so that $R/R_k\in \T$. As a consequence, $X/R_k\in\T$, since it is an extension of $R/R_k$ and $(X/R_k)/(R/R_k)\cong X/R\in \T$. Furthermore, $X/R_k$ is also finitely presented, since $X\in \fpmod \A$ and $R_k\in \fg(\mod \A)$, hence $T\cong X/R$ is a quotient of $X/R_k\in \fpmod\A\cap \T$, as desired. 
\end{proof}

Recall that a {\bf basis} for a Grothendieck topology $\mathbf{G}=\{\mathbf{G}_a:a\in\Ob(\A)\}$ on $\A$ (see \cite{Lo}) is a family $\mathbf{B}=\{\mathbf{B}_a:a\in\Ob(\A)\}$ such that 
\begin{itemize}
\item $\mathbf{B}_a\subseteq\mathbf{G}_a$, for all $a\in\Ob(\A)$;
\item for each $R\in\mathbf{G}_a$ there exists $S\in\mathbf{B}_a$ such that $S\leq R$.
\end{itemize} 
We shall say that $\mathbf{B}$ is  {\bf a basis of finitely generated right ideals} of $\mathbf{G}$, when all the right $\A$-modules $R\in \mathbf{B}_a$ are finitely generated, for all $a\in \Ob(\A)$. As for modules over associative unital rings and, more generally, in locally finitely presented Grothendieck categories (see \cite[Prop.\,11.1.4]{Pr}), we have:

\begin{proposition} \label{prop.basis of finitely generated ideals}
A hereditary torsion pair ${\t}=(\T,\F)$ in $\mod{\A}$ is of finite type if, and only if, the Grothendieck topology $\mathbf G^{\T}$ (see Thm.\,\ref{thm.Gabriel bijection for small categories}) has a basis of finitely generated ideals. 
\end{proposition}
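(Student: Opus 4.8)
The plan is to prove the two implications separately, using Theorem~\ref{thm.Gabriel bijection for small categories} to pass between $\T$ and $\mathbf G^{\T}$ and Lemma~\ref{prop.hereditary of finite type} to recognize finiteness of type. Two standing facts I would use are: every submodule of a representable $H_a$ is the directed union of its finitely generated submodules (as $\mod\A$ is locally finitely generated), and the torsion coradical $(1:t)$ preserves direct limits whenever $\t$ is of finite type.

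For the ``only if'' part, assuming $\t$ of finite type, I would set $\mathbf B_a:=\{R\leq H_a: R\text{ finitely generated},\ H_a/R\in\T\}$ for each $a$; this is a set (the subobjects of $H_a$ form one) and is trivially contained in $\mathbf G^{\T}_a$, so the only thing to check is that every $R\in\mathbf G^{\T}_a$ contains some member of $\mathbf B_a$. Write $R=\bigcup_{j\in J}R_j$ as the directed union of its finitely generated submodules, so that $H_a/R=\varinjlim_j H_a/R_j$; since $(1:t)$ preserves direct limits and $(1:t)(H_a/R)=0$, we get $\varinjlim_j(1:t)(H_a/R_j)=0$. For $j\leq j'$ the transition $H_a/R_j\twoheadrightarrow H_a/R_{j'}$ induces an epimorphism $(1:t)(H_a/R_j)\twoheadrightarrow(1:t)(H_a/R_{j'})$, because the composite $H_a/R_j\twoheadrightarrow H_a/R_{j'}\twoheadrightarrow(1:t)(H_a/R_{j'})$ kills the torsion submodule $t(H_a/R_j)$. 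Writing $(1:t)(H_a/R_j)=H_a/K_j$ with these epimorphisms corresponding to $K_j\subseteq K_{j'}$, the $K_j$ form a directed family of submodules of $H_a$ with $H_a/\bigl(\bigcup_j K_j\bigr)=\varinjlim_j H_a/K_j=0$, hence $\bigcup_j K_j=H_a$; finite generation of $H_a$ then forces $K_j=H_a$ for some $j$, i.e.\ $(1:t)(H_a/R_j)=0$, i.e.\ $H_a/R_j\in\T$. Thus $R_j\in\mathbf B_a$ and $R_j\leq R$, so $\mathbf B=\{\mathbf B_a\}$ is a basis of finitely generated ideals.

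For the ``if'' part, let $\mathbf B=\{\mathbf B_a\}$ be a basis of $\mathbf G^{\T}$ consisting of finitely generated ideals and put $\S:=\{H_a/S: a\in\Ob(\A),\ S\in\mathbf B_a\}$. As $H_a$ is finitely presented and each $S$ finitely generated, $H_a/S$ is finitely presented, so $\S$ is a small subset of $\fpmod\A$; I claim $\F=\S^{\perp}$. By Theorem~\ref{thm.Gabriel bijection for small categories}, $\T=\Gen\{H_a/R:a\in\Ob(\A),\ R\in\mathbf G^{\T}_a\}$, so $M\in\F$ exactly when $\hom_\A(H_a/R,M)=0$ for all $a$ and all $R\in\mathbf G^{\T}_a$. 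Since $\mathbf B_a\subseteq\mathbf G^{\T}_a$, this gives $\F\subseteq\S^{\perp}$; conversely, if $M\in\S^{\perp}$ and $R\in\mathbf G^{\T}_a$, choosing $S\in\mathbf B_a$ with $S\leq R$ exhibits $H_a/R$ as a quotient of $H_a/S$, whence $\hom_\A(H_a/R,M)\hookrightarrow\hom_\A(H_a/S,M)=0$, so $M\in\F$. With $\F=\S^{\perp}$ and $\S\subseteq\fpmod\A$, Lemma~\ref{prop.hereditary of finite type} concludes that $\t$ is of finite type.

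The step I expect to need the most care is the colimit bookkeeping in the ``only if'' direction: one really must verify that the transition maps of $\{(1:t)(H_a/R_j)\}_j$ are epimorphisms, so that this system is, up to isomorphism, a directed system of quotients $H_a/K_j$ of the \emph{fixed} module $H_a$ with the $K_j$ increasing. Only with that structure does vanishing of the colimit translate into $\bigcup_j K_j=H_a$, and then finite generation of $H_a$ into $K_j=H_a$ for some $j$; without it, a directed system of nonzero finitely generated modules may well have zero colimit.
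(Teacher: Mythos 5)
Your proof is correct. The ``if'' direction coincides with the paper's: you take $\S$ to be the set of modules $H_a/S$ with $S$ in the basis, observe they are finitely presented, check $\F=\S^{\perp}$, and invoke Lemma~\ref{prop.hereditary of finite type}. The ``only if'' direction reaches the same conclusion as the paper (a finitely generated $R_j\leq R$ with $H_a/R_j\in\T$, starting from the same decomposition $R=\bigcup_j R_j$) but by a genuinely different route. The paper simply re-runs the proof of Lemma~\ref{prop.hereditary of finite type} with $X=H_a$ and $T=H_a/R$, which goes through pullbacks along $t(X/R_j)\to X/R_j$, the retraction $X_k\to X$ obtained from finite presentability of $X$, and the Snake Lemma. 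You instead exploit that $X=H_a$ is representable, so that every stage stays a quotient of the fixed finitely generated object $H_a$: applying the direct-limit-preserving coradical $(1:t)$ to $H_a/R=\varinjlim_j H_a/R_j$ yields a directed system of quotients $H_a/K_j$ with increasing kernels and zero colimit, and finite generation of $H_a$ forces some $K_j=H_a$. Your verification that the transition maps on coradicals are epimorphisms is exactly the point that makes this work, and it is correct. What your approach buys is a shorter, self-contained argument avoiding the pullback/Snake Lemma machinery (at the price of using the preservation of direct limits by $(1:t)$, which the paper states just before Lemma~\ref{prop.hereditary of finite type}); what the paper's approach buys is that no separate argument is needed, since the lemma's proof was already set up for arbitrary finitely presented $X$.
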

\begin{proof}
If  $\mathbf G^{\T}$ has a basis of finitely generated ideals, then let $\S:=\{H_a/R:a\in \Ob(\A),\, R\in \mathbf{G}_a\cap \fg(\mod \A)\}$ and notice that $\F={\S}^{\perp}$, so that ${\t}$ is of finite type. Conversely, let  ${\t}$ be of finite type and, for each $a\in\Ob(\A)$, define $\mathbf{B}_a:=\{R\leq H_a:H_a/R\in\T\cap \text{mod} \A)\}=\mathbf{G}_a^\T\cap\text{fg}(\mod \A)$. Let $R\in\mathbf{G}_a^\mathbf{\T}$ be arbitrary. The proof of Lem.\,\ref{prop.hereditary of finite type}, with $X=H_a$ and $T=H_a/R$, gives a finitely generated subobject $R_k\subseteq R$ such that $H_a/R_k\in\T$. This just says that $R_k\in\mathbf{B}_a$, so that $\mathbf{B}:=\{\mathbf{B}_a:a\in\Ob (\A)\}$ is a basis of $\mathbf{G}^\T$ of finitely generated right ideals. 
%
%
\end{proof}

\section{TTF triples, idempotent ideals and recollements}

The section is organized as follows: we start recalling some basic facts and definitions about TTF triples in Sec.\ref{sub_TTF triples and Abelian recollements}, including their relation with Abelian recollements; we then extend Jan's correspondence in Sec.\ref{jans_subs}, showing a bijection between the family of TTF classes in $\mod \A$ and idempotent ideals of $\A$. In Sec.\,\ref{subs_Abelian recollements of module categories} we specialize Jan's Theorem showing that Abelian recollements of $\mod \A$ by categories of modules correspond to those idempotent ideals that are traces of finitely generated projectives. Finally, in Sec.\ref{subs_centrally}, we show that Jan's Theorem induces a correspondence between (idempotent ideals generated by) central idempotents and splitting TTF triples.

\subsection{TTF triples and Abelian recollements}\label{sub_TTF triples and Abelian recollements}

A hereditary torsion class $\T$ is said to be a {\bf TTF class} (torsion and torsionfree class), provided it is closed under taking products. By Lem.\,\ref{construction_of_tor_rad}, both $(\T,\T^\perp=:\F)$ and $(\C:={}^{\perp}\T,\T)$ are torsion pairs, we denote by $t\colon \mod \A\to \T$ and by $c\colon \mod \A\to \C$ the corresponding torsion radicals; the triples of the form $(\C,\T,\F)$ are called {\bf TTF triples}. In this situation, we obtain a diagram as follows:
\[
\xymatrix{
&&&&\C\ar@/_12pt/[lld]|{\text{inclusion}}\\
\T\ar[rr]|{\text{inclusion}}&&\mod \A\ar@/_-12pt/[ll]|t\ar@/_12pt/[ll]|{(1:c)}\ar@/_10pt/[urr]|{c}\ar@/_-10pt/[rrd]|{(1:t)}&&\\
&&&&\F\ar@/_-12pt/[llu]|{\text{inclusion}}\\
}
\]
In fact, there is an alternative way to think about TTF triples, that is, these triples are in bijection with (equivalence classes of) the ``{Abelian recollements}'' of $\mod \A$. 
\begin{definition}
A {\bf recollement $\mathcal{R}$ of $\mod\A$ by Abelian categories} $\X$ and $\Y$ (also called an {\bf Abelian recollement}) is a diagram of additive functors
\[
\xymatrix{
\mathcal{R}:&\Y\ar[rr]|{i_{*}}&& \mod\A\ar@/_-12pt/[ll]|{i^{!}}\ar@/_12pt/[ll]|{i^{*}}\ar[rr]|{j^{*}}&&\X\ar@/_-12pt/[ll]|{j_*}\ar@/_12pt/[ll]|{j_!}
}
\]
satisfying the following assertions:
\begin{enumerate}[\rm ({AR.}1)]
\item $(j_{!},j^{*},j_{*})$ and $(i^{*},i_{*},i^{!})$ are adjoint triples; 
\item the functors $i_{*},\, j_{!}$ and $j_{*}$ are fully faithful;
\item $\Im (i_{*})=\Ker(j^{*}).$
\end{enumerate}
Two Abelian recollements $\mathcal{R}=(\Y,\mod \A,\X)$ and $\mathcal{R}^{'}=(\Y',\mod \A,\X')$ of $\mod\A$ are said to be {\bf equivalent} if, denoting by $j^*\colon \mod\A\to \X$ and $(j^*)'\colon \mod\A\to \X'$ the functors appearing in the two recollements, respectively, there are equivalences $\Phi\colon \mod \A \to \mod\A$ and $\Psi\colon \X \to \X^{'}$ such that the following diagram commutes, up to a natural isomorphism:
\[
\xymatrix@R=15pt@C=50pt{
\mod\A \ar[r]^{j^{*}} \ar[d]_{\Phi} & \X  \ar[d]^{\Psi}\\ 
\mod\A \ar[r]^{({j^{*}})^{'}} & \X^{'}.
}
\]
\end{definition}

Now, let us start with a TTF triple $(\C,\T,\F)$ in $\mod \A$ and let us hint how to construct the associated Abelian recollement. Indeed, one starts considering the Gabriel quotient  $q\colon \mod \A\to (\mod \A)/\T$ that, by \cite[Prop.\,I.1.3]{BR}, is equivalent to the full subcategory $\C\cap \F$ of $\mod \A$. Furthermore, the class $\T$ is both localizing and colocalizing, meaning that the quotient functor $\mod \A\to (\mod \A)/\T$ has both adjoints, thus the recollement induced by our TTF can be visualized by the following diagram
\[
\xymatrix{
\T\ar[rr]|{\text{inclusion}}&&\mod \A\ar@/_-12pt/[ll]|t\ar@/_12pt/[ll]|{(1:c)}\ar[rr]|q&&(\mod \A)/\T\cong \C\cap \F\ar@/_-12pt/[ll]|{q_*}\ar@/_12pt/[ll]|{q_!}
}
\]
%
%
%
%
The following result is a direct consequence of  \cite[Thm.\,4.3 and Coro.\,4.4]{PV}:

\begin{proposition} \label{prop.bijection-recollements-TTF}
Let $\mathcal{A}$ be a small preadditive category. For each Abelian recollement 
\[
\mathcal{R}:=\xymatrix{
\Y\ar[rr]|{i_{*}}&& \mod \A\ar@/_-12pt/[ll]|{i^{!}}\ar@/_12pt/[ll]|{i^{*}}\ar[rr]|{j^{*}}&&\X\ar@/_-12pt/[ll]|{j_*}\ar@/_12pt/[ll]|{j_!}
}
\]
there is an associated TTF triple $\tau_\mathcal{R}=(\Ker(i^*),\Im (i_*),\Ker(i^!))$ in $\mod \mathcal{A}$. The assignement $\mathcal{R}\mapsto \tau_\mathcal{R}$ defines a bijection from the set of equivalence classes of Abelian recollements of $\mod \mathcal{A}$ to the set of TTF triples in $\mod\mathcal{A}$. 
\end{proposition}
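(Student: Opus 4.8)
The plan is to make the two mutually inverse constructions sketched in the discussion before the statement completely explicit, and then reduce the remaining $2$-categorical bookkeeping to \cite[Thm.\,4.3 and Coro.\,4.4]{PV}, which applies here because $\mod\A$ is a Grothendieck category with a family of small projective generators (Lem.\,\ref{mod_is_groth}). So the real work is to verify that $\mathcal{R}\mapsto\tau_\mathcal{R}$ lands in TTF triples, that the recollement built from a TTF triple has $\tau$-triple the one we started with, and that, up to the equivalence relation on recollements, no information is lost.

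First I would check that $\tau_\mathcal{R}$ is a genuine TTF triple. By (AR.1) the functor $j^{*}$ has both a left and a right adjoint, hence it is exact and preserves arbitrary products and coproducts; therefore $\Ker(j^{*})$ is a Serre subcategory of $\mod\A$ which is in addition closed under products and coproducts, so by Lem.\,\ref{construction_of_tor_rad} it is simultaneously a torsion and a torsionfree class, i.e.\ a TTF class. By (AR.3) this TTF class equals $\Im(i_{*})$, so $({}^{\perp}\Im(i_{*}),\,\Im(i_{*}),\,\Im(i_{*})^{\perp})$ is a TTF triple. It remains to identify its outer terms with the kernels in the statement: using the adjunction $i^{*}\dashv i_{*}$ one has $\hom_\A(M,i_{*}N)\cong\hom_\A(i^{*}M,N)$, so $M\in{}^{\perp}\Im(i_{*})$ iff $i^{*}M=0$ (take $N=i^{*}M$); that is, ${}^{\perp}\Im(i_{*})=\Ker(i^{*})$, and dually, via $i_{*}\dashv i^{!}$, $\Im(i_{*})^{\perp}=\Ker(i^{!})$. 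Hence $\tau_\mathcal{R}=(\Ker(i^{*}),\Im(i_{*}),\Ker(i^{!}))$ is a TTF triple.

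Conversely, given a TTF triple $(\C,\T,\F)$ I would build the recollement displayed before the statement: on the left the inclusion $i_{*}\colon\T\hookrightarrow\mod\A$, with left adjoint $i^{*}:=(1:c)$ and right adjoint $i^{!}:=t$ (the coradical of $(\C,\T)$ and the radical of $(\T,\F)$); on the right the Gabriel quotient $j^{*}:=q\colon\mod\A\to(\mod\A)/\T\cong\C\cap\F$, with adjoints $j_{!}:=q_{!}$ and $j_{*}:=q_{*}$, which exist because $\T$ is both localizing and colocalizing. Axioms (AR.1) and (AR.2) hold by the standard properties of Gabriel quotients and of torsion radicals/coradicals, while (AR.3) is the identity $\T=\Ker(q)$. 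Feeding this recollement into the first construction returns $(\Ker((1:c)),\T,\Ker(t))=(\C,\T,\F)$, since $(1:c)M=0\iff M\in\C$ and $tM=0\iff M\in\F$; so this composite is the identity on TTF triples.

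The main obstacle is the other composite: showing that every recollement $\mathcal{R}$ of $\mod\A$ is equivalent, in the sense defined above, to the canonical one attached to $\tau_\mathcal{R}$, so that the two assignments are inverse bijections between TTF triples and equivalence classes of recollements. Here I would argue that the fully faithful $i_{*}$ identifies $\Y$ with the subcategory $\T=\Im(i_{*})$, after which $i^{*},i^{!}$ become the essentially unique adjoints of $\T\hookrightarrow\mod\A$, hence isomorphic to $(1:c)$ and $t$; and since $j^{*}$ is exact, annihilates $\T$, and is a localization at $\T$, the universal property of the Serre quotient $q\colon\mod\A\to(\mod\A)/\T$ yields an equivalence $\Psi\colon\X\to(\mod\A)/\T$ with $\Psi\circ j^{*}\cong q$, which together with $\Phi=\id$ realizes the required equivalence of recollements and in particular shows $\mathcal{R}\mapsto\tau_\mathcal{R}$ is constant on equivalence classes. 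The delicate part is organizing the compatibility isomorphisms coherently; I would carry this out following \cite[Thm.\,4.3 and Coro.\,4.4]{PV}, the rest being formal once the two constructions above are in place.
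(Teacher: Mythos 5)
Your argument is correct and follows essentially the same route as the paper, which proves this proposition simply by citing \cite[Thm.\,4.3 and Coro.\,4.4]{PV} after sketching (in the preceding discussion) the construction of the recollement attached to a TTF triple via the Gabriel quotient $q\colon\mod\A\to(\mod\A)/\T\cong\C\cap\F$. You have merely made the routine verifications explicit while, like the paper, deferring the uniqueness-up-to-equivalence of the recollement to \cite{PV}, so there is nothing to object to.
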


%
%
%

\subsection{Jan's Theorem}\label{jans_subs}

%
Analogously to what happens in categories of modules over associative unital rings, TTF triples in $\mod \A$ are in bijection with idempotent ideals of $\A$ (see Thm.\,\ref{prop.direct proof} below). In the following lemma we show how any TTF triple determines an idempotent ideal of $\A$, while the opposite direction is illustrated in Lem.\,\ref{idempont_implies_TTF}.

\begin{lemma}\label{prop.torsion ideal is two-sided}
Let  $(\C,\T,\F)$ be a TTF triple in $\mod{\A}$, denote by $c\colon \mod{\A}\to\mod{\A}$ the radical associated with the torsion pair $(\C,\T)$, and let $c(\A)$ be the ideal defined in Lem.\,\ref{torsion_ideal_lem}. Then, the following statements hold true:
\begin{enumerate}[\rm (1)]
\item $c(\A)(a,b)=\mathcal{I}_\T(a,b):=\{(r\colon a\to b)\in \A: T(r)=0,\, \forall T\in\T\}$;
\item the ideal $\mathcal{I}_\T$ defined in part (1) is idempotent. 
\end{enumerate}
\end{lemma}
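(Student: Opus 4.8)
The plan is to rewrite both assertions via the Yoneda Lemma and then play the closure properties of $\C$ and $\T$ against one another. Throughout, recall that $c(H_b)$ is a subobject of $H_b$ belonging to $\C={}^{\perp}\T$, with $H_b/c(H_b)\in\T$, and that for $r\in\A(a,b)=H_b(a)$ the associated map $H_r=\A(-,r)\colon H_a\to H_b$ satisfies $(H_r)_a(\id_a)=r$; consequently $r\in c(H_b)(a)$ if and only if $\Im(H_r)\leq c(H_b)$, i.e.\ $H_r$ factors through the inclusion $c(H_b)\hookrightarrow H_b$.

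For part (1), to prove $c(\A)(a,b)\subseteq\mathcal{I}_\T(a,b)$ I would take $r\in c(H_b)(a)$ and $T\in\T$: every $\varphi\colon H_b\to T$ vanishes on $c(H_b)$ (since $c(H_b)\in{}^{\perp}\T$), hence $\varphi\circ H_r=0$; this says $\hom_\A(H_r,T)=0$, which under the Yoneda isomorphism (natural in $r$) is exactly $T(r)=0$. For the reverse inclusion, assume $T(r)=0$ for all $T\in\T$ and apply it to $T_0:=H_b/c(H_b)\in\T$ with projection $q\colon H_b\to T_0$; naturality of $q$ at $r$, using $H_b(r)(\id_b)=r$, gives $q_a(r)=T_0(r)(q_b(\id_b))=0$, so $\Im(H_r)\leq\Ker q=c(H_b)$ and $r\in c(H_b)(a)$.

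For part (2), once (1) is available we know $\mathcal{I}_\T=c(\A)$ is an ideal (Lem.\,\ref{torsion_ideal_lem}) with $\mathcal{I}_\T(-,b)=c(H_b)$. I would first check that any $K$ with $K\mathcal{I}_\T=0$ lies in $\T$: since $\mathcal{I}_\T(-,a)=c(H_a)$, one has $H_a/\mathcal{I}_\T(-,a)=H_a/c(H_a)\in\T$, and by the lemma describing the modules $M$ with $M\I=0$ as $\Gen\{H_a/\I(-,a):a\in\Ob(\A)\}$, such a $K$ is a quotient of a coproduct of objects of $\T$, hence lies in $\T$. Next, for any module $M$ and any ideal $\I$ one has the elementary identity $(M/M\I)\I=0$ (because $(M/N)\I=(M\I+N)/N$). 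Combining, for every $N\in\C$ the quotient $N/N\mathcal{I}_\T$ lies both in $\T$ (by the previous two steps) and in $\C$ (being a quotient of $N$, and $\C$ is closed under quotients), so $N/N\mathcal{I}_\T=0$ because $\C\cap\T=0$; thus $N\mathcal{I}_\T=N$. Applying this to $N:=c(H_b)\in\C$ yields $c(H_b)\mathcal{I}_\T=c(H_b)$, while unwinding the definition of the product of ideals and using $\mathcal{I}_\T(-,b)=c(H_b)$ gives $(\mathcal{I}_\T\cdot\mathcal{I}_\T)(-,b)=c(H_b)\mathcal{I}_\T$. Hence $(\mathcal{I}_\T\cdot\mathcal{I}_\T)(-,b)=\mathcal{I}_\T(-,b)$ for all $b\in\Ob(\A)$, i.e.\ $\mathcal{I}_\T$ is idempotent.

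The main obstacle is that $\C$ need not be closed under subobjects, so one cannot directly conclude that $c(H_b)\mathcal{I}_\T\in\C$; the device that gets around this is to work with the quotient $N/N\mathcal{I}_\T$ and combine closure of $\C$ under quotients with the heredity of $\T$ (closure under subobjects) together with the triviality of $\C\cap\T$. The remaining points are routine bookkeeping: the naturality-in-$r$ form of the Yoneda identification $\hom_\A(H_r,T)\cong T(a)$ used in (1), and the identity $(\mathcal{I}_\T\cdot\mathcal{I}_\T)(-,b)=c(H_b)\mathcal{I}_\T$ in (2).
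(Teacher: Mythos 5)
Your proof is correct. Part (1) is essentially the paper's own argument: the paper phrases it as the Yoneda identification $\I_\T(-,b)=\Rej_\T(H_b)$ (where $\Rej_\T(M)=\bigcap\{\Ker\mu:\mu\colon M\to T,\ T\in\T\}$), which coincides with $c(H_b)$ because $\T$ is closed under products and subobjects; your two inclusions are exactly the two inclusions $c(H_b)\subseteq\Rej_\T(H_b)$ and $\Rej_\T(H_b)\subseteq\Ker(H_b\to H_b/c(H_b))$, made explicit. For part (2) both proofs hinge on the same key fact, namely that $M/M\I_\T\in\T$ for every module $M$: you get it from the lemma identifying $\{M:M\I=0\}$ with $\Gen\{H_a/\I(-,a)\}$ together with the identity $(M/M\I)\I=0$, while the paper factors the components of a free presentation of $M/M\I_\T$ through the modules $H_{a_i}/\I_\T(-,a_i)$ — the same computation in different clothing. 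The two arguments part ways only at the final step: the paper forms the extension $0\to\I_\T(-,a)/\I_\T^2(-,a)\to H_a/\I_\T^2(-,a)\to H_a/\I_\T(-,a)\to 0$, uses extension-closure of $\T$ to place the middle term in $\T$, and concludes $\I_\T(-,a)=\Rej_\T(H_a)\subseteq\I_\T^2(-,a)$; you instead apply the key fact to $N=c(H_b)\in\C$ and invoke $\C\cap\T=0$ together with quotient-closure of $\C$ to get $N=N\I_\T$. Your variant is slightly more economical (no extension argument, no second appeal to the reject) and isolates the useful general statement that $N\I_\T=N$ for all $N\in\C$, which the paper only records implicitly later via $\C=\Gen\{\I_\T(-,a):a\in\Ob(\A)\}$ after Thm.\,\ref{prop.direct proof}. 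Both routes are sound.
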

\begin{proof}
It is clear that $\I_\T$ is a two-sided ideal. For a module $M\in\mod \A$, let
\[
\Rej_\T(M):=\bigcap\{\Ker(\mu):{T\in\T,\,\mu\in\hom_\A(M,T)}\};
\]
we claim that $\mathcal{I}_\T(-,a)=\Rej_\T(H_a)$, for all $a\in\Ob(\A)$. Indeed, by the Yoneda Lemma, $\Hom_\A(H_a,T)\cong T(a)$ and, identifying each $\mu\colon H_a\to T$ with the corresponding element of $T(a)$, we readily see that $\Rej_\T(H_a)(b)=\{\alpha\in\A(b,a):T(\alpha)(\mu)=0,\, \forall T\in\T,\, \mu\in T(a)\}$. That is,  
\[
\Rej_\T(H_a)(b)=\{\alpha\in\A(b,a):T(\alpha)=0,\, \forall T\in\T\}=\mathcal{I}_\T(b,a). 
\]
Being $\T$ closed under products and submodules, $M/\Rej_\T(M)\in \T$, for all $M\in\mod{\A}$. In particular,  $H_a/\I_\T(-,a)\in\T$, for all $a\in\Ob(\A)$. Let now $M\in\mod \A$, consider an epimorphism $\coprod_{i\in I}H_{a_i}\to M$, and take the composition with the natural projection $M\to M/M \I_\T$: 
\[
\varphi\colon \coprod_{i\in I}H_{a_i}\to M\to M/M \I_\T.
\]
Denote by $\pi\colon \A\to \A/\I_\T$ the projection, then $M/M\I_\T\cong \pi_*\pi^*(M)$ and 
\begin{align*}
\hom_\A(H_{a_i},\pi_*\pi^*(M))&\cong \hom_{\A/\I_\T}(\pi^*H_{a_i},\pi^*(M))\\
&\cong \hom_{\A/\I_\T}(H_{a_i}/\I_\T(-,a_i),M/M\I_\T),
\end{align*}
see \eqref{extending_representables_eq} and the discussion there. Thus, each component $\varphi_i:H_{a_i}\to M/M\I_\T$ factors through $H_{a_i}/\I_\T(-,a_i)$, so $M/M \I_\T$ is a quotient of a module of the form $\coprod_{i\in I}(H_{a_i}/\I_\T(-,a_i))\in \T$, hence, $M/M \I_\T\in \T$. In particular, $\mathcal{I}_\T(-,a)/\mathcal{I}^2(-,a)\in\T$ for all $a\in \Ob(\A)$. Considering now the following short exact sequence  in $\mod{\A}$:
\[
\xymatrix@C=15pt{
0\ar[r]&{\mathcal{I}_\T(-,a)}/{\mathcal{I}_\T^2(-,a)}\ar[rr]&&H_a/{\mathcal{I}_\T^2(-,a)}\ar[rr]&&H_a/{\mathcal{I}_\T(-,a)}\ar[r]& 0
}
\]
 whose outer terms are in $\T$. We then get that $H_a/\mathcal{I}_\T^2(-,a)\in \T$. Hence, $\Rej_\T(H_a)=\mathcal{I}_\T(-,a)\subseteq \Ker(H_a\to H_a/\mathcal{I}_\T^2(-,a))=\mathcal{I}_\T^2(-,a)$, proving that $\mathcal{I}_\T$ is idempotent. 
\end{proof}

\begin{lemma}\label{idempont_implies_TTF}
Given an idempotent ideal $\mathcal{I}$ of $\A$, the following is a TTF class in $\mod{\A}$:
\[
\T_\I:=\{T\in \mod \A:T(\alpha)=0,\,\forall \alpha\in \I(a,b),\, a,b\in\A\}.
\]
\end{lemma}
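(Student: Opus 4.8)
The goal is to show that $\T_\I$ is a TTF class, i.e.\ (by Lem.\,\ref{construction_of_tor_rad} and the definition of TTF class in Sec.\,\ref{sub_TTF triples and Abelian recollements}) that $\T_\I$ is closed under quotients, extensions, coproducts, subobjects and products. The plan is to first establish the closure properties that are essentially formal, and then to identify $\T_\I$ with a class of the shape $\Gen\{H_a/\I(-,a):a\in\Ob(\A)\}$ (or with $\{M:M\I=0\}$, using the lemma in Sec.\,\ref{Ideals and quotient categories}) in order to cheaply obtain the remaining ones.

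\textbf{First steps (the formal part).} A module $T$ lies in $\T_\I$ iff $T(\alpha)=0$ for every $\alpha\in\I(a,b)$, i.e.\ iff the map $T(\alpha)\colon T(b)\to T(a)$ is the zero morphism of Abelian groups for each such $\alpha$. Since co/kernels, coproducts, products and extensions in $\mod\A$ are all computed componentwise in $\Ab$ (Lem.\,\ref{mod_is_groth} and the discussion after it), each of the five closure conditions becomes a completely routine statement about Abelian groups: if $0\to T'\to T\to T''\to 0$ is exact with $T(\alpha)$, and hence also its restriction $T'(\alpha)$ and its induced quotient map $T''(\alpha)$, vanishing, then $T',T''\in\T_\I$; conversely, if $T'(\alpha)=0=T''(\alpha)$ then, chasing the commutative square relating the two exact rows at the objects $a$ and $b$, one gets $T(\alpha)=0$, giving closure under extensions; closure under arbitrary products and coproducts is immediate because $\bigl(\prod_i T_i\bigr)(\alpha)=\prod_i T_i(\alpha)$ and similarly for coproducts. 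So at this stage $\T_\I$ is already closed under subobjects, quotients, extensions, coproducts and products; in particular, if one also shows it is a torsion class, then it is automatically a TTF class, and the only genuinely non-formal point is to exhibit a torsion pair with torsion class $\T_\I$ — equivalently, by Lem.\,\ref{construction_of_tor_rad}, the above list already forces $\T_\I$ to be a torsion class once closure under quotients, extensions and coproducts is known, which we have. Thus, strictly speaking, the five componentwise verifications already finish the proof; what remains is to present it cleanly.

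\textbf{The cleaner route via the quotient category.} Rather than doing five separate diagram chases, I would instead observe that $T(\alpha)=0$ for all $\alpha\in\I(a,b)$ is exactly the condition $T\I=0$ in the notation of Sec.\,\ref{Ideals and quotient categories}: indeed $T\I(a)=\sum_{b,\alpha\in\I(a,b)}\Im(T(\alpha))$, which vanishes precisely when every $T(\alpha)=0$. Hence $\T_\I=\{T\in\mod\A:T\I=0\}$, and by the lemma of Sec.\,\ref{Ideals and quotient categories} this coincides with $\Gen\{H_a/\I(-,a):a\in\Ob(\A)\}$ and is equivalent, as a full subcategory, to $\mod{\A/\I}$. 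A $\Gen$-class of this form is visibly closed under quotients and coproducts, it is closed under extensions and products because it equals the kernel of the exact, product-preserving functor $M\mapsto M\I$ (note $M\mapsto M\I$ is a subfunctor of the identity by Lem.\,\ref{lemma_traces_are_functorial}-type reasoning and its formation commutes with products since everything is componentwise), and it is closed under subobjects since a submodule $N\leq T$ with $T\I=0$ has $N\I\leq T\I=0$. Therefore $\T_\I$ is a hereditary torsion class closed under products, i.e.\ a TTF class.

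\textbf{Expected main obstacle.} There is no deep obstacle here: the content is entirely the componentwise nature of (co)limits in $\mod\A$ together with the identification $\T_\I=\{T:T\I=0\}$. The only point requiring a little care is closure under \emph{extensions}: one must check that in a short exact sequence $0\to T'\to T\to T''\to 0$ with outer terms in $\T_\I$ the middle term $T$ satisfies $T(\alpha)=0$ for $\alpha\in\I(a,b)$ — this uses that $\alpha$ factors through $\I\cdot\I=\I$ (idempotency), so that $T(\alpha)$ is a composite of two maps each of which is ``seen'' to vanish after passing to $T'$ and $T''$; alternatively and more simply, one just chases the $4$-term ladder at $a$ and $b$. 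I would present the $\{T:T\I=0\}$ description first, deduce all five closure properties from it in one short paragraph, and remark that idempotency of $\I$ is what guarantees the reverse passage (from $\mod{\A/\I}$ back into $\mod\A$) behaves well, matching the symmetric role of idempotency in Lem.\,\ref{prop.torsion ideal is two-sided}.
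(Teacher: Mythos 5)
Your write-up does contain the correct key idea, but only as an aside, and the argument you actually propose to present would fail at the one non-formal point of the lemma, namely closure under extensions. If $0\to T'\xrightarrow{u} T\xrightarrow{p} T''\to 0$ is exact and $T'(\alpha)=0=T''(\alpha)$ for some $\alpha\in\I(a,b)$, the ``four-term ladder chase at $a$ and $b$'' does \emph{not} give $T(\alpha)=0$: it only shows that $T(\alpha)\colon T(b)\to T(a)$ factors as $T(b)\twoheadrightarrow T''(b)\to T'(a)\hookrightarrow T(a)$ through some connecting map, which need not vanish. Concretely, take $\A=\Z$, $\I=2\Z$ (not idempotent) and $0\to\Z/2\to\Z/4\to\Z/2\to 0$: multiplication by $2$ kills both outer terms but not $\Z/4$. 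The same example shows that $M\mapsto M\I$ is \emph{not} an exact functor (it sends that sequence to $0\to 0\to 2\Z/4\Z\to 0\to 0$), so the justification in your ``cleaner route'' (``kernel of the exact \dots functor $M\mapsto M\I$'') is also wrong. Closure under subobjects, quotients, products and coproducts is indeed pointwise-formal and never uses idempotency; the extension step is exactly where idempotency must enter, and it is the entire content of the lemma, not a presentational detail.

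The repair is the argument you mention parenthetically, which is precisely the paper's proof: write $\alpha=\sum_j\gamma_j\circ\beta_j$ with $\beta_j\in\I(a,c_j)$ and $\gamma_j\in\I(c_j,b)$ (possible because $\I=\I\cdot\I$), reduce to $\alpha=\gamma\circ\beta$, and chase once for each factor. Since $T''(\gamma)=0$, the map $T(\gamma)\colon T(b)\to T(c)$ lands in $\Ker(p_c)=\Im(u_c)$, so $T(\gamma)=u_c\circ\psi$; since $T'(\beta)=0$, the map $T(\beta)\colon T(c)\to T(a)$ kills $\Im(u_c)$, so $T(\beta)=\varphi\circ p_c$. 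Hence $T(\alpha)=T(\beta)\circ T(\gamma)=\varphi\circ(p_c\circ u_c)\circ\psi=0$. You should promote this to the body of the proof and delete both claims that a plain ladder chase, or exactness of $M\mapsto M\I$, suffices.
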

\begin{proof}
Since products and coproducts in $\mod{\A}$ are computed ``pointwise'', it is clear that $\T_\I$ is closed under taking products, coproducts, submodules and quotients. It only remains to check that it is closed under extensions. Indeed, consider an exact sequence $0\to T'{\to}T{\to}T''\to 0$ in $\mod{\A}$, where $T',T''\in\T_\I$, and take a morphism $\alpha\in\mathcal{I}(a,b)$. Being $\mathcal{I}$ idempotent, we can write $\alpha =\sum_{j=1}^n\gamma_j\circ\beta_j$, with $\beta_j\in\mathcal{I}(a,c_j)$ and $\gamma_j\in\mathcal{I}(c_j,b)$, with $n\in\N$, $j\in\{1,\dots,n\}$ and $c_j\in \Ob(\A)$. Our goal is to prove that $T(\alpha)=0$, for which it is enough to prove that $T(\gamma_j)\circ T(\beta_j)=0$ for all $j=1,\dots,n$. Therefore, it is not restrictive to assume that $\alpha=\gamma\circ\beta$, for some morphisms $\beta\in\mathcal{I}(a,c)$ and $\gamma\in\mathcal{I}(c,b)$. Due to the definition of $\T_\I$, we have the following commutative diagram with exact rows in $\Ab$:
\[
\xymatrix{ 
0 \ar[r] & T^{'}(a) \ar[r]^{u_a} \ar[d]^{0} & T(a) \ar[r]^{p_a} \ar[d]^{T(\beta)} & T^{''}(a) \ar[r] \ar[d]^{0} & 0\\ 
0 \ar[r] & T^{'}(c) \ar[r]_{u_c} & T(c) \ar[r]_{p_c} & T^{''}(c) \ar[r] & 0
}
\]
By  the universal property of co/kernels, we get a morphism  $\varphi \colon T''(a)\to T'(c)$ in $\Ab$ such that $T(\beta)=u_c\circ\varphi\circ p_a$. By a similar argument, we get a morphism $\psi \colon T''(c)\to T'(b)$ such that $T(\gamma)=u_b\circ\psi\circ p_c$. Hence, $T(\alpha )=T(\gamma)\circ T(\alpha)=u_b\circ\psi\circ p_c\circ u_c\circ\varphi\circ p_a$, which is the zero morphism since $p_c\circ u_c=0$. 
\end{proof}

\begin{theorem} \label{prop.direct proof}
Let $\A$ be a small preadditive category. Then there is a one-to-one correspondence
\[
\xymatrix@R=0pt{
\Phi:\S_1:=\{\text{Idempotent ideals of $\A$}\}\ar@{<->}[rr]^(.55){1:1}&&\{\text{TTFs in $\mod{\A}$}\}=:\S_2:\Psi\\
\I\ar@{|->}[rr]&&\Phi(\I):=\T_\I\\
\Psi(\T):=\mathcal{I}_\T\ar@{<-|}[rr]&&\T
}
\]
where $\I_\T$ and $\T_\I$ are defined as in Lem.\,\ref{prop.torsion ideal is two-sided} and \ref{idempont_implies_TTF}.
\end{theorem}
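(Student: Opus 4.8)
The plan is to check that the two assignments $\Phi\colon\I\mapsto\T_\I$ and $\Psi\colon\T\mapsto\I_\T$, which are well defined by Lemmas \ref{idempont_implies_TTF} and \ref{prop.torsion ideal is two-sided} respectively, are mutually inverse. Since a TTF triple is determined by its middle class $\T$ (the outer classes being ${}^{\perp}\T$ and $\T^{\perp}$), this amounts to the two identities $\I_{\T_\I}=\I$ for every idempotent ideal $\I$ of $\A$, and $\T_{\I_\T}=\T$ for every TTF class $\T$ in $\mod\A$.

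For $\I_{\T_\I}=\I$, the inclusion $\I\subseteq\I_{\T_\I}$ is immediate from the definitions, since any $\alpha\in\I(a,b)$ is annihilated by every $T\in\T_\I$. For the reverse inclusion I would use the module $H_b/\I(-,b)$, which is annihilated by $\I$ (a direct check, using only that $\I$ is an ideal), hence lies in $\T_\I$. Thus, if $r\in\I_{\T_\I}(a,b)$, then the homomorphism $(H_b/\I(-,b))(r)\colon\A(b,b)/\I(b,b)\to\A(a,b)/\I(a,b)$ vanishes; evaluating it at the class of $\id_b$ shows that the class of $r$ in $\A(a,b)/\I(a,b)$ is zero, i.e. $r\in\I(a,b)$. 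Note that idempotency of $\I$ plays no role here.

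For $\T_{\I_\T}=\T$, the inclusion $\T\subseteq\T_{\I_\T}$ is again immediate, as every $T\in\T$ is annihilated by $\I_\T$ by the very definition of $\I_\T$. For the converse I would first unwind that $\T_{\I_\T}$ is exactly the class of modules $M$ with $M\I_\T=0$: indeed $M\I_\T(a)=\sum_{b,\,\alpha\in\I_\T(a,b)}\Im(M(\alpha))$, so $M\I_\T=0$ if and only if $M(\alpha)=0$ for all $\alpha\in\I_\T$. Now the proof of Lemma \ref{prop.torsion ideal is two-sided} already establishes that $M/M\I_\T\in\T$ for every $\A$-module $M$ — this is precisely the step where the idempotency of $\I_\T$ and the membership $H_a/\I_\T(-,a)\in\T$ are used. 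Consequently, if $M\I_\T=0$ then $M=M/M\I_\T\in\T$, giving $\T_{\I_\T}\subseteq\T$ and hence equality.

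The main conceptual obstacle has, in effect, already been absorbed into the two preceding lemmas: the delicate facts are that $\I_\T$ is idempotent (proved via the rejection submodule $\Rej_\T$ and a snake-lemma argument) and that $\T_\I$ is closed under extensions (proved via the factorisation $\alpha=\gamma\circ\beta$ through $\I\cdot\I$). Granting these, the theorem reduces to the two short pairs of containments above, and the only care required is bookkeeping the contravariant variance of right $\A$-modules when evaluating $T(r)$ and when identifying $\T_\I$ with $\{M\in\mod\A:M\I=0\}$.
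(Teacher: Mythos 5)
Your proposal is correct and follows essentially the same route as the paper: both directions reduce to the containments you list, the equality $\I_{\T_\I}=\I$ is obtained by testing against the module $H_b/\I(-,b)$ and evaluating at the class of $\id_b$, and the containment $\T_{\I_\T}\subseteq\T$ is extracted from the fact, established in the proof of Lemma \ref{prop.torsion ideal is two-sided}, that $M/M\I_\T\in\T$ for every $M$ (the paper phrases this as writing $M$ as a quotient of $\coprod_i H_{a_i}/\I_\T(-,a_i)$, which is the same fact). Your observation that idempotency of $\I$ is not needed for $\I_{\T_\I}=\I$ is also consistent with the paper's argument.
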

\begin{proof}
Given a TTF class $\T$ in $\mod \A$ it is not hard to check that $\T\subseteq (\Phi\circ\Psi)(\T)$,  so let us verify the converse inclusion. Indeed, if $M\in (\Phi\circ\Psi)(\T)$, then we have $M(\alpha)=0$, for any morphism $\alpha\in\mathcal{I}_\T(a,b)$, $a,\,b\in\Ob(\A)$; equivalently, any morphism $\A(-,b)\to M$ vanishes on $\mathcal{I}_\T(-,b)$. This means that $M$ can be written as a quotient of a module of the form $\coprod_{i\in I}(H_{a_i}/\I_\T(-,a_i))$ but such a module belongs to $\T$ (see the proof of Lem.\,\ref{prop.torsion ideal is two-sided}), so that $M\in \T$.

On the other hand, given an idempotent ideal $\I$ of $\A$, one checks easily that $\mathcal{I}\subseteq (\Psi\circ\Phi)(\mathcal{I})$. Now, let $\alpha\colon a\to b$ be a morphism in $\alpha\in (\Psi\circ\Phi )(\mathcal{I})(a,b)$, then $T(\alpha)=0$ for all $T\in\T_\I=\Psi (\mathcal{I})$. In particular, for $T=H_b/\mathcal{I}(-,b)$, the equality $T(\alpha)=0$ means that the induced map 
\[
{\A(b,b)}/{\mathcal{I}(b,b)}\to{\A(a,b)}/{\mathcal{I}(a,b)}\qquad\text{such that}\qquad\bar{\beta}\mapsto\overline{\beta\circ\alpha},
\] 
is trivial. Therefore, $\bar{\alpha}=\overline{\id_b\circ\alpha}=\bar{0}$ or, equivalently, $\alpha\in\mathcal{I}(a,b)$. This proves that $(\Psi\circ\Phi)(\mathcal{I})(a,b)\subseteq\mathcal{I}(a,b)$, so that $(\Psi\circ\Phi) (\mathcal{I})=\mathcal{I}$. 
\end{proof}

As a byproduct of the above proofs we obtain the following: given a TTF triple $(\C,\T,\F)$, there is a uniquely associated ideal $\I_\T$ such that 
\[
\T=\Gen\{H_a/\I_\T(-,a):a\in \Ob(\A)\}\cong \mod {\A/\I_\T}\quad\text{ and }
\]
\[
\C=\Gen\{\I_\T(-,a):a\in \Ob(\A)\}.
\]

\subsection{Abelian recollements of module categories}\label{subs_Abelian recollements of module categories}

Let us start with the following definition:

\begin{definition} \label{def.TTF triple generated}
Let $\tau=(\mathcal{C},\mathcal{T},\mathcal{F})$ be a TTF triple in $\mod \A$ and let $\mathcal{S}$ be a class of right $\mathcal{A}$-modules. We say that $\tau$ is {\bf generated by $\mathcal{S}$} when the torsion pair $(\mathcal{C},\mathcal{T})$ is generated by $\mathcal{S}$, i.e., when $\mathcal{T}=\mathcal{S}^\perp$. Furthermore, we say that $\tau$ is {\bf generated by finitely generated projective $\mathcal{A}$-modules} when it is generated by a set of finitely generated projective objects of $\mod \A$. 
\end{definition}

The key result of this section is the following proposition:

\begin{proposition} \label{prop.TTF triples generated by fg projectives}
Let $\mathcal{A}$ be a small preadditive category, let $\tau =(\mathcal{C},\mathcal{T},\mathcal{F})$ be a TTF triple in $\mod \A$ and let $\mathcal{I}$ be the associated idempotent ideal of $\mathcal{A}$. The following assertions are equivalent:
\begin{enumerate}[\rm (1)]
\item $\mathcal{C}\cap\mathcal{F}$ is equivalent to the module category over a small preadditive category;
\item $\tau$ is generated by finitely generated projective $\mathcal{A}$-modules;
\item $\mathcal{I}$ is the trace in $\mathcal{A}$ of a set of finitely generated projective right $\mathcal{A}$-modules (see Lem.\,\ref{prop_char_ideals_trace_fgp} for other equivalent characterizations of these ideals). 
\end{enumerate}
\end{proposition}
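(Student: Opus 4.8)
The plan is to prove the cycle of implications $(2)\Rightarrow(3)\Rightarrow(1)\Rightarrow(2)$, relying heavily on the dictionary already established: Jan's bijection of Thm.~\ref{prop.direct proof} (which gives $\I=\I_\T$ with $\T=\Gen\{H_a/\I(-,a):a\in\Ob(\A)\}$ and $\C=\Gen\{\I(-,a):a\in\Ob(\A)\}$), together with the description of traces of projectives in Lem.~\ref{lem.trace-of-projectives is idempotent}. First I would establish $(2)\Rightarrow(3)$: if $\T=\S^\perp$ for a set $\S$ of finitely generated projective modules, then $(\C,\T)$ is the torsion pair generated by $\S$, so $\C={}^\perp\T=\Gen(\S)$ (here using that the $\S$-members are projective, so $^\perp\T$ is already closed under quotients and generated by $\S$). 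On the other hand $\C=\Gen\{\I(-,a):a\in\Ob(\A)\}$, and one checks that for a class of projectives $\Gen(\S)=\Gen(\S')$ forces $\tr_\S(\A)=\tr_{\S'}(\A)$; comparing with the fact that $\tr_{\{\I(-,a)\}}(\A)=\I$ (since $\I$ is idempotent, $\I\cdot\I=\I$ gives $\I(-,a)$ generated in degree $\leq 1$ by the $\I(-,b)$'s) yields $\I=\tr_\S(\A)$, which is condition (3) by Lem.~\ref{prop_char_ideals_trace_fgp}.

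For $(3)\Rightarrow(1)$: suppose $\I=\tr_\P(\A)$ for a set $\P$ of finitely generated projectives; by Lem.~\ref{lem.trace-of-projectives is idempotent}(1) we may take a single projective generator $T$ of the trace, or better, by Coro.~\ref{coro_ideals_vs_cauchy_with_generator} we may assume $\P=\X$ is a full subcategory of $\proj(\A)$ closed under coproducts and summands. The Gabriel quotient $\mod\A\to(\mod\A)/\T$ is equivalent to $\C\cap\F\cong\mod{\A/\I}$ by the byproduct recorded after Thm.~\ref{prop.direct proof}; but I would instead exhibit $\C\cap\F$ directly as a module category by producing a projective generator. The natural candidate is the endomorphism category: take a (small) set $\X_0$ of objects of $\X$ with $\add(\X_0)=\X$, form the full preadditive subcategory $\B$ of $\mod\A$ with these objects, and show $\B$ is (Morita equivalent to) the preadditive category whose module category is $\C\cap\F$. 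Concretely, $c(P)$ for $P\in\X$ should furnish a family of finitely generated projective generators of $\C\cap\F$ — one verifies $c$ restricted to $\proj$-objects in $\C$ lands in $\C\cap\F$, is projective there, and generates, using that $\hom_\A(P,-)$ is exact and $\hom_{\C\cap\F}(c(P),M)\cong\hom_\A(P,M)$ for $M\in\C\cap\F$. Then Lem.~\ref{mod_is_groth} / Rem.~\ref{rem_morita_cauchy} identifies $\C\cap\F$ with $\mod\B$ for the appropriate $\B$.

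Finally $(1)\Rightarrow(2)$: if $\C\cap\F\cong\mod\B$, then $\C\cap\F$ has a set of finitely generated projective generators $\{Q_i\}$; pulling these back along the left adjoint $q_!$ of the quotient $q\colon\mod\A\to(\mod\A)/\T\cong\C\cap\F$ (which exists because $\T$ is localizing \emph{and} colocalizing, as noted before Prop.~\ref{prop.bijection-recollements-TTF}), I would set $P_i:=q_!(Q_i)$ and argue these are finitely generated projective objects of $\mod\A$ lying in $\C$ with $\T={}\{P_i\}^\perp$: indeed $q_!$ is fully faithful with essential image $\C$, preserves projectivity because $q$ is exact, and preserves finite generation because $q$ preserves coproducts; the orthogonality $\hom_\A(P_i,T)\cong\hom_{\C\cap\F}(Q_i,q(T))=0$ for $T\in\T$ identifies $\T$ with $\{P_i\}^\perp$ since the $Q_i$ generate. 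The main obstacle I anticipate is the careful bookkeeping in $(1)\Rightarrow(2)$ and $(3)\Rightarrow(1)$ around the interaction of $q_!$, the torsion radical $c$, and finite generation: one must be sure that $q_!$ of a finitely generated projective is again finitely generated (not merely a filtered colimit of such), for which the key point is that $q_!$ is the composition of the fully faithful inclusion $\C\hookrightarrow\mod\A$ with the equivalence $(\mod\A)/\T\simeq\C\cap\F$ followed by a section-type construction, and that an object of $\C$ is finitely generated in $\mod\A$ iff its image is finitely generated in $\C\cap\F$ — this last equivalence is where the hypothesis that $\I$ is a trace of \emph{finitely generated} projectives (rather than arbitrary projectives) is really used.
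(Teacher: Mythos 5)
Your proposal is correct and follows essentially the same route as the paper: the paper proves (1)$\Leftrightarrow$(2) by transporting finitely generated projective generators back and forth along $j_!$ and $j^*\cong(1:t)$, and (2)$\Leftrightarrow$(3) by identifying the radical $c$ with $\tr_\mathcal{P}$ on representables; your cycle $(2)\Rightarrow(3)\Rightarrow(1)\Rightarrow(2)$ reassembles the same ingredients (in particular your $(3)\Rightarrow(1)$ must first establish $\C=\Gen(\P)$ and $\T=\P^{\perp}$, i.e.\ it passes through $(3)\Rightarrow(2)$ anyway). Two small corrections to the write-up: the projective generators of $\C\cap\F$ are the reflections $(1:t)(P)=P/t(P)$, not $c(P)$ --- $c$ is the radical of $(\C,\T)$, so $c(P)=P$ for $P\in\C$ and need not lie in $\F$ --- though your formula $\hom_{\C\cap\F}(-,M)\cong\hom_\A(P,M)$ for $M\in\C\cap\F$ shows you intend the correct object. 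Also, before invoking Rem.\,\ref{rem_morita_cauchy} to conclude that $\C\cap\F$ is a module category you should record that $\C\cap\F$ is (Ab.3) (the paper quotes \cite[Prop.\,3.5]{PaV} for (Ab.5)) and that coproducts, epimorphisms and monomorphisms in $\C\cap\F$ agree with those of $\mod \A$, which is precisely what makes the reflected generators small and projective there.
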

\begin{proof}
Let us fix the following notation for the recollement induced by $\tau$:
\[
\mathcal{R}:\qquad
\xymatrix{
\mod{({\A}/{\I})}\cong\Y\ar[rr]|{i_{*}}&&\mod \A\ar@/_-12pt/[ll]|{i^{!}}\ar@/_12pt/[ll]|{i^{*}}\ar[rr]|{j^{*}}&&\X\cong \C\cap \F\ar@/_-12pt/[ll]|{j_*}\ar@/_12pt/[ll]|{j_!}
}
\]
%

\noindent
(1)$\Rightarrow$(2). By Rem.\,\ref{rem_morita_cauchy}, we can
fix a set $\mathcal{P}$ of finitely generated projective generators of $\mathcal{C}\cap\mathcal{F}$. Being $j_!\colon\mathcal{C}\cap\mathcal{F}\to\mod\mathcal{A}$ the left adjoint of a functor which preserves all limits and colimits, it preserves finitely generated projective objects, so that $j_!(\P)\subseteq \proj(\A)$; let us show that $\T=j_!(\P)^{\perp}$. Indeed, $T\in j_!(\mathcal{P})^\perp$ if and only if 
$0=\hom_\A(j_!P,T)\cong 
(\mathcal{C}\cap\mathcal{F})(P,j^*T)$, for all $P\in\P$. But this is equivalent to say that 
$j^*T=0$ since $\mathcal{P}$ generates $\mathcal{C}\cap\mathcal{F}$. 
Therefore, $T\in j_!(\mathcal{P})^\perp$ if and only if 
$T\in\Ker(j^*)=\mathcal{T}$.
%
%
%
%
%
%

\smallskip\noindent
(2)$\Rightarrow$(1). Let $Y\in \mathcal{C}\cap\mathcal{F}$, and fix a set $\mathcal{Q}$ of finitely generated projective $\mathcal{A}$-modules that generates $\tau$. We then have that $\mathcal{C}=\Gen(\mathcal{Q})$, so there is an epimorphism $p\colon\coprod_{\Lambda}Q_\lambda\to j_!(Y)$, for some family $(Q_\lambda)_{\Lambda}\subseteq\mathcal{Q}$. We obtain the following epimorphism in $\mathcal{C}\cap\mathcal{F}$:
\[
\xymatrix{
\coprod_{\Lambda}j^*(Q_\lambda)\cong j^*(\coprod_{\Lambda}Q_\lambda)\stackrel{j^*(p)}{\longrightarrow} (j^*\circ j_!)(Y)\cong Y.
}
\] 
We have then reduced our task to prove that  $j^*(\mathcal{Q})$ consists of small projective objects of $\mathcal{C}\cap\mathcal{F}$. 
For this, note that, although kernels and cokernels in $\mathcal{C}\cap\mathcal{F}$ are not computed as in $\mod\mathcal{A}$,  epimorphisms and monomorphisms in  $\mathcal{C}\cap\mathcal{F}$ are precisely the morphisms which are epimorphisms and monomorphisms, respectively, in $\mod\mathcal{A}$. Hence, $j^*(Q)\cong (1:t)(Q)$ is projective in $\mathcal{C}\cap\mathcal{F}$, for each $Q\in\Q$. Moreover, since coproducts in $\mathcal{C}\cap\mathcal{F}$ are computed as in $\mod\mathcal{A}$, it is clear that $(1:t)(Q)$ is small in $\mathcal{C}\cap\mathcal{F}$. Since $\mathcal{C}\cap\mathcal{F}$ is an (Ab.$5$) (see \cite[Prop.\,3.5]{PaV}), so in particular (Ab.$3$), Abelian category, we conclude that it is equivalent to a category of modules. 

\smallskip\noindent
(2)$\Rightarrow$(3). Let $\mathcal{P}$ be a set in $\mathcal{C}\cap\proj(\mathcal{A})$ which generates $\tau$, and let $\mathcal{I}':=\tr_{\P}(\A)$. It is clear that $c(M)=\tr_\mathcal{P}(M)$, for all $M\in\mod\mathcal{A}$. Then, by definition of $\mathcal{I}'$, we have that $\mathcal{I}'(-,b)=\tr_\mathcal{P}(H_b)=c(H_b)$. By Lem.\,\ref{prop.torsion ideal is two-sided}, $c(H_b)=\mathcal{I}(-,b)$. It follows that $\mathcal{I}'=\mathcal{I}$. 

\smallskip\noindent
(3)$\Rightarrow$(2). Let $\mathcal{P}$ be a set of finitely generated projective $\mathcal{A}$-modules such that $\mathcal{I}=\tr_\P(\A)$. We know that $\mathcal{C}=\Gen\{\mathcal{I}(-,a):a\in\text{Ob}(\mathcal{A})\}$ (see the comment after Thm.\,\ref{prop.direct proof}) and each $\mathcal{I}(-,a)=\tr_\mathcal{P}(H_a)$ is epimorphic image of a coproduct of objects of $\mathcal{P}$. We then get that $\mathcal{C}\subseteq\Gen(\mathcal{P})$, so that $\Gen(\mathcal{P})^{\perp}=\mathcal{P}^\perp \subseteq \mathcal{C}^\perp=\T$. 
\\
For the converse inclusion, let $b\in \Ob(\A)$ and $P\in \P$. By the projectivity of $P$, any morphism $\phi\colon P\to H_{b}/\tr_{\P}(H_b)$ lifts to a morphism $P\to H_b$, but any such morphism factors through $\tr_\P(H_b)$, so $\phi=0$. We have just verified that $H_{b}/\tr_{\P}(H_b)=H_{b}/\mathcal{I}(-,b)\in \P^{\perp}$ for all $b\in\Ob(\A)$. Now, given $T$ in $\T$, we know (see once again the comment after Thm.\,\ref{prop.direct proof}) that there is an epimorphism $p\colon \coprod_{\Lambda}H_{b_\lambda}/\mathcal{I}(-,b_\lambda)\to T$. 
Using the projectivity of the objects in $\mathcal{P}$, we get that $T\in \mathcal{P}^{\perp}$, and so $\T\subseteq \mathcal{P}^{\perp}$. Therefore, $\mathcal{T}=\mathcal{P}^\perp$, and hence $\tau$ is generated by $\mathcal{P}$.
\end{proof}

We immediately derive the main result of the section.

\begin{theorem}\label{main_thm_recollement}
Let $\mathcal{A}$ be a small preadditive category. There are one-to-one correspondences between:
\begin{enumerate}[\rm (1)]
\item the equivalence classes of recollements of $\mod \A$ by module categories over small preadditive categories;
\item the TTF triples in $\mod \A$ generated by finitely generated projective $\mathcal{A}$-modules;
\item the idempotent ideals of $\mathcal{A}$ which are the trace of a set of finitely generated projective $\mathcal{A}$-modules;
\item the idempotent ideals of the  additive closure $\widehat{\mathcal{A}}$ of $\mathcal{A}$ generated by a set of idempotent endomorphisms;
\item the full subcategories of the Cauchy completion $\widehat{\mathcal{A}}_\oplus$ which are closed under coproducts and summands.
\end{enumerate}
\end{theorem}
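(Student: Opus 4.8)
The plan is to assemble the statement from results already in hand: essentially all the mathematical content is contained in Propositions~\ref{prop.bijection-recollements-TTF}, \ref{prop.TTF triples generated by fg projectives} and \ref{prop_char_ideals_trace_fgp}, together with Theorem~\ref{prop.direct proof} and Corollary~\ref{coro_ideals_vs_cauchy_with_generator}; what remains is to check that the relevant subsets are matched under the bijections those results provide. Concretely, I would establish the chain $(1)\Leftrightarrow(2)\Leftrightarrow(3)\Leftrightarrow(5)$ together with $(4)\Leftrightarrow(5)$.

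For $(1)\Leftrightarrow(2)$, I start from the bijection of Proposition~\ref{prop.bijection-recollements-TTF} between equivalence classes of Abelian recollements of $\mod\A$ and TTF triples in $\mod\A$, which sends a recollement $\mathcal R=(\Y,\mod\A,\X)$ to $\tau_{\mathcal R}=(\Ker(i^*),\Im(i_*),\Ker(i^!))$. If $\tau_{\mathcal R}=(\C,\T,\F)$, then, as $i_*$, $j_*$, $j_!$ are fully faithful, $\Y$ is equivalent to $\Im(i_*)=\T$ and $\X$ is equivalent to the Gabriel quotient $(\mod\A)/\T\simeq\C\cap\F$ (as recorded in the discussion preceding Proposition~\ref{prop.bijection-recollements-TTF} and in the proof of Proposition~\ref{prop.TTF triples generated by fg projectives}). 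By the byproduct noted after Theorem~\ref{prop.direct proof}, $\T\cong\mod{\A/\I_\T}$ is \emph{always} a module category over the small preadditive category $\A/\I_\T$; hence $\mathcal R$ is a recollement by categories of modules over small preadditive categories if and only if $\C\cap\F$ is equivalent to such a category, which by Proposition~\ref{prop.TTF triples generated by fg projectives}, equivalence $(1)\Leftrightarrow(2)$, happens exactly when $\tau_{\mathcal R}$ is generated by finitely generated projective $\A$-modules. Thus the bijection of Proposition~\ref{prop.bijection-recollements-TTF} restricts to one between the sets in $(1)$ and $(2)$.

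For $(2)\Leftrightarrow(3)$, I use the bijection of Theorem~\ref{prop.direct proof} between idempotent ideals of $\A$ and TTF triples in $\mod\A$, $\I\mapsto\T_\I$; by Proposition~\ref{prop.TTF triples generated by fg projectives}, equivalence $(2)\Leftrightarrow(3)$, under this bijection a TTF triple is generated by finitely generated projectives precisely when the corresponding idempotent ideal is the trace in $\A$ of a set of finitely generated projective $\A$-modules, which restricts the bijection of Theorem~\ref{prop.direct proof} to one between $(2)$ and $(3)$. For $(3)\Leftrightarrow(5)$, I invoke Corollary~\ref{coro_ideals_vs_cauchy_with_generator}, after the identification $\proj(\A)\cong\widehat\A_\oplus$ of Corollary~\ref{application_yoneda}$(3)$: the assignment $\X\mapsto\tr_\X(\A)$ is a bijection from the full subcategories of $\widehat\A_\oplus$ closed under coproducts and summands onto the idempotent ideals of $\A$ that are traces of finitely generated projectives. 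For $(4)\Leftrightarrow(5)$, I apply Corollary~\ref{coro_ideals_vs_cauchy_with_generator} with $\A$ replaced by its additive closure $\widehat\A$ — which is a small preadditive category whose Cauchy completion is again (equivalent to) $\widehat\A_\oplus$, since $\widehat\A$ is additive and $\proj(\widehat\A)\cong\widehat{\widehat\A}_\oplus\cong\widehat\A_\oplus$ by Corollaries~\ref{application_yoneda} and \ref{ADD_morita} — obtaining a bijection between the full subcategories of $\widehat\A_\oplus$ closed under coproducts and summands and the idempotent ideals of $\widehat\A$ that are traces of finitely generated projective $\widehat\A$-modules; and since $\widehat\A$ is additive, Proposition~\ref{prop_char_ideals_trace_fgp}, equivalence $(1)\Leftrightarrow(2)$, identifies the latter with the idempotent ideals of $\widehat\A$ generated by a set of idempotent endomorphisms, i.e.\ with the set in $(4)$.

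The routine but not entirely trivial point, and the one I would expect to require the most care, is checking that these bijections are mutually compatible, so that they genuinely assemble into a single coherent correspondence between the five sets; in particular, one should verify that the bijection between the ideals of $\A$ and those of $\widehat\A$ (coming from the Morita equivalence $\mod\A\simeq\mod{\widehat\A}$) carries ``trace of a set of finitely generated projectives'' to ``trace of a set of finitely generated projectives''. This last fact follows from the intrinsic description of traces of projectives in Lemma~\ref{lem.trace-of-projectives is idempotent}$(2)$ (a morphism lies in the trace ideal iff the induced map between representables factors through a finite coproduct of copies of the module) together with the fact that the equivalence $\proj(\A)\simeq\proj(\widehat\A)$ preserves finite coproducts and epi--mono factorizations.
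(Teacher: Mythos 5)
Your proposal is correct and assembles the theorem from the same prior results the paper uses (Propositions~\ref{prop.bijection-recollements-TTF}, \ref{prop.TTF triples generated by fg projectives}, \ref{prop_char_ideals_trace_fgp}, Theorem~\ref{prop.direct proof} and Corollary~\ref{coro_ideals_vs_cauchy_with_generator}), differing only in minor routing: you link $(3)\Leftrightarrow(5)$ directly via Corollary~\ref{coro_ideals_vs_cauchy_with_generator} and then $(5)\Leftrightarrow(4)$ by applying it to $\widehat\A$, whereas the paper transfers TTF triples along the Morita equivalences of Corollary~\ref{ADD_morita} to relate $(3)$, $(4)$ and $(5)$. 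Your explicit attention to the compatibility of the ideal bijection $\A\leftrightarrow\widehat\A$ with the property ``trace of finitely generated projectives'' (via Lemma~\ref{lem.trace-of-projectives is idempotent}(2)) is a point the paper leaves implicit, but the mathematical content is the same.
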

\begin{proof}
The bijection between the families described in parts (1), (2) and (3) is given by Prop.\,\ref{prop.TTF triples generated by fg projectives}.  The bijection between the families in (3) and (4) is given by Prop.\,\ref{prop_char_ideals_trace_fgp}. Furthermore, by Coro.\,\ref{ADD_morita} we know that $\mod \A$, $\mod{\widehat\A}$ and $\mod {\widehat \A_{\oplus}}$ are equivalent categories, which implies that the sets of TTF triples in these categories coincide, and they are in bijection with the set of idempotent ideals of $\widehat{\mathcal{A}}$ (resp., $\widehat\A_{\oplus}$) that are the trace of a set of finitely generated projective modules. By Prop.\,\ref{prop_char_ideals_trace_fgp}, this family is in bijection with the full subcategories of the Cauchy completion $\widehat{\mathcal{A}}_\oplus$ which are closed under coproducts and summands. 
\end{proof}

In the above theorem we have completely characterized those ideals $\I$ that induce a recollement of $\mod \A$ by categories of modules. In the rest of this subsection we are going to describe a standard form for such recollements. Let us start with the following definition:

\begin{definition}
Let $\A$ be a preadditive category and $\E$ a set of idempotent endomorphisms in $\mathcal{A}$. The {\bf corner category of $\mathcal{E}$ in $\mathcal{A}$}, denoted by $\mathcal{C}_\mathcal{E}$ in the sequel, is defined as follows:
\begin{itemize}
\item $\Ob(\mathcal{C}_\mathcal{E}):=\mathcal{E}$;
\item given two morphisms $\epsilon\colon x\to  x$ and $\epsilon'\colon x'\to  x'$ in $\mathcal{E}$, where $x,\, x'\in\Ob(\mathcal{A})$, then $\mathcal{C}_\mathcal{E}(\epsilon,\epsilon')$ is the subgroup of $\mathcal{A}(x,x')$ of those morphisms $\alpha \colon x\to  x'$ in $\mathcal{A}$ that admit a decomposition $\alpha =\epsilon'\circ\beta\circ\epsilon$, for some morphism $\beta\in\mathcal{A}(x,x')$;
\item composition of morphisms is $\mathcal{C}_\mathcal{E}$ is defined as in $\mathcal{A}$.
\end{itemize} 
\end{definition}

We leave to the reader the verification that $\mathcal{C}_\mathcal{E}$ is a well-defined preadditive category. If the ambient category is Cauchy complete, corner categories have a particularly simple description:

\begin{lemma}
Let $\A$ be a Cauchy complete preadditive category and let $\E$ be a set of idempotent endomorphisms in $\mathcal{A}$. For each $\epsilon\colon x\to x$ in $\E$, consider the splitting $x\to x_\epsilon\to x$ of $x$ induced by $\epsilon$. Then, $\C_\E$ is equivalent to the full subcategory of $\A$ having as objects the $x_\epsilon$, with $\epsilon\in\mathcal{E}$. 
\end{lemma}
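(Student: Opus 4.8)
The plan is to build an explicit functor $F\colon\C_\E\to\A$, check that it is fully faithful, and observe that its image is exactly the full subcategory $\A_\E\subseteq\A$ spanned by the objects $x_\epsilon$; this immediately yields the asserted equivalence $\C_\E\simeq\A_\E$. To set things up, recall first that a Cauchy complete preadditive category has split idempotents (it is equivalent to $\widehat\A_\oplus$, in which idempotents split, and ``idempotents split'' is invariant under equivalence). So for each $\epsilon\colon x\to x$ in $\E$ we fix, once and for all, the induced splitting: an object $x_\epsilon$ together with morphisms $\pi_\epsilon\colon x\to x_\epsilon$ and $\iota_\epsilon\colon x_\epsilon\to x$ of $\A$ with $\iota_\epsilon\circ\pi_\epsilon=\epsilon$ and $\pi_\epsilon\circ\iota_\epsilon=\id_{x_\epsilon}$. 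Note that the identity morphism of the object $\epsilon$ in $\C_\E$ is $\epsilon$ itself, and that a morphism $\alpha\in\C_\E(\epsilon,\epsilon')$ is, by definition, a morphism $\alpha\colon x\to x'$ of $\A$ with $\alpha=\epsilon'\circ\alpha\circ\epsilon$; in particular $\epsilon'\circ\alpha=\alpha=\alpha\circ\epsilon$.

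I would define $F$ by $F(\epsilon):=x_\epsilon$ on objects and, for $\alpha\in\C_\E(\epsilon,\epsilon')$, by $F(\alpha):=\pi_{\epsilon'}\circ\alpha\circ\iota_\epsilon\colon x_\epsilon\to x_{\epsilon'}$. All verifications are short diagram chases using only $\iota_\epsilon\pi_\epsilon=\epsilon$, $\pi_\epsilon\iota_\epsilon=\id_{x_\epsilon}$, $\epsilon^2=\epsilon$ and $\alpha=\epsilon'\alpha\epsilon$. For functoriality: $F(\id_\epsilon)=F(\epsilon)=\pi_\epsilon\epsilon\iota_\epsilon=\pi_\epsilon\iota_\epsilon\pi_\epsilon\iota_\epsilon=\id_{x_\epsilon}$; and for composable $\alpha\in\C_\E(\epsilon,\epsilon')$, $\alpha'\in\C_\E(\epsilon',\epsilon'')$ one first checks $\alpha'\circ\alpha\in\C_\E(\epsilon,\epsilon'')$ (since $\epsilon''(\alpha'\alpha)\epsilon=\alpha'\alpha$), and then $F(\alpha')\circ F(\alpha)=\pi_{\epsilon''}\alpha'(\iota_{\epsilon'}\pi_{\epsilon'})\alpha\iota_\epsilon=\pi_{\epsilon''}\alpha'\epsilon'\alpha\iota_\epsilon=\pi_{\epsilon''}(\alpha'\alpha)\iota_\epsilon=F(\alpha'\circ\alpha)$, using $\alpha'\epsilon'=\alpha'$.

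Next I would show $F$ is fully faithful. The key identity is $\iota_{\epsilon'}\circ F(\alpha)\circ\pi_\epsilon=\epsilon'\circ\alpha\circ\epsilon=\alpha$, which gives faithfulness at once. For fullness, given any $g\colon x_\epsilon\to x_{\epsilon'}$ in $\A$, set $\alpha:=\iota_{\epsilon'}\circ g\circ\pi_\epsilon$; then $\epsilon'\alpha\epsilon=\iota_{\epsilon'}(\pi_{\epsilon'}\iota_{\epsilon'})g(\pi_\epsilon\iota_\epsilon)\pi_\epsilon=\alpha$, so $\alpha\in\C_\E(\epsilon,\epsilon')$, and $F(\alpha)=\pi_{\epsilon'}\iota_{\epsilon'}\,g\,\pi_\epsilon\iota_\epsilon=g$. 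Since $F$ is surjective on objects onto $\Ob(\A_\E)=\{x_\epsilon:\epsilon\in\E\}$ by construction, it corestricts to a fully faithful, surjective-on-objects functor $\C_\E\to\A_\E$, hence an equivalence. (In fact it is an isomorphism of categories, with inverse sending $g\colon x_\epsilon\to x_{\epsilon'}$ to $\iota_{\epsilon'}\circ g\circ\pi_\epsilon$, once one has chosen for every object of $\A_\E$ a single representing idempotent of $\E$.)

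There is no real obstacle: the statement is pure bookkeeping with the splitting equations. The one point that deserves a moment's care is that the ``obvious'' inverse assignment $x_\epsilon\mapsto\epsilon$ need not be well defined as a functor on $\A_\E$, since two distinct idempotents of $\E$ (on the same or on different objects) may split through one and the same object of $\A$; this is precisely why it is cleaner to establish the equivalence via fullness, faithfulness and surjectivity on objects of $F$ rather than by exhibiting a strict inverse functor.
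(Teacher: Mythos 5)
Your proof is correct and is essentially the paper's argument run in the opposite direction: the paper defines the quasi-inverse $F\colon \X_\E\to\C_\E$ sending $x_\epsilon\mapsto\epsilon$ and $\alpha\mapsto\iota_{\epsilon'}\circ\alpha\circ\pi_\epsilon$ (written there loosely as $\epsilon'\circ\alpha\circ\epsilon$) and leaves full faithfulness as an exercise, whereas your choice of direction carries out those checks explicitly and avoids the well-definedness subtlety on objects that you flag at the end. The only blemish is the parenthetical claim that your $F$ is an isomorphism of categories: if two distinct idempotents of $\E$ split through the very same object of $\A$, then $F$ is not injective on objects and is only an equivalence --- but nothing in your main argument relies on that aside.
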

\begin{proof}
Let us define a functor $F\colon \X_\E\to \C_\E$, defined on objects by the rule $x_\epsilon\mapsto \epsilon$ and that maps a morphism $\alpha\colon x_{\epsilon}\to x_{\epsilon'}$ in $\X_\E$ to $F(\alpha):=\epsilon'\circ\alpha\circ \epsilon$. This functor is clearly essentially surjective and it is an exercise to verify that it is also fully faithful.
\end{proof}

We have now the following  version of \cite[Thm.\,5.3]{PV}  for preadditive categories:

\begin{corollary} \label{cor.PV generalization}
Let $\mathcal{A}$ be a small preadditive category. Let $\tau=(\C,\T,\F)$ be a TTF associated to a recollement of $\mod \A$ by categories of modules over small preadditive categories. Then, this recollement is equivalent to one of the form
\[
\mathcal{R}:\qquad
\xymatrix{
\mod{({\B}/{\B\E\B})}\ar[rr]|{i_{*}}&&\mod \B\ar@/_-12pt/[ll]|{i^{!}}\ar@/_12pt/[ll]|{i^{*}}\ar[rr]|{j^{*}}&&\mod{\C_\E}\ar@/_-12pt/[ll]|{j_*}\ar@/_12pt/[ll]|{j_!}
}
\]
where $\mathcal{B}$ is a preadditive category Morita equivalent to $\mathcal{A}$ and $\mathcal{E}$ is a set of idempotent endomorphisms in $\mathcal{B}$. 
\end{corollary}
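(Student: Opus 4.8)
The plan is to reduce the statement to results already established---chiefly Theorem~\ref{main_thm_recollement}, Jan's Theorem~\ref{prop.direct proof}, Proposition~\ref{prop_char_ideals_trace_fgp} and Proposition~\ref{prop.TTF triples generated by fg projectives}---after replacing $\A$ by its additive closure $\B:=\widehat\A$, which is Morita equivalent to $\A$ by Corollary~\ref{ADD_morita}. First I would transport $\tau$ along the equivalence $\mod{\B}\xrightarrow{\sim}\mod\A$ (restriction of scalars) to a TTF triple $\tau_\B=(\C_\B,\T_\B,\F_\B)$ in $\mod{\B}$. Since an equivalence sends finitely generated projective generators to finitely generated projective generators, Proposition~\ref{prop.TTF triples generated by fg projectives} shows $\tau_\B$ is again generated by finitely generated projective $\B$-modules; hence the associated idempotent ideal $\I_\B$ (Theorem~\ref{prop.direct proof}) is the trace of a set of finitely generated projective $\B$-modules, and, $\B$ being additive, Proposition~\ref{prop_char_ideals_trace_fgp} produces a set $\E$ of idempotent endomorphisms of objects of $\B$ with $\I_\B=\B\E\B$. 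By Proposition~\ref{prop.ideals generated by idempotents} we moreover have $\B\E\B=\tr_{\P_\E}(\B)$ for $\P_\E=\{\epsilon\B:\epsilon\in\E\}$, so $\tau_\B$ is generated by the (small, projective) modules in $\P_\E$.

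Next I would write down the recollement $\mathcal{R}_\B$ of $\mod{\B}$ attached to $\tau_\B$ and recognise its two outer terms. For the left-hand side, the byproduct recorded after Theorem~\ref{prop.direct proof} gives $\T_\B=\Gen\{H_a/\I_\B(-,a):a\in\Ob(\B)\}\cong\mod{(\B/\B\E\B)}$, and under this equivalence the functor $i_*$ becomes the restriction of scalars along the projection $\pi\colon\B\to\B/\B\E\B$. For the right-hand side $\C_\B\cap\F_\B$: since $\tau_\B$ is generated by the small projectives $\P_\E$, the argument in the proof of the implication (2)$\Rightarrow$(1) of Proposition~\ref{prop.TTF triples generated by fg projectives} shows that $\{j^*(\epsilon\B):\epsilon\in\E\}$ is a set of small projective generators of the (Ab.$5$) Abelian category $\C_\B\cap\F_\B$, so by Remark~\ref{rem_morita_cauchy} the latter is equivalent to $\mod{\G}$, where $\G\subseteq\C_\B\cap\F_\B$ is the full subcategory on those objects.

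The remaining point is the identification $\G\cong\C_\E$, which I regard as the main obstacle. On one hand, a ``sandwiched Yoneda'' computation (exactly as in the proof of Corollary~\ref{application_yoneda}(3)) gives, for $\epsilon\colon x\to x$ and $\epsilon'\colon x'\to x'$ in $\E$,
\[
\hom_\B(\epsilon\B,\epsilon'\B)\;=\;\epsilon'\circ\B(x,x')\circ\epsilon\;=\;\C_\E(\epsilon,\epsilon'),
\]
so $\C_\E$ is the full subcategory of $\mod{\B}$ on $\P_\E$. On the other hand, using the adjunction $j^*\dashv j_*$ together with the fact that $j_*j^*$ is the Gabriel localization functor $\ell$ of the hereditary torsion pair $(\T_\B,\F_\B)$ (so that $N\to\ell(N)$ has kernel $t_\B(N)$ and cokernel in $\T_\B$), and bearing in mind that each $\epsilon\B$ is projective and that $\hom_\B(\epsilon\B,T)=0$ for all $T\in\T_\B$, one obtains isomorphisms
\[
\hom_{\C_\B\cap\F_\B}\!\big(j^*(\epsilon\B),j^*(\epsilon'\B)\big)\;\cong\;\hom_\B\!\big(\epsilon\B,\ell(\epsilon'\B)\big)\;\cong\;\hom_\B\!\big(\epsilon\B,\epsilon'\B\big).
\]
Hence $j^*$ restricts to an equivalence $\P_\E\xrightarrow{\sim}\G$, and combining the two displays gives $\G\cong\C_\E$, whence $\C_\B\cap\F_\B\cong\mod{\C_\E}$, the equivalence being compatible by construction with $j^*$ and the natural functor $\mod{\B}\to\mod{\C_\E}$.

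Finally, substituting these identifications into $\mathcal{R}_\B$ yields precisely the displayed recollement; and since $\mathcal{R}_\B$ has, by construction, the (transport of the) TTF triple $\tau$ as its associated TTF, Proposition~\ref{prop.bijection-recollements-TTF} guarantees that, after transporting $\mathcal{R}_\B$ back along $\mod{\B}\simeq\mod\A$, it is equivalent to $\mathcal{R}$. The only genuinely delicate step is the last part of the previous paragraph, namely checking that the $\hom$-isomorphism there is natural in both variables, so that it really produces an equivalence of categories $\C_\B\cap\F_\B\simeq\mod{\C_\E}$ which intertwines all the functors of the recollement; everything else is bookkeeping with results proved earlier in the paper.
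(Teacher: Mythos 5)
Your proposal is correct and follows essentially the same route as the paper's proof: pass to a Morita-equivalent category where Prop.\,\ref{prop_char_ideals_trace_fgp} realizes the associated idempotent ideal as $\B\E\B$, identify the left term of the recollement with $\mod{(\B/\B\E\B)}$ via the remark after Thm.\,\ref{prop.direct proof}, and identify $\C\cap\F$ with $\mod{\C_\E}$ by exhibiting the images under $j^*$ of the modules $\epsilon\B$ as a generating set of small projectives equivalent to $\C_\E$. The only (harmless) divergence is that the paper takes $\B=\widehat\A_\oplus$ so that $\E$ may be chosen to consist of identities and $\C_\E$ is literally a full subcategory $\X\subseteq\B$ closed under coproducts and summands, whereas you take $\B=\widehat\A$ and instead carry out the corner-category $\hom$-computation and the localization argument explicitly --- details the paper delegates to the proof of Prop.\,\ref{prop.TTF triples generated by fg projectives}.
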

\begin{proof}
Let $\B:=\widehat\A_{\oplus}$ be the Cauchy completion of $\A$. Then, by Prop.\,\ref{prop.TTF triples generated by fg projectives}, we know that there is a full subcategory $\X$ of $\B$, closed under summands and coproducts, such that our recollement is induced by the idempotent ideal $\I:=\B\X\B$. Notice that $\C_\X\cong \X$ is a (Cauchy complete) preadditive category; to conclude the proof it is enough to show that $\C\cap \F\cong \mod \X$. For this, it is enough to show that the full subcategory of finitely generated projective objects in $\C\cap \F$ is generating and equivalent to $\X$. We know that $\X$ is equivalent to the full subcategory of $\mod \A$ spanned by $\{H_x:x\in \X\}$. Now, one can prove exactly as in the implication ``(2)$\Rightarrow$(1)'' of Prop.\,\ref{prop.TTF triples generated by fg projectives}, that the full subcategory $\{c(H_x):x\in \X\}$ of $\C\cap \F$ is again equivalent to $\X$, it generates $\C\cap \F$, and it consists of finitely generated projectives. 
\end{proof}

It is natural to ask what happens when the side categories in the recollement of the above corollary are actually module categories over unital rings:

\begin{corollary} \label{cor.sides are unital modcats}
Let $\mathcal{A}$ be a small preadditive category whose module category admits a recollement 
\[
\mathcal{R}:\qquad
\xymatrix{
\mod{B}\ar[rr]|{i_{*}}&&\mod \A\ar@/_-12pt/[ll]|{i^{!}}\ar@/_12pt/[ll]|{i^{*}}\ar[rr]|{j^{*}}&&\mod{C}\ar@/_-12pt/[ll]|{j_*}\ar@/_12pt/[ll]|{j_!}
}
\]
where $B$ and $C$ are  associative and unital rings. Then there is an associative unital ring $A$ such that $\mod \A$ is equivalent to $\mod A$. Hence, any such recollement is equivalent to a recollement as the one described in \cite[Thm.\,5.3]{PV}.
\end{corollary}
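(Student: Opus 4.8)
The plan is to show that, under the hypothesis, the Cauchy completion $\widehat\A_\oplus\cong\proj(\A)$ admits a $\oplus$-generator $P$; then, by Rem.\,\ref{rem_morita_cauchy}, $\A$ is Morita equivalent to the unital ring $A=\End_\A(P)$, so $\mod\A\simeq\mod A$, and transporting the given recollement along this equivalence realizes it as a recollement of $\mod A$ by $\mod B$ and $\mod C$, i.e.\ one of the form of \cite[Thm.\,5.3]{PV}. To set things up, by Prop.\,\ref{prop.bijection-recollements-TTF} the recollement corresponds to a TTF triple $(\C,\T,\F)$ with associated idempotent ideal $\I:=\I_\T$, and the hypothesis translates into $\T\simeq\mod B$ (equivalently $\mod{\A/\I}\simeq\mod B$, see the comment after Thm.\,\ref{prop.direct proof}) and $\C\cap\F\simeq\mod C$.

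First I would extract two finiteness inputs. Since $\C\cap\F\simeq\mod C$ with $C$ unital, $\C\cap\F$ has a single small projective generator; applying the fully faithful left adjoint $j_!$ of the recollement, which preserves finitely generated projectives (exactly as observed in the proof of Prop.\,\ref{prop.TTF triples generated by fg projectives}), I obtain a single finitely generated projective $\A$-module $S$ such that $\{S\}\subseteq\C\cap\proj(\A)$ generates $\tau$; then, running the argument of the implication $(2)\Rightarrow(3)$ of that proposition with $\P=\{S\}$, one gets $\I=\tr_S(\A)$, hence $\I(-,a)=\tr_S(H_a)\in\Gen(S)$ for every $a\in\Ob(\A)$. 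On the other side, since $\mod{\A/\I}\simeq\mod B$ with $B$ unital, the category $\widehat{(\A/\I)}_\oplus$ of finitely generated projective $\A/\I$-modules has a $\oplus$-generator; as this category is $\add$-generated by the representables $(\pi_\I)^*(H_a)=H_a/\I(-,a)$ (see \eqref{extending_representables_eq}), there are finitely many $a_1,\dots,a_n\in\Ob(\A)$ such that every $H_a/\I(-,a)$ is a direct summand, in $\T$, of a finite coproduct of copies of $\bigoplus_{i=1}^n H_{a_i}/\I(-,a_i)$.

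The heart of the argument is then to prove that $P:=S\oplus\bigoplus_{i=1}^n H_{a_i}$ is a $\oplus$-generator of $\proj(\A)$; since $\proj(\A)=\add\{H_a:a\in\Ob(\A)\}$ (Coro.\,\ref{application_yoneda}), it suffices to show $H_a\in\add(P)$ for each $a$. Fix $a$ and consider the canonical sequence $0\to\I(-,a)\to H_a\xrightarrow{\pi}H_a/\I(-,a)\to 0$. By the previous paragraph there is a split monomorphism $H_a/\I(-,a)\hookrightarrow Q/Q\I$ in $\T$ with retraction $\rho$, where $Q$ is a finite coproduct of copies of $\bigoplus_{i=1}^n H_{a_i}$ (so $Q\in\add(P)$) and $Q/Q\I=(\pi_\I)^*(Q)$. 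Lifting the composite $H_a\to H_a/\I(-,a)\hookrightarrow Q/Q\I$ along the projection $Q\twoheadrightarrow Q/Q\I$ using projectivity of $H_a$, and then lifting the resulting map $Q\to H_a/\I(-,a)$ along $\pi$ using projectivity of $Q$, a short ping-pong between the two projectives yields morphisms $f\colon H_a\to Q$ and $g\colon Q\to H_a$ with $\pi\circ g\circ f=\pi$; equivalently $gf-\id_{H_a}$ has image contained in $\I(-,a)$. Since $\I(-,a)\in\Gen(S)$ and $H_a$ is finitely generated projective, the induced map $H_a\to\I(-,a)$ factors through some finite power $S^k$, so $gf-\id_{H_a}$ factors through $S^k$. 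Rewriting $\id_{H_a}=gf-(gf-\id_{H_a})$ as a single composite $H_a\to Q\oplus S^k\to H_a$ shows that $H_a$ is a direct summand of $Q\oplus S^k\in\add(P)$, hence $H_a\in\add(P)$.

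Granting the claim, $\widehat\A_\oplus\cong\proj(\A)=\add(P)$, so $\mod\A\simeq\mod A$ with $A$ unital, and the last sentence of the statement follows as explained above. \textbf{The main obstacle} is the third paragraph: one must glue the single projective generator $S$ that controls the torsion(free) side $\C$ together with the finitely many representables that control $\T\simeq\mod{\A/\I}$ into one factorization of $\id_{H_a}$ through an object of $\add(P)$, which forces the representable $H_a$ into $\add(P)$; everything else is bookkeeping with results already established.
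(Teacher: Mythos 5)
Your proof is correct and follows essentially the same strategy as the paper's: both glue a progenerator controlling the $\C$-side (your $S$, the paper's $c$) with one controlling $\T\simeq\mod{\A/\I}$ (your $\bigoplus_i H_{a_i}$, the paper's $b$) into a single $\oplus$-generator, via the decomposition of the identity of each representable into a part lifted from the quotient $\A/\I$ plus a correction term landing in $\I$ and hence factoring through a finite power of $S$. The paper phrases this inside the Cauchy completion while you work with $\proj(\A)$ and the trace ideal, but the two formulations coincide under Coro.\,\ref{application_yoneda}.
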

\begin{proof}
Up to equivalence, we can suppose that $\A$ is Cauchy complete and we can let $\B\cong \proj(B)$ and $\C\cong \proj(C)$ be the Cauchy completions of $B$ and $C$, respectively. Now, by Rem.\,\ref{rem_morita_cauchy}, we have $\oplus$-generators $b$ and $c$ in $\B$ and $\C$, respectively. In fact, we can identify $\C$ with a full subcategory (closed under coproducts and summands) of $\A$ and, up to this identification, $\B\cong \A/\A\C\A$ (where $\A\C\A$ is the ideal of $\A$ generated by the identities of objects in $\C$). Identifying $\B$ with $\A/\A\C\A$ we can consider both $b$ and $c$ as objects in $\A$. To conclude, it is enough to verify that $a:=b\oplus c$ is an $\oplus$-generator in $\A$, so that $\mod \A\cong \mod {\A(a,a)}$. For this, let $x\in \Ob(\A)=\Ob(\A/\A\C\A)$, then $x$, when viewed as an object in $\A/\A\C\A$, is a summand of a finite coproduct of copies of $b$, that is, there is $n\in\N$ such that
\[
\xymatrix@C=10pt{
x\ar[rr]^(.4){\id_x}&&x& =& x\ar[rr]^{}&&b^n\ar[rr]^{}&&x,&&\text{in $\A/\A\C\A$.}
}
\]
This means exactly that there exists $c'\in\Ob(\C)$ such that
\[
\xymatrix@R=2pt@C=10pt{
&&&&&&b^n\ar@{}[dd]|+\ar[drr]^{}\\
x\ar[rr]^(.4){\id_x}&&x& =& x\ar[drr]\ar[rru]^{}&&&&x,&&\text{in $\A$.}\\
&&&&&&c'\ar[urr]
}
\]
Now, since $c$ is a $\oplus$-generator in $\C$, $c'$ is a summand of $c^m$ for some $m\in \N$. Hence, the identity of $x$ factors through $(b\oplus c)^{\max\{m,n\}}$, so that $b\oplus c$ is a $\oplus$-generator for $\A$.
\end{proof}

\subsection{Centrally splitting TTF's}\label{subs_centrally}\label{coro_central_id}

In the following proposition we show that the TTF triples that arise from a central idempotent  are exactly the split ones. This proves Corollary D in the Introduction. For a more general version of the following result, in the setting of idempotent complete additive categories, we refer to \cite[Prop.\,1.7.4]{N}. We include here a complete proof, in our particular setting, since it easily follows as a consequence of the results of the previous subsections.

\begin{proposition}
Consider a TTF triple $(\C,\T,\F)$ in $\mod \A$, denote by $\I$ the associated idempotent ideal, and denote by $c\colon \mod \A\to \C$ and $t\colon \mod \A\to \T$ the associated torsion radicals.
 The following are equivalent 
\begin{enumerate}[\rm (1)]
\item for any module $M$, there is a decomposition $M=c(M)\oplus t(M)$;
\item there is an idempotent element $\epsilon\in Z(\A)$ such that $\I=\I_\epsilon$;
\item $\C=\F$.
\end{enumerate}
In this case we have
 \[
\C=\{M: M(\epsilon_a)\text{ is an iso, for all }a\in\Ob(\A)\}\quad\text{and}
\]
\[\T=\{M: M(\epsilon_a)=0\text{, for all }a\in\Ob(\A)\}.
\]
\end{proposition}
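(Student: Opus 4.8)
The plan is to establish the cycle of implications $(1)\Rightarrow(2)\Rightarrow(3)\Rightarrow(1)$, and then to read off the displayed description of $\C$ and $\T$ from Corollary~\ref{cor.description-TTF}. Write $\I:=\I_\T$ for the idempotent ideal attached to the TTF triple $(\C,\T,\F)$ by Theorem~\ref{prop.direct proof}. I will use repeatedly the following facts from the previous subsections: since $(\C,\T)$ is a torsion pair, $\C$ is simultaneously a torsion class and the torsionfree class of $(\C,\T)$, hence closed under both subobjects and quotients; $\T$ is closed under quotients and $\F$ under subobjects; and $\C\cap\T=0=\T\cap\F$, since any object in such an intersection has vanishing identity morphism. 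Moreover, by Theorem~\ref{prop.direct proof}, Lemma~\ref{prop.torsion ideal is two-sided} and the remarks following Theorem~\ref{prop.direct proof} one has $\T=\{M:M\I=0\}$, $\C=\{M:M\I=M\}$, and $c(M)=M\I$ for every $M$ (the last identity because $M\I\in\C$ while $M/M\I\in\T$, so $0\to M\I\to M\to M/M\I\to 0$ is the $(\C,\T)$-torsion sequence of $M$); $t(M)$ is by definition the largest submodule of $M$ lying in $\T$.

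For $(1)\Rightarrow(2)$ I would evaluate the splitting on representables, getting $H_b=c(H_b)\oplus t(H_b)$ for every $b\in\Ob(\A)$. By Lemma~\ref{torsion_ideal_lem} applied to $(\C,\T)$ and to $(\T,\F)$, the assignments $b\mapsto c(H_b)=\I(-,b)$ and $b\mapsto t(H_b)$ are sub-$\A$-bimodules of the regular bimodule $\A(-,-)$, and in each component they are complementary (their intersection sits inside $\C\cap\T=0$). Hence $\A(-,-)=\I\oplus t(\A)$ as $\A$-bimodules, that is, $\I$ is a direct summand of $\A$; Proposition~\ref{prop.bijection-central-idempotents} then provides a (unique) idempotent $\epsilon\in Z(\A)$ with $\I=\I_\epsilon$, which is $(2)$.

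For $(2)\Rightarrow(3)$, fix the decomposition $\A=\I_\epsilon\oplus\I'_\epsilon$ from Proposition~\ref{prop.bijection-central-idempotents}: by Lemma~\ref{lemma_direct_summands}(4) every module splits as $M=M\I_\epsilon\oplus M\I'_\epsilon$, and (arguing exactly as in the proof of Corollary~\ref{cor.description-TTF}, using $\I'_\epsilon\cdot\I_\epsilon=0$ from Lemma~\ref{lemma_direct_summands}(2)) every morphism of $\I_\epsilon$ annihilates $M\I'_\epsilon$, so $M\I'_\epsilon\in\T$. If $M\in\F$, then $M\I'_\epsilon$ is a submodule of an object of $\F$, hence in $\F$, so $M\I'_\epsilon\in\F\cap\T=0$ and $M=M\I_\epsilon\in\C$; thus $\F\subseteq\C$. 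The inclusion $\C\subseteq\F$ holds for any TTF triple, since a morphism $f\colon T\to C$ with $T\in\T$ and $C\in\C$ has $\Im f\in\C\cap\T=0$; hence $\C=\F$. For $(3)\Rightarrow(1)$, take any $M$ and put $Q:=M/(c(M)+t(M))$: as a quotient of $(1:c)(M)=M/c(M)\in\T$ it lies in $\T$, and as a quotient of $(1:t)(M)=M/t(M)\in\F=\C$, with $\C$ closed under quotients, it also lies in $\C$, so $Q\in\C\cap\T=0$; therefore $c(M)+t(M)=M$, and since $c(M)\cap t(M)\subseteq\C\cap\T=0$ we obtain $M=c(M)\oplus t(M)$. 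Under these equivalent conditions, finally, $\C=\{M:M\I_\epsilon=M\}=\{M:M(\epsilon_a)\text{ is an isomorphism for all }a\in\Ob(\A)\}$ and $\T=\{M:M\I_\epsilon=0\}=\{M:M(\epsilon_a)=0\text{ for all }a\in\Ob(\A)\}$ by Corollary~\ref{cor.description-TTF} and Remark~\ref{rem.clarification}, which is the last assertion.

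The only places requiring genuine care are the two preliminary identifications — that $c(M)=M\I$ for \emph{all} $M$ (not just for representables) and that the complementary summand $M\I'_\epsilon$ actually lies in $\T$ rather than satisfying only some weaker annihilation condition — together with the routine but necessary check that the componentwise splitting $\A(-,b)=\I(-,b)\oplus t(H_b)$ is compatible with the bimodule structure of $\A(-,-)$. Once these are settled, each of the three implications reduces to bookkeeping with the closure properties of $\C$, $\T$ and $\F$ recalled above, so I do not expect a serious obstacle beyond these bimodule-level verifications.
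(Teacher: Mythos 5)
Your overall strategy (the cycle $(1)\Rightarrow(2)\Rightarrow(3)\Rightarrow(1)$, with Corollary~\ref{cor.description-TTF} and Remark~\ref{rem.clarification} giving the final descriptions of $\C$ and $\T$) is viable, and your steps $(1)\Rightarrow(2)$ and $(3)\Rightarrow(1)$ essentially coincide with the paper's arguments; the paper instead proves the two equivalences $(1)\Leftrightarrow(2)$ and $(1)\Leftrightarrow(3)$ separately. However, there is a genuine error in your preliminaries that becomes load-bearing in $(2)\Rightarrow(3)$. You assert that, since $(\C,\T)$ is a torsion pair, ``$\C$ is simultaneously a torsion class and the torsionfree class of $(\C,\T)$, hence closed under both subobjects and quotients.'' This is a confusion: the torsionfree class of the pair $(\C,\T)$ is $\T$, not $\C$; in a TTF triple only the middle class $\T$ is both a torsion and a torsionfree class, and $\C$ is in general \emph{not} closed under subobjects. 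Consequently your argument that ``the inclusion $\C\subseteq\F$ holds for any TTF triple'' is invalid ($\Im f$ is a quotient of $T\in\T$, hence in $\T$, and a subobject of $C\in\C$, but the latter does not place it in $\C$), and the claim itself is false. Counterexample: take $\A=R$ the ring of upper triangular $2\times 2$ matrices over a field and the idempotent ideal $\I=Re_{11}R=e_{11}R$. Then $e_{11}R\in\C$ since $e_{11}R\cdot\I=e_{11}R$, but $e_{12}$ spans a submodule of $e_{11}R$ isomorphic to the simple module $S$ with $Se_{11}=0$, which lies in $\T=\{M:M\I=0\}$; hence $\Hom_R(S,e_{11}R)\neq 0$ and $e_{11}R\notin\F$, so $\C\not\subseteq\F$.

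The gap is local and repairable, because you only need $\C\subseteq\F$ under hypothesis $(2)$, where it does hold: writing $\A=\I_\epsilon\oplus\I'_\epsilon$ as in Proposition~\ref{prop.bijection-central-idempotents}, any $T\in\T$ satisfies $T=T\I'_\epsilon$ (since $T\I_\epsilon=0$ and $T=T\I_\epsilon\oplus T\I'_\epsilon$), so for $f\colon T\to C$ with $C\in\C$ one gets $\Im(f)=f(T)\I'_\epsilon\subseteq C\I'_\epsilon=C\I_\epsilon\I'_\epsilon=0$ by Lemma~\ref{lemma_direct_summands}(2). With that substitution the implication $(2)\Rightarrow(3)$ goes through. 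Two further small points: in $(3)\Rightarrow(1)$ your conclusion $c(M)\cap t(M)\subseteq\C\cap\T=0$ is correct, but it should be justified by the fact that under $(3)$ one has $\C=\F$, which \emph{is} closed under subobjects (this is exactly what the paper says), not by the false general closure property; and in $(1)\Rightarrow(2)$ the directness of the sum $c(H_b)\oplus t(H_b)$ is already part of hypothesis $(1)$, so the parenthetical appeal to $\C\cap\T$ is unnecessary there.
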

\begin{proof}
(3)$\Rightarrow$(1). Notice first that $M=c(M)+t(M)$, in fact, the inclusion $\F\subseteq \C$ is equivalent to say that 
$c(M/t(M))=M/t(M)$. Hence, 
\[
\frac{M}{t(M)}=c\left(\frac{M}{t(M)}\right)=\frac{M}{t(M)}\cdot\I=\frac{M\I+t(M)}{t(M)}=\frac{c(M)+t(M)}{t(M)}.
\]
 Furthermore, $\T$ is always hereditary, while $\C=\F$ is hereditary because $\F$ is closed under taking submodules, hence $c(M)\cap t(M)\in \T\cap \C=\T\cap \F=0$.

\noindent\smallskip
(1)$\Rightarrow$(3). Let $M\in\mod \A$; by the decomposition $M\cong t(M)\oplus c(M)$ we can see that $M\in \F$ if and only if $t(M)=0$ if and only if $M\cong c(M)$, if and only if $M\in\C$.

\noindent\smallskip
(1)$\Rightarrow$(2).  We have a decomposition $\A(-,a)=c(\A(-,a))\oplus t(\A(-,a))$, natural in $a$, for all $a\in\Ob(\A)$. It then follows a decomposition $\A=c(\A)\oplus t(\A)$ as ideals of $\A$. By Prop.\,\ref{prop.bijection-central-idempotents}, there exists a unique $\epsilon =\epsilon^2\in Z(\A)$ such that $\I=c(\A)=\I_\epsilon$.

\noindent\smallskip
(2)$\Rightarrow$(1). It is a consequence of Thm.\,\ref{prop.direct proof}, Prop.\,\ref{prop.bijection-central-idempotents} and Lem.\,\ref{lemma_direct_summands}.

\noindent\smallskip
For the last  statement apply Coro.\,\ref{cor.description-TTF} and Rem.\,\ref{rem.clarification}.
\end{proof}

\medskip
Manuel Saor\'in -- \texttt{msaorin@um.es}\\
{Departamento de Matem\'{a}ticas,
Universidad de Murcia,  Aptdo.\,4021,
30100 Espinardo, Murcia,
SPAIN}

\medskip
Carlos E. Parra -- \texttt{carlos.parra@uach.cl}\\
{Instituto de Ciencias Físicas y Matemáticas, Edificio Emilio Pugin, Campus Isla Teja, Universidad Austral de Chile, 5090000 Valdivia, CHILE}

\medskip
Simone Virili -- \texttt{s.virili@um.es} or \texttt{virili.simone@gmail.com}\\
{Departamento de Matem\'{a}ticas,
Universidad de Murcia,  Aptdo.\,4021,
30100 Espinardo, Murcia,
SPAIN}

\end{document}